\documentclass[12pt,a4paper]{article}
\usepackage[utf8]{inputenc}
\usepackage[T1]{fontenc}
\usepackage{amsmath, amssymb, amsthm}
\usepackage{graphicx}
\usepackage{verbatim}
\usepackage{hyperref}
\usepackage[margin=2.5cm]{geometry}
\usepackage{tikz}

\newtheorem{definition}{Definition}[section]
\newtheorem{theorem}{Theorem}[section]
\newtheorem{lemma}{Lemma}[section]
\newtheorem{corollary}{Corollary}[section]
\newtheorem{remark}{Remark}[section]

\newtheorem{proposition}{Proposition}[section]

\title{From Tangency to Fractals: Quadratic Dynamics in Nested Convex Geometry}
\author{
	M. El Morsalani\\
	QWave Consult,  Germany\\
	Associated Member, ISTM Laboratory\\
	Chouaib Doukkali University, Morocco\\
	\href{mailto:Mohamed.elmorsalani@qwave-consult.eu}{Mohamed.elmorsalani@qwave-consult.eu}
		\and
	M. Barkatou\\
	ISTM Laboratory\\
	Chouaib Doukkali University, Morocco\\
	\href{mailto:barkatou.m@ucd.ac.ma}{barkatou.m@ucd.ac.ma}
}

\begin{document}

\maketitle

\begin{abstract}
	We study the dynamics generated by return maps associated with nested convex
	bodies and growing domains satisfying the geometric normal property in the plane.
	These maps are defined by transporting boundary points along normal directions to
	the surrounding domain and projecting them back onto the boundary of a subsequent
	convex set.
	
	We introduce a tangency condition between consecutive convex sets and show
	that it cancels the linear term in the local expansion of the transition operators. As
	a result, the dynamics near tangency points is governed by a quadratic normal form
	\[
	G_k(s) = \alpha_k s^2 + o(s^2),
	\]
	with an explicit coefficient depending on curvature and second-order geometric data.
	This quadratic tangency law constitutes the central mechanism of the system.
	
	We prove that this nonlinear contraction leads to super-exponential convergence
	toward the tangency set. In logarithmic coordinates, the dynamics becomes approximately
	affine, which allows for an interpretation in terms of iterated function
	systems (IFS) and explains the emergence of fractal limit sets.
	
	The theory is illustrated by several geometric configurations. Ford circles reveal
	a connection with continued fractions, nested ellipses yield Cantor-type limit sets,
	and configurations such as stadia and rounded triangles demonstrate the coexistence
	of linear and quadratic regimes. In purely quadratic settings with $m$ independent
	branches, the limit set has similarity dimension \(\log_2 m\) in logarithmic coordinates,
	while its Hausdorff dimension satisfies \(\dim_H \Lambda \leq 1\).
	
	From a broader perspective, the combination of symbolic branching and nonlinear
	contraction suggests potential connections with  geometry, in particular in hybrid
	classical–quantum information processing frameworks.
\end{abstract}
\noindent\textbf{Keywords:}
convex geometry, return maps, tangency dynamics, quadratic maps,
super-exponential convergence, iterated function systems,
fractal limit sets, Hausdorff dimension, conformal parametrization,
complex dynamics

\noindent\textbf{2020 Mathematics Subject Classification:}
Primary 37C70; Secondary 37C45, 28A80, 30C15, 52A20.
\tableofcontents

\section{Introduction}

The geometry of domains satisfying the geometric normal property generates a natural dynamical system on the boundary of convex sets. This structure was introduced in~\cite{barkatou2002}, where a class of domains denoted by \(\mathcal{O}_C\) was defined. In that framework, a convex body \(C\) is embedded in a domain \(\Omega\), and the geometry of \(\Omega\) induces a return map acting on the boundary \(\partial C\).

More precisely, two geometric maps are involved. The first map sends a point of \(\partial C\) along the outward normal until it reaches \(\partial \Omega\), while the second map projects points of \(\partial \Omega\) back to \(\partial C\) along the inward normal direction. Their composition defines the return map
\begin{equation}
F_C = \pi_C \circ \Phi_C,
\label{eq:return-map}
\end{equation}
which generates a discrete dynamical system on \(\partial C\).

A detailed analysis of this return map was carried out in~\cite{return2026}, where it was shown that, to leading order, the dynamics behaves like an adaptive gradient descent for the thickness function
\begin{equation}
d_C : \partial C \to \mathbb{R}_+,
\label{eq:thickness}
\end{equation}
namely
\begin{equation}
F_C(c) = c - 2 d_C(c)\,\nabla_{\partial C} d_C(c) + O\!\left(d_C(c)^2\right).
\label{eq:gradient-structure}
\end{equation}

\medskip

\noindent
\textbf{New setting and main phenomenon.}
In the present work, we consider a non-autonomous geometric configuration in which both the convex sets and the surrounding domains evolve. More precisely, we study nested sequences
\begin{equation}
C_0 \supset C_1 \supset C_2 \supset \dots,
\label{eq:nested-C}
\end{equation}
together with increasing domains
\begin{equation}
\Omega_0 \subset \Omega_1 \subset \Omega_2 \subset \dots,
\label{eq:nested-Omega}
\end{equation}
such that each \(\Omega_k\) satisfies the \(C_k\)-geometric normal property.

The key new ingredient is a \emph{tangency condition} between consecutive convex sets: for each \(k\), the boundaries \(\partial C_k\) and \(\partial C_{k+1}\) are tangent at least at one point. This geometric constraint has a fundamental dynamical consequence: it cancels the linear term in the expansion of the transition operators.

\medskip

\noindent
\textbf{Main result.}
As a result of this cancellation, the dynamics near tangency points is governed by a \emph{quadratic normal form}. More precisely, the transition operators satisfy
\[
G_k(s) = \alpha_k s^2 + o(s^2),
\]
where the coefficient \(\alpha_k\) is explicitly determined by second-order geometric data (curvatures, thickness, and the geometry of the surrounding domains). This quadratic tangency law constitutes the central result of the paper.

\medskip

\noindent
\textbf{Dynamical consequences.}
The quadratic structure of the dynamics has several important implications:
\begin{itemize}
	\item the cancellation of the linear term induces a \emph{nonlinear contraction mechanism};
	\item trajectories converge toward the tangency set at a \emph{super-exponential rate};
	\item in logarithmic coordinates, the dynamics becomes approximately affine, leading naturally to an \emph{iterated function system (IFS)} description;
	\item the asymptotic dynamics gives rise to \emph{fractal limit sets}, whose dimension depends on the number of tangency branches.
	\item the effective dimension of the limit structure is governed by the number of tangency branches through the associated symbolic dynamics.
\end{itemize}

\medskip

\noindent
\textbf{Methodology.}
A key methodological contribution of this work is the use of conformal parametrizations and complex variables adapted to convex boundaries. By mapping each convex domain to the unit disk, we obtain explicit representations of curvature, normal fields, and angular variables. In particular, Proposition~4.1 shows that the composition of conformal maps tangent to the identity yields a quadratic term governed by the difference of their second derivatives. This provides a conceptual explanation of the quadratic tangency law and connects the present framework with holomorphic dynamics.

\medskip

\noindent
\textbf{Structure and examples.}
To isolate the dynamical core of the phenomenon, Section~3 introduces a solvable quadratic model exhibiting double-exponential convergence. The general geometric framework is then developed using both differential-geometric and complex-analytic tools.

Several examples illustrate the theory:
\begin{itemize}
	\item Ford circles reveal a connection with continued fractions and the Gauss map;
	\item nested ellipses yield a two-branch quadratic dynamics and Cantor-type limit sets;
	\item stadium configurations exhibit a mixed regime combining quadratic and linear contraction;
	\item rounded triangles provide a purely quadratic system with three branches, leading to fractal sets with similarity dimension \(\log_2 3\) in logarithmic coordinates.
\end{itemize}

\medskip

\noindent
\paragraph{Perspective.}
From a broader viewpoint, the nested tangency configuration provides a purely geometric
mechanism for generating nonlinear dynamical systems with strong contraction and fractal
limit structures. The interplay between curvature, quadratic dynamics, and symbolic branching
suggests a unified framework connecting convex geometry, dynamical systems, and complex analysis.

Beyond its geometric interpretation, this mechanism also exhibits features reminiscent of
information-processing systems. In particular, the combination of symbolic branching (arising
from multiple tangency points) and super-exponential contraction creates a separation between
the combinatorial structure of trajectories and their geometric realization. This separation
naturally suggests a potential use of the dynamics as an amplification mechanism, where a small
amount of initial randomness—possibly of quantum origin—could be expanded into a complex symbolic
trajectory.

While such applications remain speculative at this stage, they indicate that the geometric
framework developed in this work may provide a bridge toward hybrid classical–quantum schemes,
where nonlinear dynamics is used to amplify and structure quantum-generated randomness.

\section{Geometry of the Class \(\mathcal{O}_{C}\) in the Plane}

We recall the geometric framework introduced in~\cite{barkatou2002}
and further developed in~\cite{return2026}. This structure provides
the basis for the dynamical construction studied in the present work.
In the plane, the geometry simplifies considerably.

\subsection{Convex curves and normal fields}

Let \(C\subset \mathbb{R}^{2}\) be a compact convex set with nonempty interior
and boundary of class \(C^{3}\).
For each point \(c\in \partial C\), we denote by
\[
\nu_{C}(c)\in \mathbb{S}^{1}
\]
the outward unit normal vector to \(\partial C\).
The curvature \(\kappa(c)\) is strictly positive and varies smoothly along \(\partial C\).

\subsection{The geometric normal property}

\begin{definition}[Class \(\mathcal{O}_{C}\) in the plane]
	Let \(C\subset \mathbb{R}^{2}\) be a convex body.
	An open set \(\Omega\) containing \(C\) belongs to the class \(\mathcal{O}_{C}\) if:
	\begin{enumerate}
		\item \(C\subset \Omega\),
		\item \(\partial \Omega\) is \(C^{1}\) outside \(C\),
		\item for any \(c\in \partial C\) there exists an outward normal ray \(\Delta_{c}\)
		such that \(\Delta_{c}\cap \Omega\) is connected,
		\item for almost every \(x\in \partial \Omega\), the inward normal ray starting at \(x\)
		intersects \(C\).
	\end{enumerate}
	This property is called the \(C\)-geometric normal property.
\end{definition}

\subsection{Thickness function}

For \(\Omega \in \mathcal{O}_{C}\), we define the thickness function
\begin{equation}
d_{C}:\partial C\to \mathbb{R}_{+},
\label{eq:thickness-def}
\end{equation}
by
\begin{equation}
d_{C}(c) = \sup \{r\geq 0 : c + r\nu_{C}(c)\in \Omega\}.
\label{eq:thickness-formula}
\end{equation}

Geometrically, \(d_{C}(c)\) measures the maximal distance that one can travel
from \(c\) along the outward normal while remaining inside \(\Omega\).
The function \(d_{C}\) is smooth near points where the normal ray meets
\(\partial \Omega\) transversally.

\subsection{Radial and reciprocal maps}

The thickness function naturally induces two geometric maps.

\paragraph{Radial map}
\begin{equation}
\Phi_{C}:\partial C\to \partial \Omega,
\label{eq:radial-map}
\end{equation}
defined by
\begin{equation}
\Phi_{C}(c) = c + d_{C}(c)\nu_{C}(c).
\label{eq:radial-map-formula}
\end{equation}

\paragraph{Reciprocal map}
\begin{equation}
\pi_{C}:\partial \Omega \setminus C\to \partial C,
\label{eq:reciprocal-map}
\end{equation}
which sends a point of \(\partial \Omega\) to the first intersection of the inward
normal ray with \(\partial C\).

\subsection{Return map}

The composition of these two maps defines the return map
\begin{equation}
F_{C} = \pi_{C}\circ \Phi_{C}:\partial C\to \partial C.
\label{eq:return-map-geom}
\end{equation}

This map generates a discrete dynamical system on the boundary \(\partial C\).

\begin{theorem}[Expansion of the return map]
	\label{thm:return-map-expansion}
	The return map admits the expansion
	\begin{equation}
	F_{C}(c) = c - 2d_{C}(c)\nabla_{\partial C}d_{C}(c) + O\!\left(d_{C}(c)^{2}\right),
	\label{eq:return-map-expansion}
	\end{equation}
	where \(\nabla_{\partial C}\) denotes the gradient along the boundary.
\end{theorem}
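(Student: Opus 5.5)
We work in a local chart: fix $c_0\in\partial C$, parametrize $\partial C$ by arclength $s\mapsto \gamma(s)$ with unit tangent $T(s)$, outward normal $\nu(s)$, and Frenet relations $T'=-\kappa\nu$, $\nu'=\kappa T$. Write $d(s)=d_C(\gamma(s))$, and regard $d$ as the small parameter, with $d'$ and $d''$ bounded on compact pieces of $\partial C$ where the ray meets $\partial\Omega$ transversally.

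\textbf{Step 1: the tangent to $\partial\Omega$.} Differentiating the radial map gives
\[
\Phi(s)=\gamma(s)+d(s)\nu(s),\qquad \Phi'(s)=(1+\kappa d)\,T+d'\,\nu .
\]
Hence the unit normal to $\partial\Omega$ at $\Phi(s)$ is proportional to
\[
N(s)\propto (1+\kappa d)\,\nu - d'\,T .
\]
Its angle with $\nu(s)$ is $\theta\approx d'/(1+\kappa d)$, which gives $\theta=d'+O(d\,d')$.

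\textbf{Step 2: the reciprocal projection.} From $x=\Phi(s)$ we follow the inward normal $-N$ until it meets $\partial C$ at $\gamma(s+\sigma)$. We look for $\sigma$ and $t$ solving
\[
\gamma(s)+d\nu - t\,\frac{N}{|N|}=\gamma(s+\sigma).
\]
Expand the right-hand side to second order, $\gamma(s+\sigma)=\gamma+\sigma T-\tfrac12\kappa\sigma^2\nu+O(\sigma^3)$, and project onto $T$ and $\nu$.
\begin{itemize}
\item The $\nu$-component gives $t=d+O(\sigma^2)+O(d\theta^2)$.
\item The $T$-component gives $\sigma=t\sin\theta+O(\sigma^2)$, that is, $\sigma=d\,d'+O(d^2)$ to leading order.
\end{itemize}
The geometry suggests that the image point lies on the side towards which the normal of $\partial\Omega$ tilts. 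The authors' factor $2$, and their gradient sign, must be recovered by keeping track of these orientations.

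\textbf{Step 3: the factor $2$.} A direct perpendicular foot computation gives displacement $-d\,d'\,T$, not $-2d\,d'\,T$. I would therefore recheck two things.
\begin{itemize}
\item The convention for $\pi_C$ in \cite{return2026}. For example, it may be reflection-type or use the tilt doubled, in the sense that the ray from $\gamma(s)$ to $x$ and the ray from $x$ back make angle $\theta$ each.
\item Whether the first-order correction of $\theta$ due to $d'$ at the landing point contributes an equal second copy.
\end{itemize}
Once the convention is fixed, I would identify $\sigma\,T$ with $-2d\,\nabla_{\partial C}d$, using $\nabla_{\partial C}d=d'(s)T$. The remaining terms are collected into $O(d^2)$, using boundedness of $\kappa$, $d'$, $d''$ and the $C^3$ regularity of $\partial C$. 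The identification $\gamma(s+\sigma)=c+\sigma T+O(\sigma^2)$ then yields the stated expansion.

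\textbf{Main obstacle.} The main obstacle is Step 3: matching the constant $2$ and the sign to the precise definition of $\pi_C$, which the text only describes informally. A secondary technical point is justifying the implicit-function solution for $\sigma$, which requires transversality of the normal ray with $\partial\Omega$ and smallness of $d$ so that the error is uniformly $O(d^2)$. Since this theorem is recalled from \cite{return2026}, in the paper it may suffice to cite that work after giving this sketch.
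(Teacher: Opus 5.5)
The paper gives no argument for this theorem. It is quoted from \cite{return2026}, and nothing later in the paper fixes the constant. Your proposal does not close the argument either. All the content of the statement sits in the coefficient $-2$, and Step~3 does not derive it. You postulate an unspecified convention for $\pi_C$ and then \emph{identify} $\sigma T$ with $-2d\,\nabla_{\partial C}d$, which assumes the conclusion. This is a genuine gap, not a bookkeeping issue.

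Neither proposed repair works with the definitions of $\Phi_C$ and $\pi_C$ given in Section~2.
\begin{itemize}
\item \emph{Second copy of the tilt.} The return path is a straight segment whose direction is fixed by the normal of $\partial\Omega$ at $x$. The landing point enters only through the curvature of $\partial C$, which perturbs $\sigma$ by $O(\kappa\sigma^2\tan\theta)=O(d^2)$. So no second copy of the tilt appears.
\item \emph{Reflection-type $\pi_C$.} This doubles the tilt, but it gives $+2d_C\nabla_{\partial C}d_C$ to leading order, which still has the wrong sign.
\end{itemize}

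In fact your Step~2 is correct, and it contradicts the sign you report in Step~3. Use the exact relation $\tan\theta=d'/(1+\kappa d)$; your $\theta=d'+O(d\,d')$ silently assumes $d'$ small. The two component equations then give $\sigma=(d+\tfrac{\kappa}{2}\sigma^2)\tan\theta+O(\sigma^3)=d\,d'+O(d^2)$, hence
\[
F_C(c)=c+d_C(c)\,\nabla_{\partial C}d_C(c)+O\!\left(d_C(c)^2\right).
\]
This is an ascent with coefficient $+1$.

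The flat model confirms this. Over $\{y=0\}$ with $\partial\Omega=\{y=a+bx\}$, the inward normal is along $(b,-1)$. The ray from $(x_0,a+bx_0)$ therefore lands at $x_0+b(a+bx_0)=x_0+d\,d'$. Likewise, for a disk inside an off-center disk, the inward normals aim at the center of $\Omega$, so orbits move toward the thickest point.

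So the stated expansion cannot be obtained from the paper's definitions by tracking orientations. A proof would need the unstated definition of $\pi_C$ used in \cite{return2026}. Short of that, you should cite that work, as the paper does, and flag that the direct computation yields $+d_C\nabla_{\partial C}d_C$ rather than $-2d_C\nabla_{\partial C}d_C$.
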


This first-order expansion shows that the dynamics behaves like a geometric
descent for the thickness function. In the present work, we will show that
under tangency conditions this linear term vanishes, leading to a quadratic
dynamical regime.

\subsection{Data of the problem}

We consider sequences
\begin{equation}
\Omega_{0}\subset \Omega_{1}\subset \Omega_{2}\subset \dots
\label{eq:nested-omega}
\end{equation}
and
\begin{equation}
C_{0}\supset C_{1}\supset C_{2}\supset \dots
\label{eq:nested-C-again}
\end{equation}
satisfying the following conditions:
\begin{enumerate}
	\item Each \(C_{k}\) is a compact convex set with \(C^{3}\) boundary and strictly positive curvature \(\kappa_{k} > 0\).
	\item Each \(\Omega_{k}\) is an open set containing \(C_{k}\), with smooth boundary near \(\partial \Omega_{k}\).
	\item Each \(\Omega_{k}\) satisfies the \(C_{k}\)-geometric normal property.
	\item The domains are contained in a limiting domain \(\Omega_{k}\subseteq \Omega_{\infty}\).
	\item The convex sets satisfy the tangency condition: for each \(k\), the boundary \(\partial C_{k+1}\) is tangent to \(\partial C_{k}\) at least at one point.
\end{enumerate}

\subsection{Stability of the geometric normal property}

The construction of the dynamics involves nested sequences of convex sets and domains.
It is therefore important to ensure that the geometric normal property is preserved
in the limit.

\begin{lemma}[Stability of the geometric normal property]
	\label{lem:GNP-stability}
	Let \((C_k)_{k \ge 0}\) be a decreasing sequence of compact convex sets with \(C^{3}\) boundaries,
	and \((\Omega_k)_{k \ge 0}\) an increasing sequence of open sets such that each \(\Omega_k\)
	satisfies the \(C_k\)-geometric normal property.
	
	Define the limit sets
	\[
	C_{\infty} = \bigcap_{k \ge 0} C_k,
	\qquad
	\Omega_{\infty} = \bigcup_{k \ge 0} \Omega_k.
	\]
	
	Then \(\Omega_{\infty}\) satisfies the \(C_{\infty}\)-geometric normal property.
\end{lemma}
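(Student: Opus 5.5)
We verify the four conditions of the \(C_\infty\)-geometric normal property one at a time, passing each from \(C_k,\Omega_k\) to the limits by monotonicity and compactness. We assume, as the paper's setting implicitly does, that \(C_\infty\) has nonempty interior. Then it is a convex body, being a decreasing intersection of compact convex sets. Boundary regularity of \(\partial\Omega_\infty\) outside \(C_\infty\) (condition 2) is not inherited automatically from a union. We would add it as a standing hypothesis, namely that \(\Omega_\infty\) is the limiting domain of the data with smooth boundary, or read it locally, since near any point of \(\partial\Omega_\infty\) only the boundaries of \(\Omega_k\) for large \(k\) matter.

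\textbf{Condition 1.} This is immediate:
\[
C_\infty\subset C_0\subset\Omega_0\subset\Omega_\infty .
\]

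\textbf{Condition 3.} Fix \(c\in\partial C_\infty\). The key step is to approximate \(c\) by points \(c_k\in\partial C_k\) whose outward normals converge to an outward normal \(\nu\) of \(C_\infty\) at \(c\).

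Since \(C_k\to C_\infty\) in the Hausdorff metric, we first choose supporting lines. Take a supporting line of \(C_\infty\) at \(c\) with normal \(\nu\). Let \(c_k\) be the point of \(\partial C_k\) where \(\nu\) is the outward normal; it is unique by strict convexity. Then \(h_{C_k}(\nu)\to h_{C_\infty}(\nu)\), and any cluster point of \((c_k)\) lies on the face of \(C_\infty\) in direction \(\nu\). If that face is not a singleton, we instead pick \(\nu\) to be a direction exposing \(c\), or approximate such a direction.

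Next, the ray segments are connected. By hypothesis, \(\Delta_{c_k}\cap\Omega_k\) is connected, so it is a segment \([c_k,c_k+d_k\nu)\). Since the \(\Omega_k\) increase, the segments \(c_k+[0,d_k)\nu\) lie in \(\Omega_j\) for every \(j\ge k\).

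Finally, we pass to the limit ray \(\Delta_c=c+\mathbb{R}_+\nu\). The set \(\Delta_c\cap\Omega_\infty\) is open in the ray and contains \(c\). We show it is an interval starting at \(c\). Suppose \(p=c+t\nu\notin\Omega_\infty\) while \(q=c+s\nu\in\Omega_\infty\) with \(s>t\). Then \(q\in\Omega_j\) for some \(j\), and \(\Omega_j\) is open, so \(c_k+s\nu\in\Omega_j\subset\Omega_k\) for large \(k\). Connectedness then gives \(c_k+t\nu\in\Omega_k\), and we need this to persist in the limit. This is the \textbf{main obstacle}: the interval property of \(\Omega_k\)-sections does not pass naively to the limit, because \(c_k+t\nu\) converges to \(p\) from points lying in different sets. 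We would handle it by a compactness argument. The sets \(\overline{\Omega_\infty}^c\) are open, and \(p\notin\Omega_\infty\) means \(p\notin\Omega_k\) for all \(k\). Using the \(C^1\) regularity of \(\partial\Omega_j\) near \(p\), we would derive a contradiction, or, failing that, weaken the conclusion to ``\(\Delta_c\cap\Omega_\infty\) connected for some outward normal ray''. That weaker statement suffices, since we may replace \(\nu\) by a limit of admissible directions \(\nu_k\) chosen by compactness of \(\mathbb{S}^1\).

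\textbf{Condition 4.} For almost every \(x\in\partial\Omega_\infty\), we show the inward normal ray meets \(C_\infty\). Such an \(x\) lies in \(\partial\Omega_k\) for a sequence of \(k\), or is a limit of points \(x_k\in\partial\Omega_k\) whose normals converge. Each inward ray from \(x_k\) hits \(C_k\) at a point \(y_k\). The \(y_k\) lie in the compact set \(C_0\), so a subsequence converges to some \(y\). We have \(y\in C_j\) for every \(j\), because \(y_k\in C_k\subset C_j\) for \(k\ge j\) and \(C_j\) is closed. Hence \(y\in C_\infty\), and \(y\) lies on the limiting inward normal ray from \(x\). The exceptional null sets are countably many, so their union is null, and the ``almost every'' clause is preserved.
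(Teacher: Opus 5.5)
Your overall plan matches the paper's sketch: approximate, pass to limits, use monotonicity. The proposal has genuine gaps, though, and the main one is condition 3, the obstacle you flag yourself.

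First, a smaller point. Fixing $\nu$ and letting $c_k$ be the point of $\partial C_k$ with normal $\nu$ fails whenever $c$ is not an exposed point of $C_\infty$, for example when $c$ lies in the relative interior of an edge ($C_\infty$ need not be strictly convex). In that case there is no exposing direction to switch to. The paper's route works: take any $c_k\in\partial C_k$ with $c_k\to c$ and a subsequence $\nu_{C_k}(c_k)\to\nu$. Then $\nu$ is automatically an outer normal of $C_\infty$ at $c$, because $\langle y-c_k,\nu_{C_k}(c_k)\rangle\le 0$ for all $y\in C_\infty\subset C_k$.

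More seriously, neither fallback removes the obstacle. ``Connected for some outward normal ray'' is exactly what condition 3 already asks. Changing the limit direction does not prevent the limit ray from touching $\partial\Omega_\infty$ at an isolated point $p$ while the approximating rays pass beside $p$ inside $\Omega_k$. $C^1$ regularity does not exclude this either: locally $\Omega_\infty=\{y>-x^2\}$, with the ray along the $x$-axis, is perfectly smooth.

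The missing idea is to use condition 4.
\begin{itemize}
\item Your argument does show $[c,q]\subset\overline{\Omega_\infty}$.
\item At a gap point $p=c+t\nu$ with $t>0$ (so $p\notin C_\infty$), $C^1$ regularity of $\partial\Omega_\infty$ then forces $n_{\Omega_\infty}(p)\perp\nu$.
\item On a whole boundary arc around $p$, the inward normal rays therefore stay in $\{\langle z-c,\nu\rangle>0\}$ until they are farther than $\operatorname{diam}C_\infty$ from $c$.
\item So they miss $C_\infty$ on a set of positive length, contradicting condition 4 for $\Omega_\infty$.
\end{itemize}
Condition 3 must therefore be proved after, and from, condition 4. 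The paper's ``connectedness follows from the convergence of the boundaries'' does not supply this step either.

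Condition 4 has its own gap. The step ``a limit of points $x_k\in\partial\Omega_k$ whose normals converge'' is unjustified. The convergence $\Omega_k\uparrow\Omega_\infty$ gives only set-theoretic convergence of the boundaries, not $C^1$ convergence. The curves $\partial\Omega_k$ may oscillate, and typically $x$ lies on no $\partial\Omega_k$ (think of $\Omega_k=B(0,2-1/k)$). So your limit point $y$ lies on some ray from $x$ that need not be the inward normal ray. Also, your countable union of null sets lives on the curves $\partial\Omega_k$, not on $\partial\Omega_\infty$.

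Your compactness step is correct where it applies. If $x\in\partial\Omega_k$ for one $k$, then $x\in\overline{\Omega_j}\setminus\Omega_j=\partial\Omega_j$ for all $j\ge k$, with the same normal by interior tangency. Nested compactness then yields a hit point in $C_\infty$, off a countable union of null sets. This is exactly the repair of the paper's ``by monotonicity'', which runs the wrong way since $C_\infty\subset C_k$.

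For general $x$ you need a substitute for pointwise normal convergence. One option is the divergence theorem on $\Omega_k\cap B$, with $B$ a thin box around a $C^1$ arc of $\partial\Omega_\infty$ through $x$; it shows that $\int_{\partial\Omega_k\cap B}n_{\Omega_k}\,ds$ is nearly parallel to $n_{\Omega_\infty}(x)$. Meanwhile, almost every $n_{\Omega_k}(z)$ lies in the arc of directions from $z$ that meet $C_k$. That arc has opening $<\pi$ and is close to the corresponding arc for $C_\infty$ at $x$. Convexity of the cone it spans then puts $n_{\Omega_\infty}(x)$ in the limit arc.

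Both arguments use $C^1$ regularity of $\partial\Omega_\infty$, and you are right that it must be assumed; the ``local'' reading does not rescue it. The lemma is false without it. Take $C_k\equiv\overline{\mathbb D}$, and let $\Omega_k$ be the convex domains obtained from the interior of $\operatorname{conv}\bigl(\overline{B(0,3/2)}\cup\{(9/5,0)\}\bigr)$ by rounding its vertex with arcs of radius $1/k$. For large $k$ each $\Omega_k$ has the geometric normal property, since the tangent segments have length $\sqrt{0.99}<1$. Yet $\partial\Omega_\infty$ has a corner at $(9/5,0)\notin C_\infty$. The paper's sketch never addresses condition 2.
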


\begin{proof}[Proof sketch]
	Let \(c \in \partial C_{\infty}\). By convexity and compactness, there exists a sequence
	\(c_k \in \partial C_k\) such that \(c_k \to c\).
	
	For each \(k\), the outward normal ray \(\Delta_{c_k}\) lies in \(\Omega_k\)
	and satisfies the geometric normal property. By compactness of the unit circle,
	the normals \(\nu_{C_k}(c_k)\) admit a convergent subsequence,
	which defines a limiting normal direction at \(c\).
	
	The corresponding limiting ray remains inside \(\Omega_{\infty}\),
	since each of its points belongs to some \(\Omega_k\).
	The connectedness condition follows from the convergence of the boundaries.
	
	Conversely, for almost every \(x \in \partial \Omega_{\infty}\),
	there exists \(k\) such that \(x \in \partial \Omega_k\).
	The inward normal ray from \(x\) intersects \(C_k\),
	and therefore also \(C_{\infty}\) by monotonicity.
	
	This proves that \(\Omega_{\infty}\) satisfies the \(C_{\infty}\)-geometric normal property.
\end{proof}

\begin{remark}
	This result ensures that the geometric framework remains well-defined in the limit,
	and justifies the use of asymptotic arguments in the construction of the dynamics.
\end{remark}
\subsection{Tangency points}

\begin{definition}[Tangency set]
	For each \(k\), we define the tangency set
	\begin{equation}
	T_{k} = \{c\in \partial C_{k}\cap \partial C_{k+1} : \nu_{C_{k}}(c) = \nu_{C_{k+1}}(c)\}.
	\label{eq:tangency-set}
	\end{equation}
	The global tangency set is
	\begin{equation}
	T = \bigcup_{k=0}^{\infty} T_{k}.
	\label{eq:global-tangency}
	\end{equation}
	At a tangency point, the two boundaries coincide up to first order, i.e.,
	they share the same tangent line and normal vector.
\end{definition}

\subsection{Local geometry near tangency}

Let \(p\in T_{k}\) be a tangency point. We introduce local coordinates on
\(\partial C_{k}\) such that \(p\) corresponds to \(s = 0\), where \(s\)
denotes the geodesic distance along the boundary.

Locally, the two boundaries can be written as graphs over the common tangent line.
Let \(x\) be a coordinate along the tangent line and \(y\) the normal coordinate.
Then
\begin{equation}
\partial C_{k}: y = f_{k}(x), \qquad
\partial C_{k+1}: y = f_{k+1}(x),
\label{eq:local-graphs}
\end{equation}
with
\begin{equation}
f_{k}(0) = f_{k+1}(0) = 0, \qquad
f_{k}^{\prime}(0) = f_{k+1}^{\prime}(0) = 0.
\label{eq:tangency-conditions}
\end{equation}

The Taylor expansions take the form
\begin{equation}
f_{k}(x) = \frac{\kappa_{k}}{2} x^{2} + o(x^{2}), \qquad
f_{k+1}(x) = \frac{\kappa_{k+1}}{2} x^{2} + o(x^{2}),
\label{eq:taylor-expansion}
\end{equation}
where \(\kappa_{k}, \kappa_{k+1} > 0\) are the curvatures at \(p\).
Since \(C_{k+1}\subset C_{k}\) and the boundaries are tangent, one has
\begin{equation}
\kappa_{k} < \kappa_{k+1}.
\label{eq:curvature-order}
\end{equation}

The geodesic distance \(s\) on \(\partial C_{k}\) is related to \(x\) by
\begin{equation}
s = x + \frac{\kappa_{k}^{2}}{6} x^{3} + o(x^{3}).
\label{eq:geodesic-expansion}
\end{equation}
\section{A Solvable Quadratic Model for the Tangency Dynamics}
\label{sec:toy-model}
\medskip

The purpose of this section is to isolate the dynamical core of the
nested tangency mechanism. In the geometric setting, the return maps
near tangency points admit a quadratic expansion. We therefore begin
by analyzing an abstract quadratic model, which will serve as a normal
form for the local dynamics.
\medskip
Before turning to the full geometric setting, it is useful to isolate the
purely dynamical mechanism behind the nested tangency construction.
The essential feature of the local dynamics near a tangency point is the
quadratic recurrence
\[
s_{k+1} \sim \alpha_k s_k^2,
\]
with $\alpha_k>0$.
The following model captures this mechanism exactly and provides a fully
solvable reference case.

\begin{theorem}[Quadratic branching toy model]
	\label{thm:toy-model}
	Let $m\geq 1$ and let $\alpha_1,\dots,\alpha_m$ be positive constants.
	Fix $r>0$ such that
	\[
	q:= r \max_{1\leq i\leq m}\alpha_i <1 .
	\]
	For each $i\in\{1,\dots,m\}$ define
	\[
	G_i : [0,r]\to [0,r],\qquad G_i(s)=\alpha_i s^2 .
	\]
	Then the following properties hold.
	
	\begin{enumerate}
		\item[(i)] Each $G_i$ maps $[0,r]$ into itself.
		
		\item[(ii)] For every itinerary $(i_k)_{k\geq 0}\in\{1,\dots,m\}^{\mathbb{N}}$
		and every initial value $s_0\in[0,r]$, the sequence defined by
		\[
		s_{k+1}=G_{i_k}(s_k)=\alpha_{i_k}s_k^2
		\]
		is well defined for all $k\geq 0$, remains in $[0,r]$, and converges to $0$.
		
		\item[(iii)] More precisely, one has the explicit formula
		\[
		s_n
		=
		\alpha_{i_{n-1}}
		\alpha_{i_{n-2}}^{\,2}
		\alpha_{i_{n-3}}^{\,2^2}
		\cdots
		\alpha_{i_0}^{\,2^{n-1}}
		\, s_0^{\,2^n},
		\qquad n\geq 1,
		\]
		and the uniform estimate
		\[
		s_n \leq r\, q^{\,2^n-1}.
		\]
		In particular, the convergence to $0$ is double-exponential.
		
		\item[(iv)] In logarithmic coordinates
		\[
		u_k:=-\log s_k \qquad (s_k>0),
		\]
		the dynamics becomes affine:
		\[
		u_{k+1}=2u_k-\log \alpha_{i_k}.
		\]
		Equivalently, the inverse branches are the contractions
		\[
		F_i(u)=\frac{u+\log \alpha_i}{2},
		\qquad i=1,\dots,m,
		\]
		each of Lipschitz constant $1/2$.
		
		\item[(v)] If all coefficients coincide, i.e.\
		\[
		\alpha_1=\cdots=\alpha_m=\alpha,
		\]
		then every forward orbit satisfies
		\[
		s_{n+1}=\alpha s_n^2,
		\qquad
		s_n=\alpha^{\,2^n-1}s_0^{\,2^n},
		\]
		and hence
		\[
		-\log s_n = 2^n(-\log s_0) - (2^n-1)\log \alpha .
		\]
	\end{enumerate}
\end{theorem}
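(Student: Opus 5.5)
The proof is elementary and I would go through the items in order, with induction on $n$ as the main tool.

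For (i), take $s\in[0,r]$. Then $0\le G_i(s)=\alpha_i s^2\le \alpha_i r\cdot r\le q r\le r$, using $s\le r$ and $\alpha_i r\le q<1$.

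For (ii), well-definedness and invariance of $[0,r]$ follow from (i) by induction. Convergence to $0$ will follow from the uniform bound in (iii), so I would prove (iii) first.

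For (iii), I would prove the explicit product formula by induction on $n$. The case $n=1$ is just $s_1=\alpha_{i_0}s_0^2$. For the step, write $s_{n+1}=\alpha_{i_n}s_n^2$ and square the formula for $s_n$. Every exponent doubles, giving $\alpha_{i_{n-j}}^{2^{j}}$ for $j\ge1$ and $s_0^{2^{n+1}}$, and the new leading factor $\alpha_{i_n}$ appears with exponent $1$.

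For the estimate, the cleanest route is the rescaled quantity $t_k:=q\,s_k/r\in[0,q]$. Since $\alpha_{i_k}\le q/r$,
\[
t_{k+1}=\frac{q}{r}\alpha_{i_k}s_k^2\le \frac{q}{r}\cdot\frac{q}{r}s_k^2=t_k^2 .
\]
Hence $t_n\le t_0^{2^n}\le q^{2^n}$, that is, $s_n=r\,t_n/q\le r\,q^{2^n-1}$. Double-exponential decay, and with it the convergence in (ii), follows immediately because $q<1$. The only delicate point in the whole theorem is choosing this normalization so that the exponent comes out exactly as $2^n-1$ rather than $2^n$. I do not expect a real obstacle.

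For (iv), if $s_k>0$ then $s_{k+1}=\alpha_{i_k}s_k^2>0$. Taking $-\log$ gives $u_{k+1}=2u_k-\log\alpha_{i_k}$. Solving $u=2v-\log\alpha_i$ for $v$ gives the inverse branch $F_i(u)=(u+\log\alpha_i)/2$, which is affine with slope $1/2$ and therefore has Lipschitz constant $1/2$. I would also note that if $s_0=0$ the orbit is identically zero, so restricting to $s_k>0$ loses nothing.

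For (v), set all $\alpha_{i_k}=\alpha$ in (iii). The product of the $\alpha$ factors has exponent $1+2+\dots+2^{n-1}=2^n-1$, which gives $s_n=\alpha^{2^n-1}s_0^{2^n}$. Taking $-\log$ gives the stated identity, which is also the solution of the affine recursion $u_{k+1}=2u_k-\log\alpha$.
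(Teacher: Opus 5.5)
Your proof is correct and follows the paper's argument item by item: invariance of $[0,r]$, induction for the product formula, taking logarithms, and specializing to equal coefficients. The only difference is cosmetic: for the uniform estimate you rescale to $t_k = q s_k/r = (\max_i \alpha_i)\, s_k$ so that $t_{k+1}\le t_k^2$, whereas the paper bounds each factor of the explicit product by $\max_i\alpha_i$ and sums the exponents $1+2+\cdots+2^{n-1}=2^n-1$, and both give $s_n\le r\,q^{2^n-1}$.
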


\begin{proof}
	We proceed step by step.
	
	\medskip
	\noindent
	\textbf{Step 1: invariance of the interval.}
	For $s\in[0,r]$ and any $i\in\{1,\dots,m\}$,
	\[
	0\leq G_i(s)=\alpha_i s^2 \leq \alpha_i r^2 \leq r\max_j \alpha_j\, r = qr < r.
	\]
	Hence $G_i([0,r])\subset [0,r]$, proving (i).
	
	\medskip
	\noindent
	\textbf{Step 2: explicit formula.}
	We prove by induction that
	\[
	s_n
	=
	\alpha_{i_{n-1}}
	\alpha_{i_{n-2}}^{\,2}
	\alpha_{i_{n-3}}^{\,2^2}
	\cdots
	\alpha_{i_0}^{\,2^{n-1}}
	\, s_0^{\,2^n}.
	\]
	For $n=1$, this is exactly
	\[
	s_1=\alpha_{i_0}s_0^2.
	\]
	Assume the formula holds at rank $n$. Then
	\[
	s_{n+1}
	=
	\alpha_{i_n}s_n^2
	=
	\alpha_{i_n}
	\left(
	\alpha_{i_{n-1}}
	\alpha_{i_{n-2}}^{\,2}
	\cdots
	\alpha_{i_0}^{\,2^{n-1}}
	s_0^{\,2^n}
	\right)^2.
	\]
	Therefore
	\[
	s_{n+1}
	=
	\alpha_{i_n}
	\alpha_{i_{n-1}}^{\,2}
	\alpha_{i_{n-2}}^{\,2^2}
	\cdots
	\alpha_{i_0}^{\,2^{n}}
	s_0^{\,2^{n+1}},
	\]
	which is the desired formula at rank $n+1$. Thus (iii) holds.
	
	\medskip
	\noindent
	\textbf{Step 3: uniform bound and convergence.}
	Let
	\[
	A:=\max_{1\leq i\leq m}\alpha_i.
	\]
	Since $s_0\leq r$, the explicit formula yields
	\[
	s_n
	\leq
	A^{\,1+2+\cdots+2^{n-1}}\, r^{\,2^n}.
	\]
	Using
	\[
	1+2+\cdots+2^{n-1}=2^n-1,
	\]
	we obtain
	\[
	s_n \leq A^{\,2^n-1} r^{\,2^n}
	= r\,(Ar)^{\,2^n-1}
	= r\, q^{\,2^n-1}.
	\]
	Since $0<q<1$, it follows that $s_n\to 0$ as $n\to\infty$, and the decay is
	double-exponential. This proves (ii) and the estimate in (iii).
	
	\medskip
	\noindent
	\textbf{Step 4: logarithmic coordinates.}
	Assume now that $s_k>0$. Then
	\[
	u_{k+1}
	=
	-\log s_{k+1}
	=
	-\log(\alpha_{i_k}s_k^2)
	=
	-\log \alpha_{i_k} -2\log s_k
	=
	2u_k-\log \alpha_{i_k}.
	\]
	This proves the affine form of the dynamics.
	
	Conversely, solving for $u_k$ in terms of $u_{k+1}$ gives
	\[
	u_k = \frac{u_{k+1}+\log \alpha_{i_k}}{2}.
	\]
	Hence the inverse branches are
	\[
	F_i(u)=\frac{u+\log \alpha_i}{2}.
	\]
	For all $u,v$,
	\[
	|F_i(u)-F_i(v)|=\frac12 |u-v|,
	\]
	so each $F_i$ is a contraction of ratio $1/2$. This proves (iv).
	
	\medskip
	\noindent
	\textbf{Step 5: constant-coefficient case.}
	If $\alpha_i=\alpha$ for all $i$, then the recurrence becomes
	\[
	s_{n+1}=\alpha s_n^2.
	\]
	The explicit formula simplifies to
	\[
	s_n = \alpha^{\,1+2+\cdots+2^{n-1}} s_0^{\,2^n}
	= \alpha^{\,2^n-1} s_0^{\,2^n}.
	\]
	Taking logarithms gives
	\[
	-\log s_n
	=
	-(2^n-1)\log\alpha - 2^n \log s_0
	=
	2^n(-\log s_0) - (2^n-1)\log\alpha.
	\]
	This proves (v).
\end{proof}

\begin{remark}
	\label{rem:toy-model-purpose}
	The preceding theorem isolates the dynamical core of the nested tangency
	mechanism. In the full geometric problem, the coefficients $\alpha_k$
	arise from second-order geometric data (curvatures, thickness, and the
	geometry of the outer domain), whereas in the toy model they are prescribed
	constants. The theorem shows that once a quadratic law of the form
	\[
	s_{k+1}=\alpha_k s_k^2
	\]
	is established, double-exponential convergence follows immediately.
\end{remark}

\begin{remark}
	\label{rem:toy-model-renorm}
	The logarithmic form
	\[
	u_{k+1}=2u_k-\log \alpha_{i_k}
	\]
	shows that the local dynamics is governed by a universal doubling mechanism.
	Thus the quadratic tangency law may be interpreted as a renormalization
	phenomenon: the nonlinearity in the original variable $s$ becomes an affine
	iteration in the logarithmic variable $u=-\log s$.
\end{remark}

\begin{corollary}[Uniform double-exponential estimate]
	\label{cor:toy-model-double-exp}
	Under the assumptions of Theorem~\ref{thm:toy-model}, for every itinerary
	$(i_k)$ and every initial condition $s_0\in[0,r]$ one has
	\[
	s_n \leq r\, q^{\,2^n-1},
	\qquad q=r\max_i \alpha_i <1.
	\]
	Equivalently,
	\[
	-\log s_n \geq (2^n-1)|\log q| - |\log r|.
	\]
	Hence the number of correct digits in the approximation of the tangency point
	grows exponentially with the number of iterations.
\end{corollary}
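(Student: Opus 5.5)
The first inequality is exactly the uniform estimate already established in part (iii) of Theorem~\ref{thm:toy-model}, so the plan is to invoke it directly rather than reprove it. For the record, the route there was the explicit product formula for $s_n$, bounding each $\alpha_{i_j}$ by $A=\max_i\alpha_i$, bounding $s_0$ by $r$, and summing the geometric series of exponents $1+2+\cdots+2^{n-1}=2^n-1$ to get $A^{2^n-1}r^{2^n}=r\,(Ar)^{2^n-1}=r\,q^{2^n-1}$. Nothing new is needed for this part, and it holds for every itinerary and every $s_0\in[0,r]$.

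For the logarithmic reformulation, I will first dispose of the degenerate case. If $s_0=0$, then $s_n=0$ for all $n$, and we adopt the convention $-\log 0=+\infty$, so the inequality is trivial. If $s_0>0$, then $s_n>0$ for all $n$ by the explicit formula, since all $\alpha_i>0$.

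In the case $s_n>0$, I apply the decreasing function $-\log$ to $s_n\le r\,q^{2^n-1}$, which gives
\[
-\log s_n\ \ge\ -\log r-(2^n-1)\log q .
\]
Since $0<q<1$, we have $\log q=-|\log q|$, so the second term equals $(2^n-1)|\log q|$. For the first term, the elementary bound $-\log r\ge -|\log r|$ holds whatever the sign of $\log r$. Together these give
\[
-\log s_n\ \ge\ (2^n-1)|\log q|-|\log r|,
\]
which is the stated inequality. The bound is slightly weaker than the raw one when $r<1$, but it is valid uniformly.

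For the interpretation, the number of correct decimal digits of $s_n$ as an approximation of the tangency point $s=0$ is $-\log_{10}s_n=-\log s_n/\log 10$. By the previous display this is bounded below by $c\,2^n-C$ with $c=|\log q|/\log 10>0$, so it grows exponentially in $n$.

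There is no real obstacle here. The only points requiring care are the sign conventions: using $|\log q|=-\log q$ because $q<1$, and replacing $-\log r$ by $-|\log r|$ so that the estimate does not depend on whether $r$ is larger or smaller than $1$. The other point is the convention for the case $s_n=0$.
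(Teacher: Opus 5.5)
Your proposal is correct and follows the paper's route: the paper calls the corollary an immediate reformulation of the bound $s_n\le r\,q^{2^n-1}$ from the proof of Theorem~\ref{thm:toy-model}, and you carry out the logarithmic step with the right sign care, including the degenerate case $s_0=0$. You are also right that when $r<1$ the logarithmic form is implied by the first bound but is not equivalent to it.
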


\begin{proof}
	This is an immediate reformulation of the estimate obtained in the proof of
	Theorem~\ref{thm:toy-model}.
\end{proof}
The remainder of the paper is devoted to showing that the geometric
return maps associated with nested convex configurations reduce locally
to this quadratic model, with coefficients determined by curvature and
thickness.
\section{Complex Analysis Tools in the Plane}

Since the ambient space is \(\mathbb{R}^{2}\simeq \mathbb{C}\), the geometry of convex bodies and normal fields can be fruitfully described using complex analysis. This provides a powerful alternative viewpoint that simplifies computations and reveals hidden conformal structures. The tools introduced here will be used throughout the remainder of the paper, in particular for the derivation of the quadratic tangency law and the iterated function system (IFS) representation.

\subsection{Conformal parametrization of convex boundaries}

Let \(C\subset \mathbb{C}\) be a compact convex set with \(C^{3}\) boundary. By the Riemann mapping theorem, there exists a conformal map
\[
\phi : \mathbb{D} \to \operatorname{int}(C)
\]
from the unit disk \(\mathbb{D}\) onto the interior of \(C\). By the Kellogg-Warschawski theorem, the \(C^{3}\) regularity of \(\partial C\) implies that \(\phi\) extends to a \(C^{2}\) diffeomorphism of the boundaries \(\partial \mathbb{D} \to \partial C\).

The outward unit normal \(\nu_{C}\) at \(\phi(e^{i\theta})\) is given by
\[
\nu_{C}(\phi(e^{i\theta})) =
i e^{i\theta}\,
\frac{\phi^{\prime}(e^{i\theta})}{|\phi^{\prime}(e^{i\theta})|},
\]
up to an overall sign depending on orientation (here we choose the outward normal).

\subsection{Curvature via the conformal map}

The curvature \(\kappa(c)\) at \(c = \phi(e^{i\theta})\) can be expressed directly in terms of \(\phi\):
\begin{equation}
\kappa(\phi(e^{i\theta})) =
\frac{1}{|\phi^{\prime}(e^{i\theta})|}
\operatorname{Im}\!\left(
\frac{\phi^{\prime\prime}(e^{i\theta})}{\phi^{\prime}(e^{i\theta})}
e^{i\theta}
\right).
\label{eq:curvature-conformal}
\end{equation}

Equivalently,
\[
\kappa = -\frac{1}{|\phi^{\prime}|}
\frac{d}{d\theta} \arg(\phi^{\prime}(e^{i\theta})).
\]

These formulas will be used to relate the quadratic coefficient
\(\alpha_{k}\) to the derivatives of the conformal parametrization.

\subsection{Complex representation of normal rays}

The outward normal ray from a boundary point \(c = \phi(e^{i\theta})\) can be parametrized as
\[
c + t\nu_{C}(c), \qquad t \geq 0.
\]

In complex notation, this becomes
\begin{equation}
z(t) = \phi(e^{i\theta}) +
t\, i e^{i\theta}
\frac{\phi^{\prime}(e^{i\theta})}{|\phi^{\prime}(e^{i\theta})|}.
\label{eq:normal-ray-complex}
\end{equation}

The thickness function \(d_{C}(c)\) is the maximal \(t\) such that \(z(t)\)
remains inside \(\Omega\). This can be interpreted as the distance from
\(\partial C\) to \(\partial \Omega\) along the normal direction.

\subsection{Harmonic measure and normal displacement}

For a domain \(\Omega \supset C\) satisfying the geometric normal property,
the outward normal ray from \(c\) meets \(\partial \Omega\) at a unique point
\(\Phi_{C}(c)\).

Using a conformal map \(\psi : \Omega \to \mathbb{D}\) (or to a half-plane),
the normal field can be interpreted as the gradient of a harmonic function,
such as \(\operatorname{Im}\psi\) or \(\log|\psi|\). In particular, the level
sets of \(|\psi|\) are orthogonal to the rays of \(\arg\psi\), which correspond
to normal directions.

\subsection{Complex angular variable}

Let \(c\in \partial C\) and let \(x = \Phi_{C}(c)\in \partial \Omega\).
Define the complex tangent vectors
\[
\tau_{C}(c) = i\nu_{C}(c), \qquad
\tau_{\Omega}(x) = i n_{\Omega}(x),
\]
where \(n_{\Omega}(x)\) is the inward unit normal to \(\partial \Omega\).

The angle \(\theta\) between \(\nu_{C}(c)\) and \(n_{\Omega}(x)\) is defined by
\begin{equation}
e^{i\theta} = \overline{\nu_{C}(c)}\, n_{\Omega}(x).
\label{eq:angle-definition}
\end{equation}

This expression is intrinsic and independent of the choice of local coordinates.

\subsection{Illustration: disk and half-plane}

As a concrete example, take \(C = \mathbb{D}\) and
\(\Omega = \{z : \operatorname{Im} z > 0\}\).
Then
\[
\nu_{C}(e^{i\theta}) = e^{i\theta}, \qquad
n_{\Omega}(x) = -i \quad (x\in \mathbb{R}),
\]
and therefore
\[
e^{i\theta} = \overline{e^{i\theta}}(-i) = -i e^{-i\theta}.
\]
This representation simplifies explicit computations and will be useful
in the study of Ford circles.

\subsection{Conformal interpretation of the transition map}

The transition operator
\[
G_{k} = \pi_{\Omega_{k}, C_{k+1}} \circ \Phi_{\Omega_{k}, C_{k}}
\]
can be interpreted locally as a composition of maps between boundaries.

Near a tangency point \(p\), one can introduce a conformal coordinate
\(w\) sending \(p\) to \(0\) and the common tangent to the real axis.
In these coordinates, the boundaries \(\partial C_{k}\) and
\(\partial C_{k+1}\) are locally represented as graphs of functions with
quadratic leading terms.

The outward displacement along the normal corresponds to a vertical
translation in the complex plane, while the projection back to the
next boundary introduces a nonlinear correction. As a result, the
transition operator admits a second-order expansion in local coordinates.

\subsection{Second-order expansion from conformal composition}

\begin{proposition}[Second-order expansion from conformal composition]
	\label{prop:conformal-composition}
	Let \(U\subset \mathbb{C}\) be a neighborhood of \(0\) and let
	\(\phi, \psi : U \to \mathbb{C}\) be conformal maps such that
	\[
	\phi(0) = \psi(0) = 0, \qquad
	\phi'(0) = \psi'(0) = 1.
	\]
	Then the composition \(\psi^{-1} \circ \phi\) satisfies
	\begin{equation}
	\psi^{-1} \circ \phi(z) = z + (a-b)z^{2} + o(z^{2}),
	\label{eq:conformal-second-order}
	\end{equation}
	where
	\[
	\phi(z) = z + a z^{2} + o(z^{2}), \qquad
	\psi(z) = z + b z^{2} + o(z^{2}).
	\]
	In particular, the quadratic coefficient is determined by the difference
	of the second derivatives at the origin.
\end{proposition}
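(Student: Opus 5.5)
The plan is to expand both maps at second order and solve for the inverse directly. Since \(\psi\) is conformal with \(\psi'(0)=1\neq 0\), the holomorphic inverse function theorem gives a holomorphic local inverse \(\psi^{-1}\) on a neighborhood of \(0\), with \(\psi^{-1}(0)=0\) and \((\psi^{-1})'(0)=1\). Because \(\phi(0)=0\) and \(\phi\) is continuous, shrinking \(U\) if needed makes \(\psi^{-1}\circ\phi\) well defined and holomorphic near \(0\). Being holomorphic, it has a convergent Taylor series, so the \(o(z^2)\) remainders can be replaced by \(O(z^3)\) throughout. Writing \(a=\phi''(0)/2\) and \(b=\psi''(0)/2\) identifies the coefficients in the statement.

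\textbf{Step 1: expand the inverse.} First I determine the second-order expansion of \(\psi^{-1}\). Write \(\psi^{-1}(w)=w+cw^2+O(w^3)\). Substituting into \(\psi(\psi^{-1}(w))=w\) gives
\[
w + cw^2 + b\,(w+cw^2)^2 + O(w^3) = w + (c+b)w^2 + O(w^3).
\]
Hence \(c=-b\). Equivalently, differentiating the identity \(\psi(\psi^{-1}(w))=w\) twice at \(0\) gives \((\psi^{-1})''(0)=-\psi''(0)\).

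\textbf{Step 2: compose.} Next I substitute \(w=\phi(z)=z+az^2+O(z^3)\) into this expansion. Since \(w=O(z)\), we have \(w^2=z^2+O(z^3)\) and \(O(w^3)=O(z^3)\). Therefore
\[
\psi^{-1}(\phi(z)) = z + az^2 - bz^2 + O(z^3) = z + (a-b)z^2 + o(z^2),
\]
which is exactly \eqref{eq:conformal-second-order}. The final claim of the proposition is the identity \(a-b=\tfrac12(\phi''(0)-\psi''(0))\).

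\textbf{Main obstacle.} The computation itself is routine; the only point needing care is justifying the manipulation of the remainders. Holomorphy removes this difficulty, since \(O(w^3)\) with \(w=O(z)\) is \(O(z^3)\) and all expansions are genuine power series. If the statement were meant only for maps with finite-order Taylor expansions (say \(C^2\) real maps), I would instead apply the chain rule for second derivatives, \((\psi^{-1}\circ\phi)''(0)=(\psi^{-1})''(0)\phi'(0)^2+(\psi^{-1})'(0)\phi''(0)\). This argument needs no series at all.
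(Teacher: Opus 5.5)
Your proposal is correct and follows the same route as the paper: expand $\psi^{-1}(w)=w-bw^{2}+\cdots$ and substitute $w=\phi(z)$ to read off the coefficient $a-b$. Your version also derives the inverse expansion, justifies that $\psi^{-1}\circ\phi$ is defined near $0$, and handles the remainders via holomorphy; the paper leaves these steps implicit.
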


\begin{proof}
	Write \(\phi(z) = z + a z^{2} + o(z^{2})\) and
	\(\psi(z) = z + b z^{2} + o(z^{2})\).
	Then
	\[
	\psi^{-1}(w) = w - b w^{2} + o(w^{2}).
	\]
	Substituting \(w = \phi(z)\) gives
	\[
	\psi^{-1}(\phi(z)) =
	(z + a z^{2}) - b(z + a z^{2})^{2} + o(z^{2})
	= z + (a-b)z^{2} + o(z^{2}),
	\]
	which proves the result.
\end{proof}

\begin{corollary}[Second-order structure of the transition map]
	\label{cor:second-order-structure}
	In the geometric configuration of Section~2, after a suitable conformal
	change of coordinates sending a tangency point to \(0\) and the common
	tangent to the real axis, the transition operator \(G_{k}\) admits an expansion
	\begin{equation}
	G_{k}(z) = z + \beta_{k} z^{2} + o(z^{2}),
	\label{eq:Gk-second-order}
	\end{equation}
	where the coefficient \(\beta_{k}\) depends on the curvature and thickness
	data of the configuration.
	
	Moreover, the tangency condition implies that the linear term vanishes
	in the intrinsic (geodesic) coordinate, so that after reparametrization
	the dynamics reduces to a quadratic form
	\[
	G_{k}(s) = \alpha_{k} s^{2} + o(s^{2}),
	\]
	as will be established in Theorem~\ref{thm:quadratic-tangency}.
\end{corollary}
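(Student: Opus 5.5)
The plan is to realize the transition operator \(G_k\) near a tangency point \(p\in T_k\) as a composition of two local boundary maps, each tangent to the identity, and then apply Proposition~\ref{prop:conformal-composition}.

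\textbf{Normalization.} Translate and rotate so that \(p=0\) and the common tangent line is the real axis. By \eqref{eq:local-graphs}--\eqref{eq:taylor-expansion}, both \(\partial C_k\) and \(\partial C_{k+1}\) are then graphs \(y=f_k(x)\), \(y=f_{k+1}(x)\) through \(0\) with horizontal tangent. Using the conformal parametrizations of Section~4, or simply holomorphic germs whose restrictions to the real axis parametrize these graphs, I obtain local charts \(\phi_k,\phi_{k+1}\) with \(\phi(0)=0\) and \(\phi'(0)=1\). The normalization \(\phi'(0)=1\) is possible because the tangent directions coincide; this is precisely where the tangency condition enters.

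\textbf{Each factor is tangent to the identity.} I express \(\Phi_{\Omega_k,C_k}\) in the chart by \eqref{eq:normal-ray-complex}: a point \(x\) is sent to \(\phi_k(x)+d_k(x)\nu_k(x)\). Likewise \(\pi_{\Omega_k,C_{k+1}}\) is the inverse of the analogous normal-ray map built on \(\partial C_{k+1}\) with thickness \(d_{k+1}\). Since \(\nu_k(0)=\nu_{k+1}(0)\), both maps have the same first-order behaviour at \(0\): a common vertical translation by \(d\) plus the identity. After subtracting that common translation, each factor is tangent to the identity. Its quadratic coefficient is read off from the Taylor data \(\kappa_k/2\), \(\kappa_{k+1}/2\), from the derivatives of \(d_k, d_{k+1}\), and from the rotation of the normal, which contributes \(t\,\kappa\) by the curvature formula \eqref{eq:curvature-conformal}.

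\textbf{Composition.} Applying Proposition~\ref{prop:conformal-composition} with \(a\) the coefficient for \(C_k\) and \(b\) the coefficient for \(C_{k+1}\) gives \(G_k(z)=z+\beta_k z^2+o(z^2)\), with \(\beta_k=a-b\), explicitly
\[
\beta_k=\tfrac{i}{2}\bigl(\kappa_{k+1}-\kappa_k\bigr)\bigl(1+d\,\cdot\,\text{correction}\bigr)+\text{terms in } d'(0).
\]
The real part of \(\beta_k\), which is the geodesic component, is what survives after restriction to the boundary. Since \(\kappa_{k+1}\neq\kappa_k\) by \eqref{eq:curvature-order}, the coefficient \(\beta_k\) is nonzero.

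\textbf{Main obstacle.} The genuine difficulty is that \(\Phi\) and \(\pi\) are real maps between curves, not holomorphic maps. Proposition~\ref{prop:conformal-composition} therefore applies only after replacing each factor by its second-order jet and verifying that the jet can be taken holomorphic, or else by rederiving the same composition rule for real \(C^2\) maps \(x\mapsto x+ax^2\); the latter is immediate. I would take the real route and treat the complex language as bookkeeping.

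The second half of the corollary, the vanishing of the linear term in the geodesic coordinate \(s\) measuring distance to the tangency set, I would not prove here. As the statement says, it is deferred to Theorem~\ref{thm:quadratic-tangency}. I would only note that in that coordinate one measures the displacement \(G_k(z)-z\) relative to \(T_k\), so the identity part drops out and \(\alpha_k\) is read off from \(\beta_k\), using \eqref{eq:geodesic-expansion} to confirm that the change from \(x\) to \(s\) does not affect second order.
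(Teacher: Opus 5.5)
You are following the same route the paper uses for this corollary: factor $G_k$, invoke Proposition~\ref{prop:conformal-composition}, and defer the rest to Theorem~\ref{thm:quadratic-tangency}. The factorization, however, fails already at first order.

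\textbf{First factor.} Write $\partial C_k$ as $x-\tfrac{i}{2}\kappa_k x^2+\cdots$ and $\nu_k=i+\kappa_k x+\cdots$. Then the radial map is $x\mapsto ih+(1+h\kappa_k)x+O(x^2)$, with $h=d_k(p)$. The rotation of the normal, $t\kappa$, is a \emph{linear} effect, because normal rays of a convex curve diverge. So no factor is tangent to the identity once the translation is removed.

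\textbf{Second factor.} $\pi_{\Omega_k,C_{k+1}}$ is not the inverse of a normal-ray map built on $\partial C_{k+1}$. It follows the inward normals of $\partial\Omega_k$, which agree with the normals of $C_{k+1}$ only at $p$. Also, $d_{k+1}$ measures distance to $\partial\Omega_{k+1}$, not to $\partial\Omega_k$. Even read charitably, your model yields $G_k'(p)=(1+h\kappa_k)/(1+h\kappa_{k+1})\neq 1$, which contradicts your own conclusion.

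\textbf{Correct first-order computation.} It needs $d_k'(p)=0$ merely to have $G_k(p)=p$, and gives
\[
G_k'(p)=(1+h\kappa_k)\Bigl(1-\frac{h}{R_k}\Bigr),
\]
where $R_k$ is the signed radius of curvature of $\partial\Omega_k$ at $x(p)$. This does not involve $C_{k+1}$ at all. It equals $1$ only when $R_k=1/\kappa_k+h$, e.g.\ when $\Omega_k$ is locally parallel to $C_k$. One planar chart for both curves cannot change this derivative, so even the first half of the statement needs such an extra hypothesis.

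\textbf{Second half.} No reparametrization can reach it. If $G_k(z)=z+\beta_k z^2+o(z^2)$ in a chart, then in any $C^1$ coordinates centered at $p$ on $\partial C_k$ and $\partial C_{k+1}$, the derivative at $p$ is a product of nonzero Jacobians. Your remark that ``the identity part drops out'' silently replaces $G_k$ by the displacement $G_k-\mathrm{id}$. The recursion $s_{k+1}=\alpha_k s_k^2$, however, concerns $G_k$ itself.

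\textbf{A concrete check.} Let $C_k$ be the disk of radius $r_k$ about $O$, $\Omega_k$ the concentric disk of radius $r_k+h$, and $C_{k+1}$ a disk internally tangent to $C_k$ at $p$.
\begin{itemize}
\item $\Phi$ and $\pi$ run along the same radius through $O$.
\item In arclength coordinates $G_k(s)=s+O(s^3)$; it is odd by reflection symmetry. So orbits are not contracted at all to first order.
\item In the complex chart $G_k(z)=z\mp\tfrac{i}{2}(\kappa_{k+1}-\kappa_k)z^2+o(z^2)$, the sign depending on orientation.
\end{itemize}
Thus $\beta_k$ is purely imaginary and independent of $h$. It merely records the normal gap between the two curves, forced by $G_k(z)\in\partial C_{k+1}$, and the ``real part'' you propose to keep is zero.

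In general, changing $C_{k+1}$ only moves the stopping point by $O(x^2)$ along a return ray whose tangential component is $O(x)$. Hence $C_{k+1}$ enters the tangential coordinate of $G_k$ only at order $x^3$, and the curvature mismatch can never produce a tangential quadratic coefficient. The same conflation of $G_k$ with $G_k-\mathrm{id}$ drives the paper's Lemma~\ref{lem:vanishing-linear}, so deferring to Theorem~\ref{thm:quadratic-tangency} does not close the gap.
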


\subsection{Advantages of complex methods}

Using complex analysis, one can:
\begin{itemize}
	\item compute the quadratic coefficient \(\alpha_{k}\) via conformal invariants,
	\item relate the dynamics to iteration of quadratic maps,
	\item connect the limit set to classical objects in complex dynamics,
	\item obtain uniform bounds using the theory of univalent functions.
\end{itemize}

Thus, the planar setting provides a natural bridge between convex geometry
and complex dynamical systems.

\subsection{Transition to the dynamics}

The conformal framework captures the second-order structure of the dynamics,
while the cancellation of the linear term is a purely geometric effect arising
from the tangency condition.

In the next section, we return to the geometric construction of the dynamics
and derive the quadratic tangency law, which makes this mechanism precise.
\section{Construction of the Dynamics}

In this section we construct the dynamical system generated by the nested convex configuration.

\subsection{Iterative construction}

Let \(c_{0} \in \partial C_{0}\) be an initial point.
Using the maps introduced in Section~2, we construct a sequence of points
\begin{equation}
c_{0}, c_{1}, c_{2}, \ldots
\label{eq:trajectory}
\end{equation}
as follows.

First, we move from \(\partial C_{0}\) to \(\partial \Omega_{0}\) along the outward normal using the radial map:
\begin{equation}
x_{0} = \Phi_{\Omega_{0}, C_{0}}(c_{0})
= c_{0} + d_{0}(c_{0})\,\nu_{C_{0}}(c_{0}),
\label{eq:x0}
\end{equation}
where \(d_{0}\) denotes the thickness function associated with \((C_{0}, \Omega_{0})\).

Next, we project \(x_{0}\) back to the boundary of the next convex set \(C_{1}\) using the inward normal to \(\Omega_{0}\):
\begin{equation}
c_{1} = \pi_{\Omega_{0}, C_{1}}(x_{0})
= x_{0} + t_{0}^{(1)}(x_{0})\, n_{\Omega_{0}}(x_{0}),
\label{eq:c1}
\end{equation}
where \(n_{\Omega_{0}}\) denotes the inward unit normal to \(\partial \Omega_{0}\), and \(t_{0}^{(1)}(x_{0})\) is the distance from \(x_{0}\) to \(\partial C_{1}\) along this direction.

Repeating the same construction yields the sequence
\begin{equation}
x_{k} = \Phi_{\Omega_{k}, C_{k}}(c_{k}),
\qquad
c_{k+1} = \pi_{\Omega_{k}, C_{k+1}}(x_{k}).
\label{eq:iteration}
\end{equation}

\subsection{Transition operators}

The previous construction can be written in a compact form by introducing the transition operators
\begin{equation}
G_{k} = \pi_{\Omega_{k}, C_{k+1}} \circ \Phi_{\Omega_{k}, C_{k}}
: \partial C_{k} \to \partial C_{k+1}.
\label{eq:transition-operator}
\end{equation}

Thus, the dynamics is given by the non-autonomous iteration
\[
c_{k+1} = G_{k}(c_{k}).
\]

\subsection{Fundamental evolution equation}

Using the definitions, the displacement between successive points can be written as
\begin{equation}
c_{k+1} - c_{k}
=
d_{k}(c_{k})\,\nu_{C_{k}}(c_{k})
+
t_{k}^{(k+1)}(x_{k})\, n_{\Omega_{k}}(x_{k}),
\label{eq:evolution}
\end{equation}
where \(x_{k} = \Phi_{\Omega_{k}, C_{k}}(c_{k})\).

This identity decomposes the motion into an outward displacement along the normal to \(C_{k}\), followed by a return displacement along the inward normal to \(\Omega_{k}\).

\subsection{Dynamics near tangency points}

The most important dynamical behavior occurs near the tangency set defined in Section~2.

\begin{lemma}[Vanishing of the linear term]
	\label{lem:vanishing-linear}
	Let \(p \in T_{k}\) be a tangency point between \(\partial C_{k}\) and \(\partial C_{k+1}\).
	Let \(s\) be the geodesic coordinate on \(\partial C_{k}\) centered at \(p\).
	Then the transition operator \(G_{k}\) satisfies
	\begin{equation}
	G_{k}^{\prime}(p) = 0,
	\label{eq:vanishing-linear}
	\end{equation}
	where the derivative is taken with respect to the geodesic coordinate.
\end{lemma}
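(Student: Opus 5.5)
The plan is to compute the first-order behaviour of $G_k = \pi_{\Omega_k,C_{k+1}}\circ\Phi_{\Omega_k,C_k}$ at $p$ directly from the evolution identity \eqref{eq:evolution}, using the local graph coordinates \eqref{eq:local-graphs}--\eqref{eq:tangency-conditions} around the common tangent line. Identifying $\partial C_k$ and $\partial C_{k+1}$ near $p$ with the tangent line through $x$, and then with the geodesic coordinate through \eqref{eq:geodesic-expansion} (which is $s = x + O(x^3)$ and so does not change first-order terms), I will write $G_k(s)$ for the displacement of the image point measured from the tangency point. I will then show that every contribution to this displacement that is linear in $s$ vanishes at $p$.

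First I will treat the outward leg. By \eqref{eq:radial-map-formula}, $\Phi_{\Omega_k,C_k}(c) = c + d_k(c)\,\nu_{C_k}(c)$. Differentiating in $s$ at $p$ gives
\[
\frac{d}{ds}\Phi(p) = \tau + d_k(p)\,\kappa_k(p)\,\tau + \big(\partial_s d_k(p)\big)\,\nu_{C_k}(p),
\]
where $\tau$ is the unit tangent. Second, I will treat the return leg $\pi_{\Omega_k,C_{k+1}}$. Its derivative at $x_k=\Phi(p)$ is governed by two things: the angle \eqref{eq:angle-definition} between $\nu_{C_k}(p)$ and $n_{\Omega_k}(x_k)$, and the curvature $\kappa_{k+1}(p)$ of the landing boundary. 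By the geometric normal property and the descent structure of Theorem~\ref{thm:return-map-expansion}, a tangency point is a fixed point of the construction, so $p$ is a critical point of $d_k$. This gives $\partial_s d_k(p)=0$ and $n_{\Omega_k}(x_k) = -\nu_{C_k}(p)$, i.e. $\theta=\pi$. Third, I will combine the two legs in \eqref{eq:evolution}. Because $\nu_{C_k}(p)=\nu_{C_{k+1}}(p)$ by the definition of $T_k$, the first-order terms coming from the outward normal of $C_k$ and from the inward normal of $\Omega_k$ landing on $\partial C_{k+1}$ are collinear and opposite. The tangential parts therefore cancel, and the normal parts vanish because the graphs satisfy $f_k'(0)=f_{k+1}'(0)=0$. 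The linear term in the expansion of $G_k$ is then zero, which is \eqref{eq:vanishing-linear}.

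I expect the main obstacle to be the third step: fixing exactly which quantity has vanishing derivative. A naive chain rule applied to the identity-like map $\partial C_k\to\partial C_{k+1}$ produces a factor of the form $1 + d_k\kappa_k - d_k\kappa_{k+1}+\cdots$, as in Corollary~\ref{cor:second-order-structure}, rather than $0$. I will therefore first pin down the normalisation of the coordinate in which $G_k$ is expressed: the deviation from the tangency point, measured in the graph variable, after subtracting the common identity part. This reparametrization is the one alluded to in Corollary~\ref{cor:second-order-structure}. I will then check that the cancellation genuinely uses only the tangency condition \eqref{eq:tangency-conditions} and the criticality of $d_k$ at $p$. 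If criticality of $d_k$ at $p$ cannot be derived from the standing hypotheses, I will add it as an explicit assumption at tangency points, since without it the term $\partial_s d_k(p)\,\nu_{C_k}(p)$ survives.
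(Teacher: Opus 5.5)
\textbf{This is a genuine gap, and it cannot be closed.} The lemma is false under its stated hypotheses, and your plan does not prove it. You flag the obstacle yourself at the end, but it cannot be normalised away. Under any $C^1$ charts on $\partial C_k$ and $\partial C_{k+1}$ fixing $p$, the derivative of $G_k$ at $p$ is only multiplied by the nonzero derivatives of the charts. So no reparametrisation, including the one invoked in Corollary~\ref{cor:second-order-structure}, turns a nonzero $G_k'(p)$ into $0$. ``Subtracting the common identity part'' instead replaces $G_k$ by the displacement $G_k(s)-s$, which is not the map being iterated. Showing that \emph{its} linear term vanishes means $G_k'(p)=1$, the negation of the lemma.

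\textbf{The return leg.} Your Step~1 formula is correct; the return leg is where the bookkeeping goes wrong. Set $h=d_k(p)$ and assume $d_k'(p)=0$. Then:
\begin{itemize}
\item $\Phi'(p)=(1+h\kappa_k)\tau$;
\item along $\Phi$, the inward normal of $\partial\Omega_k$ turns with velocity $-(1+h\kappa_k)\tau/R_k$, where $R_k$ is the curvature radius of $\partial\Omega_k$ at $x(p)$, positive when $\partial\Omega_k$ bends towards $C_k$;
\item the return ray meets $\partial C_{k+1}$ orthogonally at $p$, so the landing point moves with the tangential part of $\Phi'+h\,\partial_s n_{\Omega_k}$.
\end{itemize}
Hence
\[
G_k'(p)=\bigl(1+d_k(p)\kappa_k(p)\bigr)\Bigl(1-\frac{d_k(p)}{R_k(p)}\Bigr).
\]
Two consequences follow:
\begin{itemize}
\item $\kappa_{k+1}$ does not enter at first order, contrary to your Step~2 and to your factor $1+d_k\kappa_k-d_k\kappa_{k+1}$.
\item The tangential terms $h\kappa_k\tau$ and $-h(1+h\kappa_k)\tau/R_k$ do not cancel merely because $\nu_{C_k}(p)=\nu_{C_{k+1}}(p)$; collinearity of the normals at $p$ is a zeroth-order fact. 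They cancel exactly when $R_k=1/\kappa_k+h$, i.e.\ when $\partial\Omega_k$ is locally parallel to $\partial C_k$, and then $G_k'(p)=1$.
\end{itemize}
Concretely, take $C_k$ the unit disc, $C_{k+1}$ the disc of radius $1/2$ internally tangent to it at $p$, and $\Omega_k$ the concentric disc of radius $1+h$. All standing hypotheses hold, and $G_k(s)=s$ exactly.

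\textbf{Criticality of $d_k$.} Your derivation of $d_k'(p)=0$ from Theorem~\ref{thm:return-map-expansion} does not work. The condition is equivalent to the normal ray at $p$ meeting $\partial\Omega_k$ orthogonally. That is a property of the pair $(C_k,\Omega_k)$, which the theorem links to fixed points of $F_{C_k}$; tangency with $\partial C_{k+1}$ does not constrain $\Omega_k$ at all. Your fallback of assuming it is therefore necessary; the paper's local model assumes it tacitly, cf.\ \eqref{eq:thickness-expansion}.

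\textbf{The missing hypothesis.} Even with criticality, $G_k'(p)=0$ holds if and only if $R_k(p)=d_k(p)$. This is the focal configuration, in which the inward normals of $\partial\Omega_k$ near $x(p)$ converge at $p$; for an arc of $\partial\Omega_k$ centred at $p$, $G_k$ is even locally constant. This focal condition on $\Omega_k$ is the missing ingredient, and no property of $(C_k,C_{k+1})$ replaces it.

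\textbf{The paper's proof does not supply it either.} It asserts that both legs displace horizontally by $O(x^3)$ and concludes $G_k(s)=O(s^2)$. But the radial leg moves linearly, as shown by $X_1=\gamma x+O(x^3)$ in \eqref{eq:X1}. In any case, an $O(x^3)$ total displacement would give $G_k(s)=s+O(s^3)$, i.e.\ derivative $1$ --- exactly where your subtract-the-identity route lands.
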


\begin{proof}
	In a neighborhood of \(p\), we use local coordinates \((x,y)\) such that the
	common tangent line is horizontal and the normal direction is vertical.
	The two boundaries are represented as graphs
	\[
	y = f_{k}(x), \qquad y = f_{k+1}(x),
	\]
	with
	\[
	f_{k}(0) = f_{k+1}(0) = 0, \qquad
	f_{k}'(0) = f_{k+1}'(0) = 0.
	\]
	
	The radial map \(\Phi_{\Omega_{k}, C_{k}}\) sends a point
	\((x, f_{k}(x))\) to
	\[
	(x, f_{k}(x) + d_{k}(x)) + O(x^{3})
	\]
	in the horizontal coordinate, since the normal is vertical only at \(x=0\).
	
	Thus, the horizontal displacement induced by the radial map is of order \(x^{3}\).
	The reciprocal map produces a displacement of the same order due to the tangency.
	
	Therefore, the total horizontal displacement in the composition
	\(G_{k}\) is of order \(x^{3}\).
	
	Passing to the geodesic coordinate \(s\), which satisfies
	\[
	s = x + O(x^{3}),
	\]
	we conclude that
	\[
	G_{k}(s) = O(s^{2}),
	\]
	and hence \(G_{k}'(p) = 0\).
\end{proof}
\section{Quadratic Tangency Law}

We now analyze the behavior of the transition operators near tangency points.

\begin{theorem}[Quadratic tangency law in the plane]
	\label{thm:quadratic-tangency}
	Let \(p \in T_{k}\) be a tangency point between \(\partial C_{k}\) and \(\partial C_{k+1}\).
	Let \(s\) be the geodesic coordinate on \(\partial C_{k}\) centered at \(p\).
	Then the transition operator \(G_{k}\) admits the expansion
	\begin{equation}
	G_{k}(s) = \alpha_{k} s^{2} + o(s^{2}),
	\label{eq:quadratic-law}
	\end{equation}
	and in particular
	\[
	G_k'(p)=0.
	\]
	
	The coefficient \(\alpha_{k}\) is given by
	\begin{equation}
	\alpha_{k}
	=
	\frac{1}{2}
	\left(
	\frac{1}{\kappa_{k}(p)} - \frac{1}{\kappa_{k+1}(p)}
	\right)
	\frac{d_{k}(p)^{2}}{R_{k}(p)}.
	\label{eq:alpha-coefficient}
	\end{equation}
	
	Here:
	\begin{enumerate}
		\item \(\kappa_{k}(p)\) and \(\kappa_{k+1}(p)\) are the curvatures of \(\partial C_{k}\) and \(\partial C_{k+1}\) at \(p\);
		\item \(d_{k}(p)\) is the thickness function defined in~\eqref{eq:thickness-formula};
		\item \(R_{k}(p)\) is the radius of curvature of \(\partial \Omega_{k}\) at the point
		\[
		x(p) = \Phi_{\Omega_{k}, C_{k}}(p).
		\]
	\end{enumerate}
\end{theorem}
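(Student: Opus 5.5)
The plan is to compute $G_k$ in the local graph coordinates of Section~2, carry every expansion to second order in the tangential variable $x$, and only at the end pass to geodesic coordinates using \eqref{eq:geodesic-expansion}. The vanishing of the first-order term, $G_k'(p)=0$, is Lemma~\ref{lem:vanishing-linear}, which I take as given. The new content is therefore the identification of the quadratic coefficient \eqref{eq:alpha-coefficient}.

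\textbf{Step 1 (outward map).} Put $p$ at the origin, with the common tangent horizontal and $\nu_{C_k}(p)=(0,1)$. Parametrize $\partial C_k$ by $c(x)=(x,f_k(x))$, so that
\[
\nu_{C_k}(c(x))=\frac{(-f_k'(x),1)}{\sqrt{1+f_k'(x)^2}}.
\]
Write $d_k(x)=d_k(p)+d_k'(p)x+\tfrac12 d_k''(p)x^2+o(x^2)$. Inserting this and \eqref{eq:taylor-expansion} into \eqref{eq:radial-map-formula} gives
\[
X(x)=x-d_k(x)f_k'(x)+O(x^3)=\bigl(1-d_k(p)\kappa_k\bigr)x-d_k'(p)\kappa_k x^2+o(x^2)
\]
for the horizontal coordinate of $x_k$. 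The vertical coordinate is $Y(x)=d_k(p)+O(x)$.

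\textbf{Step 2 (return map).} Describe $\partial\Omega_k$ near $x(p)=(0,d_k(p))$ by its radius of curvature $R_k=R_k(p)$. By the geometric normal property, the inward normal at $x(p)$ is $(0,-1)$. At the point with horizontal coordinate $X$, the inward normal $n_{\Omega_k}$ is tilted by the angle $X/R_k+O(X^2)$; the sign is fixed by the side on which the centre of curvature lies.

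Next intersect the inward ray $x_k+t\,n_{\Omega_k}(x_k)$ with the graph $y=f_{k+1}(\xi)$. To second order, the hitting parameter is $t=d_k(p)+O(X)$. The landing abscissa is then
\[
\xi=X\Bigl(1\mp\frac{d_k(p)}{R_k}\Bigr)+(\text{quadratic terms in }X).
\]
The quadratic terms come from three sources: the $O(X^2)$ correction to the tilt, the $O(X)$ correction to $t$, and the difference $f_{k+1}-f_k=\tfrac12(\kappa_{k+1}-\kappa_k)\xi^2+o(\xi^2)$ between the two graphs.

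\textbf{Step 3 (collecting terms).} Composing Steps 1 and 2 gives $\xi=\lambda x+\mu x^2+o(x^2)$. Lemma~\ref{lem:vanishing-linear} says $\lambda=0$, so I will use that identity as a relation among $d_k(p)$, $\kappa_k$ and $R_k$ to simplify $\mu$. The aim is to rewrite $\mu$ with the factor $\tfrac12\bigl(\kappa_k^{-1}-\kappa_{k+1}^{-1}\bigr)$, using the curvature gap $\kappa_{k+1}-\kappa_k>0$ from \eqref{eq:curvature-order} together with the relation $\lambda=0$. The remaining factor $d_k(p)^2/R_k$ should come from the tilt correction multiplied by the travel length $t\approx d_k(p)$.

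\textbf{Step 4 (change to geodesic coordinates).} Replace $x$ by $s$ on $\partial C_k$, and $\xi$ by the arclength on $\partial C_{k+1}$. By \eqref{eq:geodesic-expansion} both changes are identities up to $O(x^3)$. So the quadratic coefficient is unchanged and $\alpha_k=\mu$.

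\textbf{Main obstacle.} The delicate part is Step 3: keeping every second-order contribution and checking that they combine exactly into \eqref{eq:alpha-coefficient}. This includes the $d_k'(p)$ term from Step 1, which has to cancel or be absorbed, possibly because $d_k'(p)=0$ when $p$ is a critical point of the thickness function, in line with Theorem~\ref{thm:return-map-expansion}.

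I am not confident that $\lambda=0$ follows from tangency alone. If the relation $1-d_k(p)\kappa_k=0$, or the analogous relation with $R_k$, cannot be extracted from the configuration, I would state it explicitly as the hypothesis under which Lemma~\ref{lem:vanishing-linear} holds. In that case I would redo the bookkeeping in Step 3 under that hypothesis, instead of forcing the coefficient into the displayed form.
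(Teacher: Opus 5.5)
Your route (local graphs, outward map, inward projection, change to arclength) is the same as the paper's. The gap is the one you suspect in Step~3. The identity $\lambda=0$ that you import from Lemma~\ref{lem:vanishing-linear} is false under the stated hypotheses, so Step~3 cannot be completed; as written it only describes what $\mu$ should become, and no second-order computation is done.

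Your Steps~1--2 already determine the linear coefficient. The factor $1-d_k(p)\kappa_k$ comes from the paper's local conventions, in which the graph $y=\frac{\kappa_k}{2}x^2$ has $C_k$ above it, so moving ``up'' means moving inward. With the normal of $C_k$ oriented outward one gets
\[
\lambda=\bigl(1+d_k(p)\kappa_k\bigr)\Bigl(1-\frac{d_k(p)}{R_k}\Bigr),
\]
with $R_k>0$ when the centre of curvature of $\partial\Omega_k$ at $x(p)$ lies on the side of $C_k$. Tangency does not enter $\lambda$: since $f_{k+1}-f_k=O(\xi^2)$, it can only affect the quadratic coefficient.

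Here is an explicit counterexample. Take $C_k=\{|z|\le r\}$ and $C_{k+1}=\{|z-(r-r')|\le r'\}$ with $0<r'<r$, tangent at $p=r$. Take $\Omega_k=\{|z|<r+h\}$, which has the $C_k$-geometric normal property.
\begin{itemize}
\item The point $re^{i\varphi}$ is sent to $(r+h)e^{i\varphi}$.
\item The inward normal ray from there points to $0$ and first meets $\partial C_{k+1}$ at $te^{i\varphi}$, with $t=r-\frac{r(r-r')}{2r'}\varphi^2+O(\varphi^4)$.
\item The arclength of this point from $p$ on $\partial C_{k+1}$ is $r\varphi+O(\varphi^3)$.
\end{itemize}
Hence $G_k(s)=s+O(s^3)$. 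So $G_k'(p)=1$ and there is no quadratic term, whereas \eqref{eq:alpha-coefficient} gives $\tfrac12(r-r')h^2/(r+h)\neq 0$. The statement is therefore not provable as stated.

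The paper's proof has the same hole. The linear term $\gamma x$ of its Step~2 disappears in Step~3 (``solving the projection equation to second order'') without any computation. The argument for Lemma~\ref{lem:vanishing-linear} has two flaws:
\begin{itemize}
\item the radial horizontal displacement is $d_k(p)\kappa_k x$, which is first order, not $O(x^3)$;
\item a small displacement would make $G_k$ close to the identity, not $O(s^2)$.
\end{itemize}

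Your fallback of imposing $\lambda=0$ as a hypothesis is the honest repair, but it will not produce \eqref{eq:alpha-coefficient}. Since $1+d_k(p)\kappa_k>0$, $\lambda=0$ forces $R_k=d_k(p)$: the centre of curvature of $\partial\Omega_k$ at $x(p)$ is $p$ itself. Let $\sigma$ denote arclength on $\partial\Omega_k$ from $x(p)$. The inward normal at $\sigma$ passes through the corresponding evolute point, which lies within $O(\sigma)$ of $p$, displaced along the normal direction, and the normal is tilted by $\sigma/R_k$. It therefore lands at
\[
\xi=\pm\frac{\rho'}{2d_k(p)}\sigma^2+O(\sigma^3),
\]
where $\rho'$ is the arclength derivative of the radius of curvature of $\partial\Omega_k$ at $x(p)$, and $\sigma=(1+d_k(p)\kappa_k)s+O(s^2)$. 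Because $\xi=O(s^2)$, the curvature $\kappa_{k+1}$ enters only at order $s^5$. The $s^2$ coefficient is thus governed by third-order data of $\partial\Omega_k$, not by $\kappa_k^{-1}-\kappa_{k+1}^{-1}$.

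Finally, $G_k(p)=p$ already requires $d_k'(p)=0$; otherwise the inward normal at $x(p)$ is tilted and misses $p$. This is an extra hypothesis. It is not a consequence of the geometric normal property, as your Step~2 claims; the paper assumes it silently by dropping the linear term of $d_k$.
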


\begin{proof}
	We work in local coordinates \((x,y)\) such that the common tangent at \(p\)
	is the \(x\)-axis and the normal direction is vertical.
	
	\medskip
	\noindent
	\textbf{Step 1: Local geometry.}
	The two boundaries are written as graphs
	\begin{equation}
	f_{k}(x) = \frac{\kappa_{k}}{2} x^{2} + O(x^{3}),
	\qquad
	f_{k+1}(x) = \frac{\kappa_{k+1}}{2} x^{2} + O(x^{3}).
	\label{eq:local-boundaries}
	\end{equation}
	
	The thickness function satisfies
	\begin{equation}
	d_{k}(x) = h + \frac{1}{2} d_{k}''(0)x^{2} + O(x^{3}),
	\qquad h = d_{k}(p).
	\label{eq:thickness-expansion}
	\end{equation}
	
	The outer boundary \(\partial \Omega_{k}\) is locally given by
	\begin{equation}
	g_{k}(x) = h + \frac{1}{2R_{k}} x^{2} + O(x^{3}),
	\label{eq:outer-boundary}
	\end{equation}
	where the curvature radius satisfies
	\begin{equation}
	\frac{1}{R_{k}} = \frac{\kappa_{k}}{1 - h\kappa_{k}}
	\quad \Longleftrightarrow \quad
	R_{k} = \frac{1}{\kappa_{k}} - h.
	\label{eq:parallel-surface}
	\end{equation}
	
	\medskip
	\noindent
	\textbf{Step 2: Radial map.}
	The radial map \(\Phi_{\Omega_{k}, C_{k}}\) sends \((x,f_{k}(x))\) to
	\begin{equation}
	X_{1} = x - h\kappa_{k}x + O(x^{3}) = \gamma x + O(x^{3}),
	\qquad \gamma = 1 - h\kappa_{k},
	\label{eq:X1}
	\end{equation}
	\begin{equation}
	Y_{1} = h + \frac{1}{2R_{k}} x^{2} + O(x^{3}).
	\label{eq:Y1}
	\end{equation}
	
	\medskip
	\noindent
	\textbf{Step 3: Projection onto \(\partial C_{k+1}\).}
	The reciprocal map \(\pi_{\Omega_{k}, C_{k+1}}\) projects \((X_{1},Y_{1})\)
	along the inward normal to \(\partial \Omega_{k}\).
	
	Solving the projection equation to second order yields
	\begin{equation}
	X_{2}
	=
	\frac{1}{2}
	\left(
	\frac{1}{\kappa_{k}} - \frac{1}{\kappa_{k+1}}
	\right)
	\frac{h^{2}}{R_{k}}
	x^{2}
	+ O(x^{3}).
	\label{eq:X2}
	\end{equation}
	
	\medskip
	\noindent
	\textbf{Step 4: Change to geodesic coordinate.}
	The geodesic coordinate satisfies
	\begin{equation}
	s = x + O(x^{3}).
	\label{eq:s-x}
	\end{equation}
	
	Substituting into \eqref{eq:X2} gives
	\[
	G_{k}(s) = \alpha_{k} s^{2} + o(s^{2}),
	\]
	with \(\alpha_{k}\) given by~\eqref{eq:alpha-coefficient}.
	
	This proves the result.
\end{proof}

\subsection{Geometric origin of the quadratic dynamics}

The quadratic expansion is a direct consequence of the geometric configuration of the three curves involved: \(\partial C_{k}\), \(\partial C_{k+1}\), and \(\partial \Omega_{k}\).

\paragraph{Tangency and cancellation of the linear term.}
At a tangency point \(p \in T_{k}\), the two boundaries share the same tangent line and normal vector. The outward displacement and inward projection cancel at first order, leading to
\[
G_k'(p)=0.
\]

\paragraph{Curvature mismatch and second-order displacement.}
The difference in curvature \(\kappa_{k+1} - \kappa_{k} > 0\) produces a quadratic correction. The outer boundary contributes an additional second-order term, yielding the coefficient \(\alpha_{k}\).

\begin{remark}[Second-order geometric interpretation]
	The quantity
	\[
	\frac{1}{\kappa_{k}(p)} - \frac{1}{\kappa_{k+1}(p)}
	\]
	is the difference of curvature radii and measures the second-order separation of the two convex boundaries.
	
	The factor \(d_{k}(p)^{2} / R_{k}(p)\) encodes the effect of the excursion through the outer domain. Thus, the coefficient \(\alpha_{k}\) is entirely determined by second-order geometric data.
\end{remark}

\section{Angular Dynamics: A Natural Reformulation}

Instead of tracking the position \(s_{k}\) along the boundary, one can track the angle between the outward normal to \(C_{k}\) and the inward normal to \(\Omega_{k}\) at the intermediate point \(x_{k}\).

\subsection{Definition of the angular variable}

\begin{definition}[Angular variable]
	Let \(c_{k} \in \partial C_{k}\) and let \(x_{k} = \Phi_{\Omega_{k}, C_{k}}(c_{k}) \in \partial \Omega_{k}\).
	We define the angular variable \(\theta_{k}\) as the (unsigned) angle between the two normals:
	\begin{equation}
	\theta_{k}
	=
	\angle\bigl(\nu_{C_{k}}(c_{k}),\, n_{\Omega_{k}}(x_{k})\bigr)
	\in [0,\pi].
	\label{eq:theta-definition}
	\end{equation}
\end{definition}

\begin{remark}
	The use of the unsigned angle avoids orientation-dependent sign conventions.
	In particular,
	\[
	\theta_k = 0 \quad \Longleftrightarrow \quad c_k \in T_k.
	\]
\end{remark}

\subsection{Relation with the geodesic coordinate}

\begin{lemma}[Relation between \(\theta_{k}\) and \(s_{k}\)]
	\label{lem:theta-s-relation}
	In a neighborhood of a tangency point \(p\) with curvature \(\kappa_{k}\), we have
	\begin{equation}
	|\theta_{k}| = 2\kappa_{k} |s_{k}| + o(s_{k}).
	\label{eq:theta-s}
	\end{equation}
\end{lemma}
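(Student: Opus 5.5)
We work in the same local frame as in the proofs of Lemma~\ref{lem:vanishing-linear} and Theorem~\ref{thm:quadratic-tangency}: the common tangent at \(p\) is the \(x\)-axis, the normal is vertical, and \(\partial C_k\) is the graph \(y=f_k(x)=\tfrac{\kappa_k}{2}x^2+O(x^3)\). We then measure all directions by their angle to the vertical. Since \(\theta_k\) is unsigned and equal to \(0\) at \(p\), it suffices to compute the signed angle
\[
\vartheta(s)=\arg\nu_{C_k}(c(s))-\arg\bigl(-n_{\Omega_k}(x(s))\bigr)
\]
to first order in \(s\). Then \(|\theta_k|=|\vartheta'(0)|\,|s|+o(s)\), and the claim reduces to showing \(|\vartheta'(0)|=2\kappa_k\).

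\textbf{Step 1 (the \(C_k\) normal).} The slope of \(\partial C_k\) at \(x\) is \(f_k'(x)=\kappa_k x+O(x^2)\). Hence the outward normal makes the angle \(\arctan f_k'(x)=\kappa_k x+O(x^2)\) with the vertical. By \eqref{eq:geodesic-expansion}, \(x=s+O(s^3)\), so this angle equals \(\kappa_k s+o(s)\). This is the classical fact that the normal turns at rate \(\kappa\) with respect to arclength. The curvature formula \eqref{eq:curvature-conformal} gives the same rate in the conformal picture.

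\textbf{Step 2 (the \(\Omega_k\) normal at \(x_k\)).} By \eqref{eq:X1}, the foot point \(x_k=\Phi_{\Omega_k,C_k}(c_k)\) has horizontal coordinate \(\gamma x+O(x^3)\). The slope of \(\partial\Omega_k\) there, which is \(g_k'\) evaluated at this point, is therefore linear in \(s\). The inward normal \(-n_{\Omega_k}\) is thus rotated from the vertical by an amount \(\lambda s+o(s)\), and \(\vartheta'(0)=\kappa_k-\lambda\) up to orientation. To identify \(\lambda\), I would not rely on the parallel-curve formula \eqref{eq:parallel-surface}; that formula alone would make the two normals coincide and give \(\lambda=\kappa_k\), \(\vartheta'(0)=0\). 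Instead I would use the return-map expansion of Theorem~\ref{thm:return-map-expansion}, which encodes the actual first-order geometry of \(\partial\Omega_k\) relative to \(C_k\). The displacement \(-2d\nabla_{\partial C}d\) produced by going out along \(\nu_{C_k}\) and back along \(n_{\Omega_k}\) equals, to first order, \(d\) times the angle between the two rays. Matching this against the descent law near \(p\) should show that the inward normal of \(\Omega_k\) is the reflection of \(\nu_{C_k}\) across the vertical at linear order, that is, \(\lambda=-\kappa_k\). This gives \(|\vartheta'(0)|=2\kappa_k\).

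\textbf{Step 3 (conclusion).} Combining Steps~1 and~2 gives \(\vartheta(s)=2\kappa_k s+o(s)\). Taking absolute values yields \eqref{eq:theta-s}. This is consistent with \(\theta_k=0\Leftrightarrow c_k\in T_k\) near \(p\), because \(\kappa_k>0\).

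\textbf{Main obstacle.} I expect Step~2 to be the hard part: pinning down the linear rotation rate of \(n_{\Omega_k}\) along the orbit and its sign relative to \(\nu_{C_k}\). The factor \(2\) requires the two rotations to add rather than cancel. This is not forced by the local graph data in Theorem~\ref{thm:quadratic-tangency}, so it has to be extracted from the geometric normal property, via Theorem~\ref{thm:return-map-expansion}, as a reflection-type relation. If that identification fails in general, the honest statement would be \(|\theta_k|=|\kappa_k-\lambda|\,|s_k|+o(s_k)\), and the constant \(2\kappa_k\) would hold exactly under the reflection normalization.
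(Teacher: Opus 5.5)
\textbf{There is a genuine gap in Step~2, and it cannot be closed.} Your Step~1 is fine. In Step~2, however, the reflection relation $\lambda=-\kappa_k$ is asserted (``should show'') rather than derived, and it is false for a general $\Omega_k$.

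Parametrize $\partial\Omega_k$ near $\Phi_{\Omega_k,C_k}(p)$ by $\Phi(s)=c(s)+d_k(s)\,\nu_{C_k}(c(s))$, and write $h=d_k(p)$. For a convex curve with positively oriented unit tangent $T$, one has $\frac{d}{ds}\nu_{C_k}=\kappa_k T$. Hence $\Phi'(s)=(1+\kappa_k d_k)\,T+d_k'\,\nu_{C_k}$, and the angle between $\nu_{C_k}(c)$ and $-n_{\Omega_k}(\Phi(c))$ is exactly $\arctan\bigl(|d_k'(s)|/(1+\kappa_k d_k(s))\bigr)$. Two consequences follow.
\begin{enumerate}
\item Already $\vartheta(0)=0$ requires $d_k'(p)=0$. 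This is a condition on $\Omega_k$, and the tangency of $C_k$ with $C_{k+1}$ does not supply it.
\item Granting $d_k'(p)=0$, one gets $|\vartheta'(0)|=|d_k''(p)|/(1+\kappa_k h)$, which is free second-order data of $\partial\Omega_k$.
\end{enumerate}
Theorem~\ref{thm:return-map-expansion} holds for every $\Omega\in\mathcal{O}_C$ and only ties the return displacement to $d_k\nabla d_k$. ``Matching'' against it can therefore only reproduce a rate proportional to $d_k''(p)$; it can never force that rate to equal $2\kappa_k$.

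Here is a concrete counterexample. Take $C_k$ the unit disk, $\Omega_k$ the concentric disk of radius $1+h$, and $C_{k+1}$ any smaller disk internally tangent to $C_k$ at $p$. Every normal line of $\partial C_k$ is then normal to $\partial\Omega_k$, so your $\vartheta\equiv 0$. The literal $\theta_k$, measured against the inward normal $n_{\Omega_k}$, is identically $\pi$. The lemma instead predicts $2|s_k|+o(s_k)$. In fact the identity holds exactly when $d_k'(p)=0$ and $|d_k''(p)|=2\kappa_k(1+\kappa_k h)$. Your fallback $|\theta_k|=|\kappa_k-\lambda|\,|s_k|+o(s_k)$ is the correct general form, with $|\kappa_k-\lambda|=|d_k''(p)|/(1+\kappa_k h)$.

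\textbf{Comparison with the paper's route.} The paper does not go through Theorem~\ref{thm:return-map-expansion}. It uses precisely the parallel-curve relation \eqref{eq:parallel-surface} that you set aside. It writes $\nu_{C_k}\approx(0,1)+(-\kappa_k x,0)$ and $n_{\Omega_k}\approx(0,-1)+(x/R_k,0)$, with $x/R_k=\kappa_k s_k$ at the foot point. It then takes the angle to be the absolute difference of the horizontal components, $|{-\kappa_k s_k}-\kappa_k s_k|=2\kappa_k|s_k|$. But these two vectors are, to first order, negatives of each other. The difference-of-components rule is valid only for nearly parallel unit vectors; applied correctly to $\nu_{C_k}$ and $-n_{\Omega_k}$, it gives $|{-\kappa_k s_k}+\kappa_k s_k|=0$. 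This is exactly your remark that \eqref{eq:parallel-surface} makes the normals coincide, and it is where the paper's factor $2$ comes from. So neither route produces $2\kappa_k$. What is missing is not a lemma but an extra hypothesis on the second-order geometry of $\partial\Omega_k$ at $\Phi_{\Omega_k,C_k}(p)$. The statement should be formulated under such a hypothesis, and so should the coefficient $\beta_k$ of Theorem~\ref{thm:angular-dynamics}, which inherits the constant $2\kappa_k$.
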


\begin{proof}
	We work in local coordinates where the common tangent at \(p\) is horizontal.
	
	The outward normal to \(\partial C_k\) admits the expansion
	\[
	\nu_{C_k}(x) = (0,1) + (-\kappa_k x, 0) + O(x^2),
	\]
	while the inward normal to \(\partial \Omega_k\) satisfies
	\[
	n_{\Omega_k}(x) = (0,-1) + \left(\frac{x}{R_k}, 0\right) + O(x^2).
	\]
	
	The angle between the two vectors is, to first order, proportional to the magnitude
	of the difference of their horizontal components. Thus,
	\begin{equation}
	|\theta_k|
	=
	\left|\kappa_k x + \frac{x}{R_k}\right| + O(x^2).
	\label{eq:theta-x}
	\end{equation}
	
	Using the relation between Euclidean and geodesic coordinates,
	\[
	x = \gamma s_k + O(s_k^2), \qquad \gamma = 1 - h\kappa_k,
	\]
	together with
	\[
	R_k = \frac{\gamma}{\kappa_k},
	\]
	we obtain
	\[
	\frac{x}{R_k} = \kappa_k s_k + O(s_k^2).
	\]
	
	Substituting into~\eqref{eq:theta-x} yields
	\[
	|\theta_k| = 2\kappa_k |s_k| + o(s_k),
	\]
	which proves the result.
\end{proof}

\subsection{Quadratic angular dynamics}

\begin{theorem}[Quadratic angular dynamics]
	\label{thm:angular-dynamics}
	Under the same hypotheses as Theorem~\ref{thm:quadratic-tangency}, the angular variable \(\theta_{k}\) satisfies
	\begin{equation}
	|\theta_{k+1}|
	=
	\beta_{k}\,|\theta_{k}|^{2}
	+ o(\theta_{k}^{2}),
	\label{eq:theta-dynamics}
	\end{equation}
	with
	\begin{equation}
	\beta_{k}
	=
	\frac{\kappa_{k+1}}{4\kappa_{k}^{2}}
	\left(
	\frac{1}{\kappa_{k}} - \frac{1}{\kappa_{k+1}}
	\right)
	\frac{d_{k}(p)^{2}}{R_{k}(p)}.
	\label{eq:beta-coefficient}
	\end{equation}
\end{theorem}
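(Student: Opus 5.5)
The plan is to obtain the angular law by conjugating the positional quadratic law of Theorem~\ref{thm:quadratic-tangency} with the linear relation of Lemma~\ref{lem:theta-s-relation}, applied at two consecutive levels. No new geometric computation is needed. The whole argument is a change of variables in the recurrence $s_{k+1}=G_k(s_k)$, plus bookkeeping of the $o(\cdot)$ terms.

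\textbf{Step 1.} At level $k$, Lemma~\ref{lem:theta-s-relation} with curvature $\kappa_k=\kappa_k(p)$ gives
\[
|\theta_k| = 2\kappa_k|s_k| + o(s_k).
\]
Inverting this yields $|s_k| = |\theta_k|/(2\kappa_k) + o(\theta_k)$. The inversion is legitimate because $\kappa_k>0$, so $\theta_k$ and $s_k$ are comparable near $p$.

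\textbf{Step 2.} Apply Theorem~\ref{thm:quadratic-tangency} to get $s_{k+1} = \alpha_k s_k^2 + o(s_k^2)$. Substituting Step 1 gives
\[
s_{k+1} = \frac{\alpha_k}{4\kappa_k^2}\,\theta_k^2 + o(\theta_k^2).
\]

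\textbf{Step 3.} Apply Lemma~\ref{lem:theta-s-relation} at level $k+1$. The point $c_{k+1}$ lies on $\partial C_{k+1}$ near $p$, and the relevant local curvature is $\kappa_{k+1}(p)$. This gives $|\theta_{k+1}| = 2\kappa_{k+1}|s_{k+1}| + o(s_{k+1})$, hence
\[
|\theta_{k+1}| = \frac{2\kappa_{k+1}\alpha_k}{4\kappa_k^2}\,\theta_k^2 + o(\theta_k^2).
\]
Inserting formula~\eqref{eq:alpha-coefficient} for $\alpha_k$ gives $\frac{\kappa_{k+1}}{4\kappa_k^2}\bigl(\frac{1}{\kappa_k}-\frac{1}{\kappa_{k+1}}\bigr)\frac{d_k(p)^2}{R_k(p)}$, which is exactly $\beta_k$ in \eqref{eq:beta-coefficient}.

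\textbf{Main obstacle: Step 3.} Using Lemma~\ref{lem:theta-s-relation} at level $k+1$ silently assumes two things.

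First, the geodesic coordinate $s_{k+1}$ on $\partial C_{k+1}$ must be centered at a point where the level-$(k+1)$ configuration is tangent, namely $p$ itself, with $\Omega_{k+1}$ playing the role of $\Omega_k$. One also needs $s_{k+1}$ to agree to first order with the output coordinate of $G_k$. That output was expressed through the horizontal coordinate $X_2$ in the common tangent frame at $p$. Since $\partial C_{k+1}$ is tangent to the $x$-axis at $p$, arclength on $\partial C_{k+1}$ equals $X_2+O(X_2^3)$, so this identification costs only higher-order terms.

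Second, the hypothesis that $p\in T_{k+1}$ (or that $p$ is a tangency point of the next pair) has to be added or read as part of ``the same hypotheses''.

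Finally, the error terms must be checked to compose. An $o(s_{k+1})$ error is $o(\theta_k^2)$ because $s_{k+1}=O(\theta_k^2)$. The $o(s_k)$ error in Step 1 only produces an $o(\theta_k^2)$ correction after squaring. So the remainder is $o(\theta_k^2)$ as claimed.
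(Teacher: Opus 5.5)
Your proposal is correct and is essentially the paper's own proof: invert Lemma~\ref{lem:theta-s-relation} at level $k$, substitute into the quadratic law of Theorem~\ref{thm:quadratic-tangency}, and reapply the lemma at level $k+1$ to get $|\theta_{k+1}| = \frac{\kappa_{k+1}}{2\kappa_k^2}\,\alpha_k\,\theta_k^2 + o(\theta_k^2)$. Your constant bookkeeping is actually cleaner than the paper's, which writes the intermediate factor as $\kappa_{k+1}/(4\kappa_k^2)$ before arriving at the same final $\beta_k$. Your remark that $p\in T_{k+1}$ must be assumed in order to apply the lemma at the next level also makes explicit a hypothesis the paper uses silently.
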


\begin{proof}
	From Lemma~\ref{lem:theta-s-relation}, we have
	\begin{equation}
	|s_k|
	=
	\frac{|\theta_k|}{2\kappa_k}
	+ o(\theta_k).
	\label{eq:s-theta}
	\end{equation}
	
	Substituting into the quadratic tangency law gives
	\[
	|s_{k+1}|
	=
	\alpha_k \frac{|\theta_k|^2}{4\kappa_k^2}
	+ o(\theta_k^2).
	\]
	
	Using again Lemma~\ref{lem:theta-s-relation} at level \(k+1\),
	\[
	|\theta_{k+1}|
	=
	2\kappa_{k+1} |s_{k+1}|
	+ o(s_{k+1}),
	\]
	we obtain
	\[
	|\theta_{k+1}|
	=
	2\kappa_{k+1}
	\left(
	\alpha_k \frac{|\theta_k|^2}{4\kappa_k^2}
	\right)
	+ o(\theta_k^2).
	\]
	
	Thus
	\[
	|\theta_{k+1}|
	=
	\beta_k |\theta_k|^2 + o(\theta_k^2),
	\]
	with
	\[
	\beta_k = \frac{\kappa_{k+1}}{4\kappa_k^2} \alpha_k.
	\]
	
	Substituting \(\alpha_k\) yields~\eqref{eq:beta-coefficient}.
\end{proof}

\subsection{Interpretation}

\begin{remark}[Geometric meaning]
	The angular variable \(\theta_k\) provides an intrinsic measure of deviation from tangency:
	\[
	|\theta_k| = 0 \quad \Longleftrightarrow \quad c_k \in T_k.
	\]
	
	The quadratic law~\eqref{eq:theta-dynamics} shows that the contraction mechanism
	persists in this intrinsic coordinate.
\end{remark}

\begin{remark}[Relation with conformal viewpoint]
	The angular formulation naturally connects with the complex-analytic framework
	of Section~4, where angles correspond to arguments of complex derivatives.
	
	Thus, the quadratic tangency mechanism admits a natural interpretation
	in terms of conformal geometry.
\end{remark}

\section{Local Contraction Near Tangency Points}

We now show that the quadratic behavior established in Theorem~\ref{thm:quadratic-tangency}
implies that the transition operators are locally contracting near tangency points.

\begin{lemma}[Local contraction under uniform quadratic control]
	\label{lem:local-contraction}
	Let \(p \in T_{k}\) be a tangency point. Assume that there exists a neighborhood \(U\) of \(p\)
	and a constant \(A_{k} > 0\) such that
	\begin{equation}
	|G_k(s)| \le A_k |s|^2
	\label{eq:uniform-quadratic}
	\end{equation}
	for all \(s\) sufficiently small.
	
	Then there exists a neighborhood \(U' \subset U\) of \(p\)
	and a constant \(0 < \lambda_{k} < 1\) such that for every \(c \in U'\),
	\begin{equation}
	d_{\partial C_{k+1}}(G_{k}(c), p)
	\leq
	\lambda_{k}\, d_{\partial C_{k}}(c, p).
	\label{eq:local-contraction}
	\end{equation}
\end{lemma}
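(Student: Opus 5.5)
The approach is to read \eqref{eq:uniform-quadratic} in the geodesic coordinates already used in Theorem~\ref{thm:quadratic-tangency}. On \(\partial C_k\) take the coordinate \(s\) centered at \(p\), and on \(\partial C_{k+1}\) take the geodesic coordinate \(\sigma\), also centered at \(p\); this is possible because \(p \in T_k \subset \partial C_k \cap \partial C_{k+1}\). Near \(p\), geodesic distance along a \(C^3\) curve agrees with the absolute value of the arclength coordinate. So for \(c\) in a small arc around \(p\) we have \(d_{\partial C_k}(c,p) = |s(c)|\), and likewise \(d_{\partial C_{k+1}}(G_k(c),p) = |\sigma(G_k(c))|\).

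The hypothesis \eqref{eq:uniform-quadratic} is understood as \(|\sigma(G_k(c))| \le A_k |s(c)|^2\). This is exactly the meaning of \(G_k(s)\) in Theorem~\ref{thm:quadratic-tangency}, where the output is measured from \(p\) along \(\partial C_{k+1}\). In particular \(G_k(p) = p\).

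Next we choose the radius. Let \(\delta_0>0\) be such that \eqref{eq:uniform-quadratic} holds for \(|s| < \delta_0\), and such that the arc \(\{|s|<\delta_0\}\) lies in \(U\). Also require that on this arc the identification of geodesic distance with \(|s|\) is valid. To keep that identification valid on the image side, we need \(G_k(c)\) to stay in a fixed arc of \(\partial C_{k+1}\) of length less than half the perimeter. This is automatic once \(A_k|s|^2\) is small.

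Fix any target ratio \(\lambda_k \in (0,1)\), for instance \(\lambda_k = 1/2\). Set
\[
\delta = \min\Bigl(\delta_0,\ \frac{\lambda_k}{A_k}\Bigr), \qquad U' = \{c \in \partial C_k : |s(c)| < \delta\}.
\]
Then for \(c \in U'\) we get
\[
d_{\partial C_{k+1}}(G_k(c),p) \le A_k |s|^2 = (A_k|s|)\,|s| \le \lambda_k |s| = \lambda_k\, d_{\partial C_k}(c,p),
\]
which is \eqref{eq:local-contraction}. The estimate also gives an arbitrarily small ratio \(A_k\delta\) as \(\delta \to 0\), reflecting the super-linear contraction already visible in Theorem~\ref{thm:toy-model}.

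The one delicate point is bookkeeping rather than analysis. We must ensure that the quantity bounded in \eqref{eq:uniform-quadratic} really is the intrinsic distance to \(p\) on \(\partial C_{k+1}\), and not a horizontal coordinate \(X_2\) as in Step~3 of the proof of Theorem~\ref{thm:quadratic-tangency}. If only a bound on \(X_2\) is available, we pass to \(\sigma\) using \(\sigma = X_2 + O(X_2^3)\), the analogue of \eqref{eq:geodesic-expansion} for \(\partial C_{k+1}\). This gives \(|\sigma| \le 2|X_2|\) for small \(X_2\), so we simply replace \(A_k\) by \(2A_k\) and shrink \(\delta_0\) accordingly. The argument is otherwise unchanged.
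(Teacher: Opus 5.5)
Your proof is correct and is essentially the paper's argument: restrict to an arc $|s|<r$ with $A_k r<1$ so that $A_k|s|^2\le (A_k r)\,|s|$, then identify geodesic distance with $|s|$ near $p$. The only difference is order: the paper chooses $r$ and sets $\lambda_k=A_k r$, while you fix $\lambda_k$ and take $\delta=\min(\delta_0,\lambda_k/A_k)$; your extra bookkeeping ($U'\subset U$, passing from $X_2$ to arclength on $\partial C_{k+1}$) just makes explicit what the paper compresses into ``the geodesic distance is equivalent to $|s_k|$ near $p$''.
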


\begin{proof}
	Let \(s_{k}\) denote the geodesic coordinate of \(c\) centered at \(p\).
	By Theorem~\ref{thm:quadratic-tangency}, we have
	\[
	s_{k+1} = \alpha_{k} s_{k}^{2} + o(s_{k}^{2}),
	\]
	which implies the estimate~\eqref{eq:uniform-quadratic} for sufficiently small \(s_k\).
	
	Thus, for \(s_k\) small enough,
	\[
	|s_{k+1}| \le A_k |s_k|^2.
	\]
	
	Choose \(r > 0\) such that \(A_k r < 1\), and restrict to
	\[
	U' = \{ c \in \partial C_{k} : |s_k| < r \}.
	\]
	
	Then for \(c \in U'\),
	\[
	|s_{k+1}| \le A_k |s_k|^2 \le (A_k r)\, |s_k|.
	\]
	
	Setting \(\lambda_k = A_k r < 1\), we obtain
	\[
	|s_{k+1}| \le \lambda_k |s_k|.
	\]
	
	Since the geodesic distance is equivalent to \(|s_k|\) near \(p\), this yields~\eqref{eq:local-contraction}.
\end{proof}

\medskip

The contraction property is a direct consequence of the quadratic tangency law:
the cancellation of the linear term forces the dynamics to decrease distances faster than linearly near the tangency set.

\begin{lemma}[Thickness as a Lyapunov function under regularity assumptions]
	\label{lem:lyapunov-thickness}
	Assume that the thickness functions \(d_k\) are \(C^2\) in a neighborhood of the tangency point \(p\),
	with \(d_k'(p) = 0\) and \(d_k''(p) > 0\).
	
	Let \(c_{k} \in \partial C_{k}\) and \(c_{k+1} = G_{k}(c_{k})\).
	Then, for \(c_k\) sufficiently close to \(p\),
	\begin{equation}
	d_{k+1}(c_{k+1}) \leq d_{k}(c_{k}),
	\label{eq:lyapunov}
	\end{equation}
	with strict inequality unless \(c_{k} = p\).
\end{lemma}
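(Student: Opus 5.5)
The strategy is to compare $d_{k+1}(c_{k+1})$ and $d_k(c_k)$ through the common base point $p$. We combine the quadratic tangency law (Theorem~\ref{thm:quadratic-tangency}) with second-order Taylor expansions of the two thickness functions at $p$, and then use the nesting $C_{k+1}\subset C_k$, $\Omega_k\subset\Omega_{k+1}$ to compare the constant terms. We work in the geodesic coordinates $s_k$ on $\partial C_k$ and $s_{k+1}$ on $\partial C_{k+1}$, both centered at $p$. The hypotheses $d_k'(p)=0$ and $d_k''(p)>0$ are assumed at both levels $k$ and $k+1$, since the statement is made for all $k$.

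\textbf{Step 1 (expansion at level $k$).} By the $C^2$ assumption, $d_k(c_k)=d_k(p)+\tfrac12 d_k''(p)s_k^2+o(s_k^2)$. Since $d_k''(p)>0$, this gives $d_k(c_k)-d_k(p)\ge \mu_k s_k^2$ for some $\mu_k>0$ and all small $s_k$.

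\textbf{Step 2 (expansion at level $k+1$).} By Theorem~\ref{thm:quadratic-tangency}, $s_{k+1}=\alpha_k s_k^2+o(s_k^2)$. Hence
\[
d_{k+1}(c_{k+1})=d_{k+1}(p)+\tfrac12 d_{k+1}''(p)s_{k+1}^2+o(s_{k+1}^2)=d_{k+1}(p)+O(s_k^4).
\]
Therefore
\[
d_{k+1}(c_{k+1})-d_k(c_k)=\bigl(d_{k+1}(p)-d_k(p)\bigr)-\tfrac12 d_k''(p)s_k^2+o(s_k^2).
\]
If we know that $d_{k+1}(p)\le d_k(p)$, then for $s_k\neq 0$ small the right-hand side is at most $-\tfrac{\mu_k}{2}s_k^2<0$. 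This gives the inequality with strict inequality for $c_k\neq p$.

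\textbf{Step 3 (comparison at the tangency point).} This is the main obstacle: proving $d_{k+1}(p)\le d_k(p)$, with equality when $c_k=p$ so that the non-strict inequality holds at $p$. The point $p$ lies on both $\partial C_k$ and $\partial C_{k+1}$ with the same outward normal, so both thicknesses are measured along the same ray from $p$. Along that ray, $d_k(p)$ is the exit distance from $\Omega_k$ and $d_{k+1}(p)$ is the exit distance from $\Omega_{k+1}$. Since $\Omega_k\subset\Omega_{k+1}$, monotonicity naively gives $d_{k+1}(p)\ge d_k(p)$, which is the wrong direction.

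I would first try to resolve this by interpreting the Lyapunov statement with the thickness at level $k+1$ measured relative to the pair $(C_{k+1},\Omega_k)$, i.e.\ the domain actually traversed by the next excursion in \eqref{eq:iteration}. With that reading the base values coincide, $d_{k+1}(p)=d_k(p)$, because the ray and the domain are the same. Step 2 then gives the strict decrease $d_{k+1}(c_{k+1})=d_k(c_k)-\tfrac12 d_k''(p)s_k^2+o(s_k^2)$.

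If instead $d_{k+1}$ must be relative to $\Omega_{k+1}$, I would add the hypothesis that the outer domains are stationary to first order along the tangency ray, $d_{k+1}(p)\le d_k(p)$. This holds, for example, if $\Omega_{k+1}$ and $\Omega_k$ share the exit point $\Phi(p)$, as in the limiting regime of Lemma~\ref{lem:GNP-stability}. I would state explicitly that the lemma requires such a compatibility condition, since otherwise the inequality can fail at $c_k=p$ itself.
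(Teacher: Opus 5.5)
Your Steps 1--2 are the paper's proof. You expand $d_k(c_k)$ to second order at $p$, use $s_{k+1}=O(s_k^2)$ from Theorem~\ref{thm:quadratic-tangency} to make the level-$(k+1)$ deviation $O(s_k^4)$, and let the positive term $\tfrac12 d_k''(p)s_k^2$ dominate. The obstacle you isolate in Step 3 is real, and the paper does not resolve it. Its proof writes the expansion of $d_{k+1}$ at $p$ as
\[
d_{k+1}(c_{k+1}) = d_k(p) + d_k'(p)\,s_{k+1} + \tfrac12 d_k''(p)\,s_{k+1}^2 + o(s_{k+1}^2).
\]
That is, it silently sets $d_{k+1}(p)=d_k(p)$ and borrows the level-$k$ derivatives for $d_{k+1}$. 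Your monotonicity remark is correct: both thicknesses at $p$ are measured along the same ray, so $\Omega_k\subset\Omega_{k+1}$ gives $d_{k+1}(p)\ge d_k(p)$. Since $G_k(p)=p$, the asserted inequality at $c_k=p$ therefore amounts to $d_{k+1}(p)=d_k(p)$. Nothing in the lemma or in the data of Section~2 enforces this. So the statement fails at $p$, and by continuity near $p$, whenever $\partial\Omega_{k+1}$ crosses the tangency ray strictly beyond $\partial\Omega_k$. You have not missed an idea; an extra hypothesis is unavoidable, and with it your argument is complete.

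Of your two repairs, use the second. The justification of the first misreads \eqref{eq:iteration}. The excursion starting from $c_{k+1}$ is $\Phi_{\Omega_{k+1},C_{k+1}}$, taken in $\Omega_{k+1}$; $\Omega_k$ only enters through the return leg $\pi_{\Omega_k,C_{k+1}}$. The thickness of $C_{k+1}$ inside $\Omega_k$ is a reasonable Lyapunov candidate in its own right, but it is not the lemma's $d_{k+1}$.

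The clean fix is the hypothesis $d_{k+1}(p)=d_k(p)$: $\partial\Omega_k$ and $\partial\Omega_{k+1}$ meet the common normal ray at $p$ in the same point $x$. Given the nesting, your ``$\le$'' is equivalent to this. It also removes another tacit assumption in your Step 2. Since $p$ need not belong to $T_{k+1}$, the level-$(k+1)$ hypothesis $d_{k+1}'(p)=0$ is not covered by ``the statement is made for all $k$''. Under the compatibility hypothesis it holds automatically. Nested domains whose $C^1$ boundaries touch at $x$ are tangent there. The ray is orthogonal to $\partial\Omega_k$ at $x$ because $d_k'(p)=0$, hence also orthogonal to $\partial\Omega_{k+1}$, which is exactly $d_{k+1}'(p)=0$.
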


\begin{proof}
	Using the expansion of the dynamics near tangency, we have
	\[
	|s_{k+1}| = O(s_k^2).
	\]
	
	A Taylor expansion of \(d_{k+1}\) at \(p\) yields
	\[
	d_{k+1}(c_{k+1})
	=
	d_k(p)
	+ d'_k(p)\, s_{k+1}
	+ \frac{1}{2} d''_k(p)\, s_{k+1}^2
	+ o(s_{k+1}^2).
	\]
	
	Since \(d'_k(p) = 0\), we obtain
	\[
	d_{k+1}(c_{k+1}) = d_k(p) + O(s_{k+1}^2).
	\]
	
	On the other hand,
	\[
	d_k(c_k) = d_k(p) + \frac{1}{2} d''_k(p) s_k^2 + o(s_k^2).
	\]
	
	Using \(s_{k+1} = O(s_k^2)\), we obtain
	\[
	d_{k+1}(c_{k+1}) = d_k(p) + O(s_k^4),
	\]
	while
	\[
	d_k(c_k) = d_k(p) + C s_k^2 + o(s_k^2), \quad C>0.
	\]
	
	Hence,
	\[
	d_{k+1}(c_{k+1}) \le d_k(c_k),
	\]
	with strict inequality unless \(s_k = 0\).
\end{proof}

\begin{remark}[Lyapunov interpretation]
	\label{rem:lyapunov}
	Under the above regularity assumptions, the thickness function acts as a Lyapunov function
	for the non-autonomous dynamics generated by the transition operators \(G_k\).
	
	Along any trajectory, \(d_k(c_k)\) decreases unless the orbit has reached the tangency set.
	Thus, the dynamics can be interpreted as a geometric descent process toward tangency points.
\end{remark}

\section{Global Convergence of the Dynamics}

The local contraction estimates established in Section~8 can be used in two complementary ways. First, under a uniform contraction hypothesis, one obtains a classical linear convergence result. Second, and more intrinsically, the quadratic structure of the dynamics near tangency points yields convergence without requiring a uniform linear contraction constant.

\begin{theorem}[Convergence under uniform contraction]
	\label{thm:uniform-contraction}
	Assume that there exists a neighborhood \(U\) of a tangency point \(p \in T_k\) and a constant \(0 < \lambda < 1\) such that for every \(k\) and every \(c \in U \cap \partial C_{k}\),
	\begin{equation}
	d_{\partial C_{k+1}}(G_{k}(c), p)
	\leq
	\lambda\, d_{\partial C_{k}}(c, p).
	\label{eq:uniform-contraction}
	\end{equation}
	
	Then every orbit starting in \(U\) converges to \(p\). More precisely, if \(c_{0} \in U\), then
	\[
	d(c_{k}, p) \leq \lambda^{k} d(c_{0}, p),
	\]
	and hence \(c_{k} \to p\).
\end{theorem}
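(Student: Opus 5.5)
The plan is a straightforward induction on \(k\). The only subtle point is that the contraction hypothesis~\eqref{eq:uniform-contraction} applies only to points of \(U\), so we must check that the orbit never leaves \(U\). To handle this cleanly, I would first note that the geodesic distances \(d_{\partial C_k}(\cdot,p)\) are all being measured to the same point \(p\). Here \(p\) is assumed to lie on every \(\partial C_k\), which is implicit in the statement since the hypothesis is formulated for every \(k\). I would then interpret \(U\) as containing a geodesic ball \(B_k(p,\rho)=\{c\in\partial C_k : d_{\partial C_k}(c,p)<\rho\}\) for every \(k\), and take \(c_0\in B_0(p,\rho)\). If \(U\) is literally a planar neighborhood, one chooses \(\rho\) small enough that these boundary balls lie in \(U\). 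This is possible because the curves \(\partial C_k\) are nested convex curves through \(p\), so geodesic distance along each of them dominates Euclidean distance.

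\textbf{Induction.} The claim to prove for all \(k\) is
\[
c_k\in U\cap\partial C_k \quad\text{and}\quad d_{\partial C_k}(c_k,p)\le \lambda^k\, d_{\partial C_0}(c_0,p).
\]
For \(k=0\) this is trivial. Assume it holds at rank \(k\). Since \(c_k\in U\cap\partial C_k\), hypothesis~\eqref{eq:uniform-contraction} applies to \(c_{k+1}=G_k(c_k)\), giving
\[
d_{\partial C_{k+1}}(c_{k+1},p)\le \lambda\, d_{\partial C_k}(c_k,p)\le \lambda^{k+1} d_{\partial C_0}(c_0,p).
\]
In particular \(d_{\partial C_{k+1}}(c_{k+1},p)<\rho\), because \(\lambda<1\) and \(d_{\partial C_0}(c_0,p)<\rho\). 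Hence \(c_{k+1}\in B_{k+1}(p,\rho)\subset U\), which closes the induction.

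\textbf{Conclusion.} Since \(\lambda^k\to 0\), the geodesic distances tend to zero. Euclidean distance is bounded by geodesic distance, so \(|c_k-p|\to 0\), that is, \(c_k\to p\).

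The main obstacle is the invariance step, namely that the orbit stays in the region where the contraction is valid. This is a matter of formulating \(U\) correctly rather than a real difficulty. The monotone decrease of the distance to \(p\) is exactly what keeps the orbit inside a ball centered at \(p\), so choosing \(U\) to contain such balls resolves it.
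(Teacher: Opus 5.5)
Your proof is correct. The paper states this theorem without proof. The intended argument is evidently the one-line iteration $d(c_k,p)\le\lambda\, d(c_{k-1},p)\le\cdots\le\lambda^k d(c_0,p)$, which is your induction.

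The invariance step you add is a genuine correction, not pedantry. The hypothesis says nothing about $G_k$ outside $U$. A decrease of the geodesic distance to $p$ does not keep $c_{k+1}$ inside an arbitrarily shaped neighborhood $U$. So the literal claim for \emph{every} $c_0\in U$ is not justified by the stated hypothesis. Restricting to $c_0$ in a geodesic ball $B_0(p,\rho)$ with $B_k(p,\rho)\subset U$ for all $k$ is the right formulation; equivalently, take $U\cap\partial C_k$ to be such balls.

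Two minor remarks:
\begin{itemize}
\item The comparison $|c-p|\le d_{\partial C_k}(c,p)$ is just chord $\le$ arc, which holds for any rectifiable curve. Convexity and nesting play no role there.
\item As you note, the statement only makes sense if $p\in\partial C_k$ for every $k$, and if $d$ in the conclusion is read as the geodesic distance on the respective boundary.
\end{itemize}
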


\begin{theorem}[Uniform bounds on quadratic coefficients]
	\label{thm:alpha-bounds}
	Assume that:
	\begin{enumerate}
		\item The curvatures satisfy
		\[
		0 < \kappa_{\min} \leq \kappa_{k}(p), \kappa_{k+1}(p) \leq \kappa_{\max},
		\]
		and there exists \(\delta > 0\) such that
		\[
		\kappa_{k+1}(p) - \kappa_{k}(p) \geq \delta.
		\]
		\item The thickness satisfies
		\[
		0 < d_{\min} \leq d_{k}(p) \leq d_{\max}.
		\]
		\item The curvature radii satisfy
		\[
		0 < R_{\min} \leq R_{k}(p) \leq R_{\max}.
		\]
	\end{enumerate}
	
	Then the quadratic coefficients \(\alpha_{k}\) defined in~\eqref{eq:alpha-coefficient} satisfy
	\[
	0 < \alpha_{\min} \leq \alpha_{k} \leq \alpha_{\max},
	\]
	where the constants depend only on the above bounds.
\end{theorem}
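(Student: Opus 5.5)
The plan is to bound each factor of the explicit formula \eqref{eq:alpha-coefficient} separately and multiply the bounds. Write
\[
\alpha_k=\frac12\,\Bigl(\frac{1}{\kappa_k(p)}-\frac{1}{\kappa_{k+1}(p)}\Bigr)\,\frac{d_k(p)^2}{R_k(p)},
\]
and note that every factor is positive: $d_k(p)\ge d_{\min}>0$ and $R_k(p)\ge R_{\min}>0$ by assumptions~2 and~3. The curvature difference is positive because $\kappa_{k+1}(p)-\kappa_k(p)\ge\delta>0$, which is consistent with \eqref{eq:curvature-order}.

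First, rewrite the difference of curvature radii as
\[
\frac{1}{\kappa_k}-\frac{1}{\kappa_{k+1}}=\frac{\kappa_{k+1}-\kappa_k}{\kappa_k\kappa_{k+1}}.
\]
For the lower bound, use $\kappa_{k+1}-\kappa_k\ge\delta$ in the numerator and $\kappa_k\kappa_{k+1}\le\kappa_{\max}^2$ in the denominator, which gives the quotient at least $\delta/\kappa_{\max}^2$. For the upper bound, use $\kappa_{k+1}-\kappa_k\le \kappa_{\max}-\kappa_{\min}$ and $\kappa_k\kappa_{k+1}\ge\kappa_{\min}^2$. Alternatively, one can simply drop the subtracted term, giving $1/\kappa_k-1/\kappa_{k+1}<1/\kappa_{\min}$.

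Second, bound the thickness factor: $d_{\min}^2/R_{\max}\le d_k(p)^2/R_k(p)\le d_{\max}^2/R_{\min}$.

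Multiplying the two estimates yields the constants
\[
\alpha_{\min}=\frac{\delta\, d_{\min}^2}{2\kappa_{\max}^2R_{\max}},\qquad
\alpha_{\max}=\frac{(\kappa_{\max}-\kappa_{\min})\,d_{\max}^2}{2\kappa_{\min}^2R_{\min}}.
\]
Both depend only on the assumed bounds, and $\alpha_{\min}>0$ because $\delta,d_{\min}>0$ and $R_{\max}<\infty$. This completes the argument.

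The only point needing care is the lower bound, since it is the one that genuinely uses the uniform gap $\delta$. Without $\delta$, the hypothesis $\kappa_k<\kappa_{k+1}$ alone would let $\alpha_k$ degenerate to $0$ along the sequence. Everything else is routine monotonicity of products and quotients of positive quantities.

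One might also observe that \eqref{eq:parallel-surface} forces $R_k=1/\kappa_k-h$, so assumption~3 is partly redundant given assumptions~1 and~2. However, I would not rely on this identity, because it implicitly requires $h\kappa_k<1$. Instead I would use assumption~3 directly as stated.
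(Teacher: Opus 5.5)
Your proposal is correct and follows essentially the same route as the paper: rewrite the difference of curvature radii as $(\kappa_{k+1}-\kappa_k)/(\kappa_k\kappa_{k+1})$, bound it below by $\delta/\kappa_{\max}^2$ and the factor $d_k^2/R_k$ below by $d_{\min}^2/R_{\max}$, and obtain the upper bound analogously from the remaining bounds. The only difference is that you write out the upper constant $\alpha_{\max}$ explicitly, whereas the paper just says it ``follows similarly.''
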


\begin{proof}
	From~\eqref{eq:alpha-coefficient},
	\[
	\alpha_{k}
	=
	\frac{1}{2}
	\left(
	\frac{1}{\kappa_{k}} - \frac{1}{\kappa_{k+1}}
	\right)
	\frac{d_{k}^{2}}{R_{k}}.
	\]
	
	Using the identity
	\[
	\frac{1}{\kappa_{k}} - \frac{1}{\kappa_{k+1}}
	=
	\frac{\kappa_{k+1} - \kappa_{k}}{\kappa_{k}\kappa_{k+1}},
	\]
	and the uniform bounds on curvatures, we obtain
	\[
	\frac{1}{\kappa_{k}} - \frac{1}{\kappa_{k+1}}
	\geq
	\frac{\delta}{\kappa_{\max}^{2}} > 0.
	\]
	
	Together with
	\[
	\frac{d_{k}^{2}}{R_{k}} \geq \frac{d_{\min}^{2}}{R_{\max}},
	\]
	this yields a positive lower bound \(\alpha_{\min} > 0\).
	
	The upper bound follows similarly from the uniform bounds on \(\kappa_k, d_k, R_k\).
\end{proof}

\begin{theorem}[Quadratic convergence in a tangency neighborhood]
	\label{thm:quadratic-convergence}
	Assume the nested convex configuration of Section~2, together with the uniform bounds of Theorem~\ref{thm:alpha-bounds}. Then there exist constants \(A > 0\) and \(r > 0\) such that for every initial point \(c_{0}\) whose geodesic coordinate \(s_{0}\) at a tangency point satisfies \(|s_{0}| < r\), the orbit defined by
	\[
	c_{k+1} = G_{k}(c_{k})
	\]
	remains in the tangency neighborhood and converges to the tangency point \(p\).
	
	Moreover, the convergence is quadratic:
	\begin{equation}
	|s_{k+1}| \leq A |s_{k}|^{2},
	\label{eq:quadratic-estimate}
	\end{equation}
	and in particular \(s_{k} \to 0\).
\end{theorem}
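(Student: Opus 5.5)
The plan is to reduce the statement to the toy model of Theorem~\ref{thm:toy-model}, using a uniform version of the quadratic tangency law. There are three steps.

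\textbf{Step 1: a uniform quadratic bound.} Theorem~\ref{thm:quadratic-tangency} gives, for each fixed $k$, the expansion $G_k(s)=\alpha_k s^2+o(s^2)$. This is not enough on its own, because the radius on which the $o(s^2)$ remainder is controlled could shrink with $k$. I would therefore upgrade it to the following claim: there exist $r_1>0$ and $A>0$ such that
\[
|G_k(s)|\le A|s|^2 \quad \text{for all } k \text{ and all } |s|<r_1 .
\]
I would first apply Taylor's theorem with integral remainder to the explicit composition in Steps~2--3 of the proof of Theorem~\ref{thm:quadratic-tangency}. This writes $G_k(s)=\alpha_k s^2+\rho_k(s)$ with $|\rho_k(s)|\le M|s|^3$. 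The constant $M$ should depend only on $C^3$ norms of the boundaries $\partial C_k,\partial C_{k+1},\partial\Omega_k$, on $\kappa_{\min},\kappa_{\max}$, and on $d_{\max}$ and $R_{\min}$. Those norms control the graphs $f_k,f_{k+1},g_k$, and hence the implicit-function solution for the projection.

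I expect this uniformity of the remainder to be the main obstacle. The listed bounds in Theorem~\ref{thm:alpha-bounds} do not control third derivatives, and the transversality at $x(p)$ must also be uniform, which needs $1-h\kappa_k\ge\gamma_{\min}>0$. My first attempt would be to read these bounds into ``the nested convex configuration of Section~2'': take uniform $C^3$ bounds as part of the standing regularity, and note that $1-h\kappa_k=\kappa_k R_k\ge\kappa_{\min}R_{\min}>0$ by \eqref{eq:parallel-surface}. Once $M$ is uniform, I would set $A=\alpha_{\max}+M r_1$, using the upper bound on $\alpha_k$ from Theorem~\ref{thm:alpha-bounds}. This gives the desired estimate $|G_k(s)|\le A|s|^2$.

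\textbf{Step 2: invariance of the neighborhood.} Choose $r\le r_1$ with $q:=Ar<1$. If $|s_k|<r$, then
\[
|s_{k+1}|\le A|s_k|^2\le q|s_k|<r .
\]
By induction the orbit stays in the tangency neighborhood, so each $G_k$ is applied only where the estimate of Step~1 holds. This is where the uniformity matters: a $k$-dependent radius would break the induction. Writing $s_k$ for the geodesic coordinate on $\partial C_k$ requires the tangency points $p$ to line up along the nested sequence. I would take this from the setting, in which the point $p$ is common to all levels, as in Theorem~\ref{thm:uniform-contraction}.

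\textbf{Step 3: convergence and rate.} The estimate \eqref{eq:quadratic-estimate} is exactly the recurrence of Step~2. Setting $\sigma_k=A|s_k|$ gives $\sigma_{k+1}\le\sigma_k^2$, hence
\[
|s_k|\le A^{-1}(A|s_0|)^{2^k}\le r\,q^{2^k-1},
\]
exactly as in Step~3 of the proof of Theorem~\ref{thm:toy-model}. Therefore $s_k\to 0$ double-exponentially. Since geodesic distance is equivalent to $|s_k|$ near $p$, it follows that $c_k\to p$.
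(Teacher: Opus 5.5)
Your proposal is correct, given the paper's Theorem~\ref{thm:quadratic-tangency} and Theorem~\ref{thm:alpha-bounds}, and it follows essentially the same route as the paper: a uniform upper bound on $\alpha_k$, plus a remainder controlled uniformly in $k$ through uniform $C^3$ bounds, yields $|G_k(s)|\le A|s|^2$ on a fixed neighborhood, and then $Ar<1$ gives invariance and convergence by iteration. You are more explicit than the paper about the hypotheses it silently uses (uniform $C^3$ norms, uniform transversality via $1-h\kappa_k=\kappa_kR_k\ge\kappa_{\min}R_{\min}$, and a tangency point $p$ common to all levels); one small adjustment is needed, since $C^3$ boundaries make $G_k$ in general only $C^2$, so you should bound $|G_k(s)|\le\tfrac12\sup|G_k''|\,s^2$ directly rather than claim an $O(|s|^3)$ remainder.
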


\begin{proof}
	From Theorem~\ref{thm:quadratic-tangency}, we have
	\[
	G_{k}(s) = \alpha_{k} s^{2} + o(s^{2}).
	\]
	
	By Theorem~\ref{thm:alpha-bounds}, the coefficients \(\alpha_{k}\) are uniformly bounded above by some \(A_{0}\). Moreover, the remainder term \(o(s^{2})\) can be controlled uniformly in a sufficiently small neighborhood because the boundaries have uniformly bounded \(C^{3}\) norms.
	
	Thus, there exists \(r > 0\) such that for \(|s| < r\),
	\[
	|G_{k}(s)| \leq 2A_{0} |s|^{2}.
	\]
	
	Setting \(A = 2A_{0}\), we obtain~\eqref{eq:quadratic-estimate}. Choosing \(r\) such that \(Ar < 1\), it follows that \(|s_{k}|\) remains in the neighborhood and converges to zero by iteration.
\end{proof}

\begin{remark}[Intrinsic mechanism of convergence]
	The preceding theorem shows that convergence toward tangency points is not merely a consequence of a uniform contraction property, but rather follows intrinsically from the quadratic structure of the dynamics.
	
	Each iteration effectively squares the distance to the tangency point, leading to a rapid collapse of trajectories toward the tangency set. This mechanism is fundamentally nonlinear and cannot be captured by classical linear contraction arguments.
\end{remark}

\section{Super-Exponential Convergence}

The quadratic tangency law established in Theorem~\ref{thm:quadratic-tangency} implies that the dynamics converges toward tangency points at a rate that is faster than exponential.

\begin{theorem}[Super-exponential convergence]
	\label{thm:superexp}
	Let \(s_{k}\) denote the geodesic coordinate of the orbit \(c_{k}\) near a tangency point \(p\). Assume that the orbit lies in the tangency neighborhood described in Theorem~\ref{thm:quadratic-convergence}. Then there exist constants \(C > 0\) and \(0 < \rho < 1\) such that
	\begin{equation}
	|s_{k}| \leq C \rho^{2^{k}}.
	\label{eq:superexp}
	\end{equation}
\end{theorem}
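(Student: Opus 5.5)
The plan is to reduce the statement to the quadratic recurrence of Theorem~\ref{thm:quadratic-convergence} and then repeat the squaring argument used in Step~3 of the proof of Theorem~\ref{thm:toy-model}. Theorem~\ref{thm:quadratic-convergence} provides constants $A>0$ and $r>0$ with $Ar<1$. For any orbit with $|s_0|<r$, that theorem gives two facts: the orbit stays in the tangency neighborhood for all $k$, and it satisfies
\[
|s_{k+1}|\le A|s_k|^2 .
\]
These constants are uniform in $k$, because the uniform bounds of Theorem~\ref{thm:alpha-bounds} control the remainder $o(s^2)$ in the same way at every step. This uniformity is what allows the inequality to be iterated with a single constant.

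The key step is the substitution $v_k := A|s_k|$. Multiplying the recurrence by $A$ turns it into the pure squaring inequality $v_{k+1}\le v_k^2$. A direct induction then gives
\[
v_k\le v_0^{2^k}.
\]
Setting $\rho:=A|s_0|$, which lies strictly below $Ar<1$, we obtain
\[
|s_k|\le \frac{1}{A}\,\rho^{2^k}.
\]
This is the claimed bound with $C=1/A$. If one wants $\rho$ independent of the initial point, it is enough to restrict to $|s_0|\le r_0<r$ and take $\rho:=Ar_0$. This mirrors the estimate $s_n\le r\,q^{2^n-1}$ of the toy model: writing $\rho^{2^k}/\rho$ instead of $\rho^{2^k}$ would only change the constant $C$. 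In the degenerate case $\rho=0$, that is $s_0=0$, all $s_k$ vanish and any positive $\rho$ can be used.

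The only point that needs care is that the quadratic bound must hold at every step simultaneously, and not just asymptotically as $s\to0$. Two facts are needed for this. First, the orbit must never leave the neighborhood $\{|s|<r\}$. This follows from $|s_{k+1}|\le (A|s_k|)\,|s_k|<|s_k|$, so the sequence $|s_k|$ is decreasing. Second, one must rely on the uniformity in $k$ already asserted in Theorem~\ref{thm:quadratic-convergence}, rather than on the $k$-dependent little-$o$ term coming from Theorem~\ref{thm:quadratic-tangency}. A further subtlety is that the geodesic coordinate on $\partial C_{k+1}$ is centered at a tangency point of the next pair, and this point must be identified with $p$. I would take that identification as part of the standing hypothesis in the statement that the orbit lies in the tangency neighborhood. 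Once these points are in place, the rest is the routine induction described above.
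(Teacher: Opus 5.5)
Your proof is correct and follows essentially the same route as the paper: you iterate the uniform bound $|s_{k+1}|\le A|s_k|^2$ from Theorem~\ref{thm:quadratic-convergence} to get $|s_k|\le A^{-1}(A|s_0|)^{2^k}$, then take $\rho=A|s_0|$. Your extra care tidies small points the paper glosses over: you treat $s_0=0$ separately, since then $\rho=0\notin(0,1)$, and you write $\le$ where the paper's rewriting step uses $=$. The paper's choice $C=\max(1,A^{-1})$ instead of your $C=A^{-1}$ is immaterial.
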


\begin{proof}
	From Theorem~\ref{thm:quadratic-convergence}, we know that for sufficiently small \(s_{k}\),
	\begin{equation}
	|s_{k+1}| \leq A |s_{k}|^{2}
	\label{eq:quadratic-recursion}
	\end{equation}
	for some constant \(A > 0\).
	
	Choose \(r > 0\) such that \(A r < 1\), and assume \(|s_{0}| < r\). Then by induction all iterates remain in this neighborhood.
	
	Iterating~\eqref{eq:quadratic-recursion} yields
	\[
	|s_{k}|
	\leq
	A^{\,1 + 2 + \cdots + 2^{k-1}} |s_{0}|^{2^{k}}.
	\]
	
	Since
	\[
	1 + 2 + \cdots + 2^{k-1} = 2^{k} - 1,
	\]
	we obtain
	\begin{equation}
	|s_{k}| \leq A^{2^{k}-1} |s_{0}|^{2^{k}}.
	\label{eq:explicit-bound}
	\end{equation}
	
	Rewriting,
	\[
	|s_{k}|
	=
	A^{-1} \bigl(A |s_{0}|\bigr)^{2^{k}}.
	\]
	
	Setting
	\[
	\rho = A |s_{0}| < 1,
	\qquad
	C = \max(1, A^{-1}),
	\]
	we obtain~\eqref{eq:superexp}. This proves the result.
\end{proof}

\begin{remark}[Doubling of precision]
	\label{rem:doubling}
	The recurrence \(s_{k+1} \approx \alpha_{k} s_{k}^{2}\) implies that each iteration approximately squares the distance to the tangency point. Consequently, the number of correct digits in the approximation of the tangency point roughly doubles at each iteration — a hallmark of quadratic convergence.
	
	In logarithmic variables \(u_{k} = -\log |s_{k}|\), the dynamics becomes approximately affine:
	\[
	u_{k+1} \approx 2 u_{k} - \log \alpha_{k},
	\]
	which explains the double-exponential decay.
\end{remark}

\begin{corollary}[Double-exponential convergence of angles]
	\label{cor:angle-superexp}
	Under the same hypotheses, the angular variable \(\theta_{k}\) also converges double-exponentially:
	\begin{equation}
	|\theta_{k}| \leq C' \rho^{2^{k}}
	\end{equation}
	for some constant \(C' > 0\).
\end{corollary}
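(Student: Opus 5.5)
The plan is to transfer the double-exponential bound of Theorem~\ref{thm:superexp} from the geodesic coordinate $s_k$ to the angular variable $\theta_k$. The transfer uses the linear comparison between $\theta_k$ and $s_k$ from Lemma~\ref{lem:theta-s-relation}. Iterating the angular law of Theorem~\ref{thm:angular-dynamics} directly would require uniform control of the remainders $o(\theta_k^2)$, which is exactly what the $s$-coordinate argument already supplies, so I will avoid that route.

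\textbf{Step 1: uniform comparison constant.} Lemma~\ref{lem:theta-s-relation} gives $|\theta_k| = 2\kappa_k|s_k| + o(s_k)$. I first upgrade this to a bound that holds for all $k$ at once: there are $r_1>0$ and $K>0$ with
\[
|\theta_k| \le K|s_k| \qquad \text{whenever } |s_k|<r_1 .
\]
In the proof of the lemma, $\theta_k$ is controlled by the horizontal parts of the two normals $\nu_{C_k}$ and $n_{\Omega_k}$. Their Taylor remainders depend only on the $C^3$ norms of $\partial C_k$ and $\partial\Omega_k$. I will take these norms to be uniformly bounded, as the proof of Theorem~\ref{thm:quadratic-convergence} already does. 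With the bounds $\kappa_k\le\kappa_{\max}$ and $R_k\ge R_{\min}$ from Theorem~\ref{thm:alpha-bounds}, the first-order coefficient $\kappa_k + 1/R_k$ in \eqref{eq:theta-x} is then bounded uniformly. This lets me take, for instance, $K = 2(\kappa_{\max} + R_{\min}^{-1})$.

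\textbf{Step 2: ensure the orbit stays where the comparison applies.} If needed I shrink the radius $r$ of Theorem~\ref{thm:quadratic-convergence} so that $r\le r_1$. The quadratic estimate \eqref{eq:quadratic-estimate} with $Ar<1$ gives $|s_{k+1}|\le (Ar)|s_k| < |s_k|$. Hence every iterate satisfies $|s_k|<r\le r_1$, and Step~1 applies for every $k$.

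\textbf{Step 3: combine.} Theorem~\ref{thm:superexp} gives $|s_k|\le C\rho^{2^k}$, so
\[
|\theta_k| \le K C \rho^{2^k}.
\]
This is the claim with $C' = KC$ and the same $\rho$.

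The main obstacle is Step~1, namely making the comparison uniform in $k$. Lemma~\ref{lem:theta-s-relation} is stated pointwise with an unspecified $o(s_k)$, so the argument rests on the uniform $C^3$ hypothesis already implicit in Theorem~\ref{thm:quadratic-convergence}.

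If that uniformity looks fragile, there is a cruder fallback that gives the same conclusion. Since $\theta_k$ is an angle between unit vectors, $|\theta_k| \le \tfrac{\pi}{2}\,|\nu_{C_k}(c_k) - n'|$, where $n'$ denotes the reflected normal. Both normals are Lipschitz in the base point, with constants controlled by $\kappa_{\max}$ and $1/R_{\min}$. This directly yields $|\theta_k|\le K|s_k|$ with no remainder analysis.
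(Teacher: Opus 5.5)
Your proposal is correct and takes the same route as the paper. The paper also combines the comparison $|\theta_k| = 2\kappa_k|s_k| + o(s_k)$ from Lemma~\ref{lem:theta-s-relation}, the uniform bound on the curvatures $\kappa_k$, and the estimate $|s_k|\le C\rho^{2^k}$ of Theorem~\ref{thm:superexp}. Your Steps~1 and~2 only make explicit what the paper's two-line proof leaves implicit: the uniformity in $k$ of the $o(s_k)$ remainder (via the uniform $C^3$ bounds already invoked in Theorem~\ref{thm:quadratic-convergence}), and the fact that the orbit stays where the comparison holds.
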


\begin{proof}
	From Lemma~\ref{lem:theta-s-relation}, we have
	\[
	\theta_{k} = -2\kappa_{k} s_{k} + o(s_{k}).
	\]
	
	Since the curvatures \(\kappa_{k}\) are uniformly bounded and \(s_{k}\) converges double-exponentially by Theorem~\ref{thm:superexp}, the same estimate holds for \(\theta_{k}\).
\end{proof}

\section{Limit Set and Fractal Structure}

We now describe the asymptotic geometric structure generated by the sequential dynamics.

Recall that the transition operators
\[
G_{k} : \partial C_{k} \to \partial C_{k+1}
\]
generate the non-autonomous dynamical system
\[
c_{k+1} = G_{k}(c_{k}).
\]

For \(n \geq 1\), we define the compositions
\begin{equation}
H_{n} = G_{n-1} \circ \cdots \circ G_{0},
\label{eq:Hn}
\end{equation}
so that \(H_n : \partial C_0 \to \partial C_n\).

\subsection{Definition of the limit set}

Since the maps act between different spaces \(\partial C_k\), the notion of limit set must be formulated in terms of trajectories. Such limit sets are classical objects in fractal geometry; see \cite{falconer}.

\begin{definition}[Limit set]
	\label{def:limit-set}
	The limit set \(\Lambda\) is the set of accumulation points of trajectories:
	\begin{equation}
	\Lambda
	=
	\left\{
	x \in \mathbb{R}^2 \;:\;
	\exists\, c_0 \in \partial C_0,\;
	\exists\, k_j \to \infty,\;
	H_{k_j}(c_0) \to x
	\right\}.
	\label{eq:limit-set}
	\end{equation}
\end{definition}

\begin{remark}
	Each point of \(\Lambda\) arises as the limit of a subsequence of a trajectory.
	In particular,
	\[
	\Lambda \subset \bigcap_{n \ge 0} \overline{C_n}.
	\]
\end{remark}

\subsection{Compactness}

\begin{theorem}[Existence and compactness of the limit set]
	\label{thm:limit-set-compact}
	The limit set \(\Lambda\) is nonempty and compact.
\end{theorem}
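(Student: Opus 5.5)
The plan is to prove the two properties separately, using only the boundedness of the configuration. All orbits live in the compact set $C_0$, since $H_n(c_0)\in\partial C_n\subset C_n\subset C_0$ for every $n$.

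\textbf{Nonemptiness.} Fix any $c_0\in\partial C_0$. The sequence $(H_n(c_0))_{n\ge1}$ lies in the compact set $C_0$. By Bolzano--Weierstrass it has a convergent subsequence $H_{k_j}(c_0)\to x$, so $x\in\Lambda$ by Definition~\ref{def:limit-set}. This also gives boundedness: every accumulation point of a sequence in the closed set $C_0$ belongs to $C_0$, hence $\Lambda\subset C_0$. More precisely, $\Lambda\subset\bigcap_n C_n=C_\infty$, because for each $n$ the tail $H_k(c_0)$, $k\ge n$, lies in $C_k\subset C_n$, which is closed.

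\textbf{Closedness.} This is the main obstacle. As literally defined, $\Lambda$ is a union over $c_0$ of $\omega$-limit sets, and such a union need not be closed in general. I would try two routes, in this order.

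\emph{Route 1: diagonal argument.} Let $x_m\in\Lambda$ with $x_m\to x$, and choose witnesses $c_0^{(m)}$ and $k^{(m)}_j$. For each $m$, pick an index $n_m>n_{m-1}$ with $|H_{n_m}(c_0^{(m)})-x_m|<1/m$. If the witnesses $c_0^{(m)}$ can be taken convergent to some $c_0$, and the maps $H_{n_m}$ are equicontinuous, then $H_{n_m}(c_0)\to x$. Equicontinuity holds in the tangency neighbourhood by Lemma~\ref{lem:local-contraction} and Theorem~\ref{thm:quadratic-convergence}, since the $G_k$ are contractions there. Alternatively, one can build a single point $c_0$ whose orbit visits neighbourhoods of every $x_m$, using the freedom in the symbolic itinerary.

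\emph{Route 2: replace $\Lambda$ by its closure.} If Route 1 does not go through in full generality, I would observe that in the tangency regime of Theorem~\ref{thm:quadratic-convergence} every orbit converges. By Theorem~\ref{thm:superexp} the convergence is double-exponential, so $\Lambda$ coincides with the set of limit points of tangency sets, which lies in $\overline{T}\cap C_\infty$. I would then show that this set is closed by a Hausdorff-convergence argument for the nested compact sets $\partial C_n$. Failing that, I would interpret the theorem as a statement about $\overline{\Lambda}$, which is automatically compact as a closed subset of $C_0$.

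\textbf{Conclusion.} With nonemptiness, the inclusion $\Lambda\subset C_0$, and closedness (or the passage to $\overline{\Lambda}$), Heine--Borel in $\mathbb{R}^2$ gives compactness of $\Lambda$.
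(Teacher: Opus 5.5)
Your nonemptiness argument and the inclusion $\Lambda\subset\bigcap_n C_n$ are correct and coincide with the paper's. The gap is closedness, which none of your routes proves. In Route~1, extracting a convergent subsequence $c_0^{(m)}\to c_0$ costs nothing, by compactness of $\partial C_0$. The missing ingredient is uniform equicontinuity of the whole family $(H_n)$ on $\partial C_0$.

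Lemma~\ref{lem:local-contraction} and Theorem~\ref{thm:quadratic-convergence} give this only locally. They concern orbits that already start within $r$ of a tangency point, and there all orbits converge to that same point, so this region does not affect closedness. The witnesses $c_0^{(m)}$ and their limit, by contrast, are arbitrary points of $\partial C_0$.

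Your variant with a single orbit visiting every $x_m$ assumes itineraries can be prescribed. But each $G_k$ is single-valued, and the quadratic trapping near tangency points works against an orbit wandering between neighbourhoods. Route~2 assumes every orbit enters the tangency regime, which is not established for general $c_0$. Moreover, an inclusion $\Lambda\subset\overline{T}\cap C_\infty$ would not give closedness. Replacing $\Lambda$ by $\overline{\Lambda}$ changes the statement.

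That said, you have located exactly the flaw in the paper's own proof. The paper chooses $y_n=H_{k^{(n)}_{j(n)}}(c_0^{(n)})$ with $|y_n-x_n|<1/n$ and concludes $x\in\Lambda$. But the $y_n$ lie on different trajectories, so the argument only shows
\[
x\in\Lambda^{*}:=\bigcap_{N\ge 1}\overline{\bigcup_{k\ge N}H_k(\partial C_0)}.
\]
This is a closed set containing $\Lambda$, and for general maps it can be strictly larger, since a union of $\omega$-limit sets need not be closed. So with Definition~\ref{def:limit-set} as written, neither argument proves the theorem.

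A correct version needs one of two changes. The first is an extra hypothesis, e.g.\ uniform equicontinuity of $(H_n)$ on $\partial C_0$. Under it your Route~1 is a complete proof: pick increasing witnessing times $n_m$ with $|H_{n_m}(c_0^{(m)})-x_m|<1/m$, and equicontinuity gives $H_{n_m}(c_0)\to x$. The second is to redefine the limit set as $\Lambda^{*}$, which is nonempty and compact as a decreasing intersection of nonempty compact subsets of $C_0$. The latter is what the paper's argument actually establishes.
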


\begin{proof}
	Since \(C_k \subset C_0\) for all \(k\), every trajectory \((c_k)\) remains in the compact set
	\(\overline{C_0}\). Hence every trajectory admits at least one accumulation point.
	Therefore \(\Lambda\) is nonempty.
	
	It remains to prove that \(\Lambda\) is closed.
	Let \((x_n)\subset \Lambda\) be a sequence converging to some \(x \in \mathbb{R}^2\).
	For each \(n\), by definition of \(\Lambda\), there exist \(c_0^{(n)} \in \partial C_0\) and
	a sequence \(k_j^{(n)} \to \infty\) such that
	\[
	H_{k_j^{(n)}}\!\bigl(c_0^{(n)}\bigr) \to x_n
	\qquad \text{as } j \to \infty.
	\]
	
	For each \(n\), choose \(j(n)\) sufficiently large so that
	\[
	\left|H_{k_{j(n)}^{(n)}}\!\bigl(c_0^{(n)}\bigr) - x_n\right| < \frac{1}{n}.
	\]
	Set
	\[
	y_n := H_{k_{j(n)}^{(n)}}\!\bigl(c_0^{(n)}\bigr).
	\]
	Then
	\[
	|y_n - x|
	\le |y_n - x_n| + |x_n - x|
	< \frac{1}{n} + |x_n - x| \to 0.
	\]
	Thus \(y_n \to x\).
	
	Each \(y_n\) belongs to some trajectory, and the corresponding times \(k_{j(n)}^{(n)}\) can be chosen arbitrarily large.
	Hence \(x\) is an accumulation point of admissible trajectories, so \(x \in \Lambda\).
	
	Therefore \(\Lambda\) is closed. Since \(\Lambda \subset \overline{C_0}\) and \(\overline{C_0}\) is compact,
	it follows that \(\Lambda\) is compact.
\end{proof}

\subsection{Concentration near tangency points}

\begin{remark}[Concentration near the tangency set]
	\label{rem:concentration}
	The super-exponential convergence established in Theorem~\ref{thm:superexp}
	implies that once a trajectory enters a sufficiently small neighborhood of a tangency point,
	it remains trapped and converges rapidly.
	
	As a consequence, the asymptotic part of the limit set \(\Lambda\) is determined by the local quadratic dynamics near the tangency points.
\end{remark}

\subsection{Symbolic branching structure}

Near each tangency point, the transition maps admit a quadratic expansion
\[
s_{k+1} \approx \alpha_k s_k^2.
\]

When several tangency points are present, each point generates a distinct local branch.
Thus, each trajectory can be encoded by a sequence indicating which tangency region is visited at each step.

This defines a symbolic dynamics on a finite alphabet:
\[
\Sigma = \{1,\dots,m\}^{\mathbb{N}},
\]
where \(m\) is the number of tangency branches.

\subsection{Logarithmic dynamics}

In logarithmic coordinates
\[
u = -\log |s|,
\]
the dynamics becomes approximately affine:
\[
u_{k+1} = 2u_k - \log \alpha_k + o(1).
\]

Equivalently, the inverse dynamics is asymptotically given by
\[
u \mapsto \frac{u + \log \alpha_k}{2}.
\]

Thus, each branch acts asymptotically as a contraction of ratio \(1/2\), up to lower-order terms.

\subsection{Fractal structure and notions of dimension}

The asymptotic dynamics combines two fundamental mechanisms:
\begin{itemize}
	\item symbolic branching arising from multiple tangency points;
	\item quadratic contraction leading to a renormalized affine dynamics in logarithmic coordinates.
\end{itemize}

This structure naturally leads to several notions of dimension, which must be carefully distinguished.

\begin{definition}[Symbolic dimension]
	Let \(m\) be the number of tangency branches. The symbolic dimension is defined by
	\[
	d_{\mathrm{symb}} = \frac{\log m}{\log 2}.
	\]
	This corresponds to the topological entropy of the full shift on \(m\) symbols.
\end{definition}

\begin{definition}[Similarity dimension in logarithmic coordinates]
	In the logarithmic variable \(u = -\log |s|\), the inverse dynamics is asymptotically given by
	\[
	u \mapsto \frac{u + \log \alpha_i}{2}.
	\]
	When all coefficients \(\alpha_i\) coincide, the associated iterated function system has similarity dimension
	\[
	d_{\mathrm{sim}} = \log_2 m.
	\]
\end{definition}

\begin{definition}[Geometric Hausdorff dimension]
	The Hausdorff dimension \(\dim_H \Lambda\) of the limit set \(\Lambda \subset \mathbb{R}^2\)
	is defined in the usual sense of fractal geometry.
\end{definition}

\begin{theorem}[Upper bound for the geometric dimension]
	\label{thm:dimension}
	Let \(\Lambda\) be the limit set defined above. Then
	\[
	\dim_H \Lambda \le \min(1, \log_2 m).
	\]
	Moreover:
	\begin{itemize}
		\item if \(\log_2 m \le 1\) and the branches are sufficiently separated, then
		\[
		\dim_H \Lambda = \log_2 m;
		\]
		\item if \(\log_2 m > 1\), then
		\[
		\dim_H \Lambda = 1,
		\]
		and the one-dimensional Hausdorff measure of \(\Lambda\) is zero.
	\end{itemize}
\end{theorem}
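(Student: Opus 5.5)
The plan has two parts. First I will prove the universal upper bound $\dim_H \Lambda \le \min(1,\log_2 m)$ using two separate containments. Then I will treat the two refined cases as lower-bound and measure statements.

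For the bound $\dim_H\Lambda\le 1$, I will show that $\Lambda \subset \partial C_\infty$, where $C_\infty=\bigcap_k C_k$ is as in Lemma~\ref{lem:GNP-stability}.
\begin{itemize}
\item Each $H_{k}(c_0)$ lies on $\partial C_k$.
\item A decreasing sequence of compact convex sets converges in the Hausdorff metric to its intersection. When $C_\infty$ has nonempty interior, which holds in the configurations of Section~2, the boundaries $\partial C_k$ also converge in the Hausdorff metric to $\partial C_\infty$.
\item Hence every accumulation point of a trajectory lies on $\partial C_\infty$.
\item This is the boundary of a compact convex set, so it is a rectifiable curve of length at most the perimeter of $C_0$.
\end{itemize}
Therefore $\mathcal H^1(\Lambda)<\infty$, and in particular $\dim_H\Lambda\le 1$. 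This refines the inclusion $\Lambda\subset\bigcap_n\overline{C_n}$ stated after Definition~\ref{def:limit-set}.

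For the bound $\dim_H\Lambda\le\log_2 m$, I will use the symbolic coding and the logarithmic coordinates.
\begin{itemize}
\item By Remark~\ref{rem:concentration} and Theorem~\ref{thm:quadratic-convergence}, the asymptotic part of every trajectory lies in one of the $m$ tangency neighbourhoods, and its itinerary is a word in $\Sigma=\{1,\dots,m\}^{\mathbb N}$.
\item In the variable $u=-\log|s|$, the inverse branches are $u\mapsto (u+\log\alpha_i)/2+o(1)$, following Theorem~\ref{thm:toy-model}(iv) with Theorem~\ref{thm:alpha-bounds} supplying uniform bounds on $\alpha_i$.
\item At depth $n$ the limit structure is therefore covered by $m^n$ cylinder sets of $u$-diameter $\lesssim 2^{-n}$. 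This gives $\mathcal H^{t}$-sums $\lesssim m^n2^{-nt}\to 0$ for every $t>\log_2 m$.
\item The map $u\mapsto s=e^{-u}$ is $1$-Lipschitz on $\{u\ge u_0\}$ with $u_0\ge 0$. Passing from $s$ to the Euclidean point on $\partial C_n$ is Lipschitz by the $C^3$ bounds, since $s=x+O(x^3)$. Lipschitz images do not increase Hausdorff dimension, so the bound transfers to $\Lambda$.
\end{itemize}
Combining the two parts gives the first assertion.

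For the refined statements, in the case $\log_2 m\le 1$ with separated branches, I will impose an open set condition on the limiting IFS $F_i(u)=(u+\log\alpha_i)/2$. Moran's theorem then gives dimension exactly $\log_2 m$ for the self-similar attractor in the $u$-line. I will then need to show that the $o(1)$ perturbations and the coordinate change do not lower the dimension. For this I would use a bounded-distortion argument: the non-autonomous compositions are uniformly bi-Lipschitz conjugate, on cylinders, to the model maps, with constants controlled by the uniform $C^3$ bounds. In the case $\log_2 m>1$, the upper bound $1$ comes from the first part. For $\mathcal H^1(\Lambda)=0$ I would try to show that $\Lambda$ contains no arc of $\partial C_\infty$: the super-exponential collapse of Theorem~\ref{thm:superexp} forces each depth-$n$ cylinder to have Euclidean length at most $C\rho^{2^n}$, and $m^n\rho^{2^n}\to 0$. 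The lower bound $\dim_H\Lambda=1$ would then have to come from accumulating cylinders over all starting scales.

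I expect the lower bounds to be the main obstacle. Both the equality $\dim_H\Lambda=\log_2 m$ and $\dim_H\Lambda=1$ require transferring dimension from the $u$-coordinate back to the Euclidean picture. However, the map $u\mapsto e^{-u}$ is strongly contracting, not bi-Lipschitz, on unbounded $u$-ranges. Moreover, the same covering estimate $m^n\rho^{2^n}$ that gives $\mathcal H^1(\Lambda)=0$ also suggests that the Euclidean dimension could collapse. My first attempt will be to restrict to a single scale window $u\in[u_0,u_0+L]$, where the exponential map is bi-Lipschitz, and apply Moran there. If that fails, the statement may need to be read with the dimension measured in logarithmic coordinates, as in the similarity-dimension definition.
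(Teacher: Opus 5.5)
The paper states Theorem~\ref{thm:dimension} without any proof; only two interpretive remarks follow it. So there is no argument of the paper's to compare with, and your proposal has to stand on its own.

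\textbf{The bound $\dim_H\Lambda\le 1$.} Your first part is correct, and it is the only rigorous piece. You have $H_k(c_0)\in\partial C_k$ and $\partial C_k\to\partial C_\infty$ in the Hausdorff metric. Hence $\Lambda\subset\partial C_\infty$ and $\mathcal H^1(\Lambda)\le\operatorname{per}(C_0)<\infty$. The nonempty-interior assumption is unnecessary, and it fails in the paper's stadium, Case~1, where $C_\infty$ is a segment; the boundaries still converge there.

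\textbf{The bound $\dim_H\Lambda\le\log_2 m$.} Here there is a genuine gap: your covering controls the wrong set.
\begin{itemize}
\item An orbit entering $\{|s|<r\}$ converges to $p$ (Theorem~\ref{thm:superexp}). Since $G_k$ maps a neighbourhood of $p$ into a neighbourhood of the same point, the itinerary of such an orbit is eventually constant. So there are no $m^n$ cylinders of orbits, and the tangency neighbourhoods contribute only tangency points to $\Lambda$.
\item The attractor of $F_i(u)=(u+\log\alpha_i)/2$ is a different set. It consists of the $u$ whose forward orbits $u_{k+1}=2u_k-\log\alpha_{i_k}$ stay bounded, i.e.\ $|s|$ near the repelling fixed points $1/\alpha_i$ of $s\mapsto\alpha_i s^2$. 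These points lie outside the region $|s|<r$, $Ar<1$, where the quadratic law was derived, and their orbits do not converge to $p$. No chart carries this attractor onto $\Lambda$.
\item What makes $\Lambda$ large are the orbits that never enter a tangency neighbourhood. Your first bullet, that every orbit eventually does, is not what Remark~\ref{rem:concentration} says; it only gives trapping after entry. The paper itself expects a segment in $\Lambda$ for the stadium, Case~1.
\end{itemize}

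In fact the bound is false. Let $C_k$ be the closed disk of radius $r_k=1+2^{-k}$ centred at $O_k=(0,r_k)$. Then $T_k=\{0\}$ and $m=1$. Let $\Omega_k$ be the open disk centred at $O_k$ of radius $5-2^{1-k}$; these are increasing and have the $C_k$-geometric normal property. Then $G_k$ is the central projection from $O_k\in\operatorname{int}C_{k+1}$ onto $\partial C_{k+1}$. It is a homeomorphism with $|G_k(c)-c|\le 2^{-k}$, and $G_k'(0)=1$, not $0$. Hence $H_n$ converges uniformly to a continuous surjection $\partial C_0\to\partial C_\infty$. So $\Lambda=\partial C_\infty$ is a circle, and $\dim_H\Lambda=1>\log_2 1$.

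\textbf{The refined assertions.} These fail as well.
\begin{itemize}
\item Your own estimate, $m^n$ sets of diameter $C\rho^{2^n}$, would give $\mathcal H^t(\Lambda)=0$ for every $t>0$, hence $\dim_H\Lambda=0$. That contradicts both equalities you aim for, so the bounded-distortion transfer has nothing consistent to transfer.
\item Reading dimension in logarithmic coordinates does not rescue the statement. Each $F_i$ has ratio $1/2$, so with $I=[\min_i\log\alpha_i,\max_i\log\alpha_i]$ one has $\bigcup_iF_i(I)=I$, and the attractor is exactly $I$. It is a single point when all $\alpha_i$ coincide, which is the symmetric case the paper stresses (e.g.\ the rounded triangle). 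Otherwise it is an interval of positive length. Either way the third bullet fails for $m\ge3$.
\item In the plane, repeat the construction above with outer parallel bodies $\Omega_k=C_k+h_kB$, where $B$ is the unit disk and $h_{k+1}=h_k+d_H(C_k,C_{k+1})$. Apply it to a nested family with $m\ge3$ tangency points per level, e.g.\ $r=1+\varepsilon_k(\cos3\theta-1)$ in polar coordinates with $\varepsilon_k\uparrow$ small. This again gives $\Lambda=\partial C_\infty$ and $\mathcal H^1(\Lambda)>0$.
\end{itemize}

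Beyond $\dim_H\Lambda\le1$, the statement needs different hypotheses or a different definition of $\Lambda$ before any proof can work.
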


\begin{remark}
	The quantity \(\log_2 m\) should therefore be interpreted as a similarity dimension
	in the logarithmic model rather than as the geometric Hausdorff dimension in the plane
	when \(\log_2 m > 1\).
\end{remark}

\begin{remark}
	This result resolves the apparent discrepancy between symbolic complexity
	and geometric embedding.
\end{remark}

\section{Iterated Function System Representation}

Under suitable geometric assumptions, the sequential dynamics generated by the transition operators
can be interpreted, after a change of variables, as an asymptotically affine system exhibiting
a branching structure similar to iterated function systems (IFS). This representation naturally fits into the framework of iterated function systems (IFS),
as introduced by Hutchinson \cite{hutchinson} and further developed in the fractal geometry
literature \cite{falconer,mauldin}.

\subsection{Logarithmic representation of the dynamics}

Although the transition operators \(G_{k}\) are quadratic, a logarithmic change of coordinates
transforms the dynamics into an approximately affine form.

\begin{theorem}[Logarithmic affine representation]
	\label{thm:ifs}
	Assume that:
	\begin{enumerate}
		\item The tangency points are isolated;
		\item The transition operators \(G_{k}\) admit a uniform quadratic expansion
		\begin{equation}
		G_{k}(s) = \alpha_{k} s^{2} + o(s^{2});
		\label{eq:quadratic-ifs}
		\end{equation}
		\item The dynamics near the tangency set decomposes into a finite number of branches.
	\end{enumerate}
	
	Then, after the logarithmic change of variable
	\begin{equation}
	u = -\log |s|, \quad s \neq 0,
	\label{eq:log-change}
	\end{equation}
	the dynamics satisfies
	\begin{equation}
	u_{k+1} = 2u_{k} - \log \alpha_{k} + o(1).
	\label{eq:affine-dynamics}
	\end{equation}
\end{theorem}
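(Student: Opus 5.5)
The plan is to take logarithms of the uniform quadratic expansion \eqref{eq:quadratic-ifs}, working branch by branch. Assumptions 1 and 3 let me fix, for each step $k$, the tangency point $p$ (one of finitely many isolated branches) near which the orbit lies, and use the geodesic coordinate $s_k$ centered at that point. Then $s_{k+1}=G_k(s_k)$ is a single scalar recurrence, and the index of the branch only enters through the coefficient $\alpha_k$. By Theorem~\ref{thm:quadratic-convergence} and Theorem~\ref{thm:superexp}, the orbit stays in the tangency neighborhood and $s_k\to 0$. So it suffices to prove the statement along orbits with $s_k\neq 0$, where $u_k$ is defined, in the regime $s_k\to 0$.

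The key step is to rewrite \eqref{eq:quadratic-ifs} multiplicatively:
\[
G_k(s)=\alpha_k s^2\bigl(1+\varepsilon_k(s)\bigr),\qquad \varepsilon_k(s):=\frac{G_k(s)-\alpha_k s^2}{\alpha_k s^2}.
\]
This requires $\alpha_k$ to be bounded away from $0$. I would import this from Theorem~\ref{thm:alpha-bounds}, adding its hypotheses as the standing setting, which is how "uniform quadratic expansion" should be read. With $\alpha_k\ge\alpha_{\min}>0$, the uniformity in assumption 2 gives $\sup_k|\varepsilon_k(s)|\le \omega(|s|)/\alpha_{\min}$ for some modulus $\omega(t)\to 0$ as $t\to0$. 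Taking absolute values and then $-\log$, I get
\[
u_{k+1}=2u_k-\log\alpha_k-\log\bigl|1+\varepsilon_k(s_k)\bigr|.
\]
Since $|\log|1+\varepsilon||\le 2|\varepsilon|$ for $|\varepsilon|\le 1/2$, the last term is bounded by $2\omega(|s_k|)/\alpha_{\min}$. This tends to $0$ because $s_k\to0$, which gives \eqref{eq:affine-dynamics}. Equivalently, the error is $o(1)$ as $u_k\to\infty$, uniformly in $k$ and in the branch.

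I expect the main obstacle to be the uniformity of the remainder and the nonvanishing of the iterates. The $o(s^2)$ in Theorem~\ref{thm:quadratic-tangency} is a priori pointwise in $k$. I would argue, as in the proof of Theorem~\ref{thm:quadratic-convergence}, that uniform $C^3$ bounds on the boundaries give a uniform modulus $\omega$. Otherwise I would simply take this as the content of hypothesis 2.

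Separately, $s_{k+1}$ could vanish even though $s_k\neq0$, which would make $u_{k+1}$ undefined. However, once $|s|$ is small enough that $|\varepsilon_k(s)|<1/2$, the factorization gives $G_k(s)\neq0$ whenever $s\neq0$. So after shrinking $r$, every orbit with $s_0\neq 0$ keeps $s_k\neq0$ for all $k$. Finally, finiteness of the branches (assumption 3) makes the constants uniform over the symbol visited at each step. Solving for $u_k$ then yields the inverse branches $u\mapsto (u+\log\alpha_k)/2+o(1)$, which are contractions of ratio $1/2$ up to the vanishing error, as in Theorem~\ref{thm:toy-model}(iv).
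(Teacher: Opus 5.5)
Your proposal is correct and follows the same route as the paper: the paper takes $-\log$ of the quadratic law to obtain $u_{k+1}=2u_k-\log\alpha_k$ and simply asserts that the $o(s^2)$ remainder becomes an $o(1)$ correction. Your factorization $G_k(s)=\alpha_k s^2(1+\varepsilon_k(s))$ makes that assertion rigorous, and in doing so it correctly brings out three things the paper's sketch uses tacitly: a uniform lower bound $\alpha_k\ge\alpha_{\min}>0$ (as in Theorem~\ref{thm:alpha-bounds}), a remainder modulus that is uniform in $k$, and non-vanishing of the iterates, so that $u_{k+1}$ is defined.
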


\begin{proof}[Proof sketch]
	Let \(s_{k}\) be sufficiently small and define \(u_{k} = -\log |s_{k}|\). Using~\eqref{eq:quadratic-ifs}, we obtain
	\[
	u_{k+1}
	=
	-\log |s_{k+1}|
	=
	-\log (\alpha_{k} s_{k}^{2})
	=
	2u_{k} - \log \alpha_{k}.
	\]
	The remainder term \(o(s^{2})\) induces an \(o(1)\) correction in logarithmic coordinates.
\end{proof}

\subsection{Asymptotic contraction structure}

Rewriting~\eqref{eq:affine-dynamics} backward, we obtain the approximate inverse dynamics
\begin{equation}
u \mapsto \frac{u + \log \alpha_{k}}{2} + o(1).
\label{eq:ifs-maps}
\end{equation}

Thus, each branch acts asymptotically as a contraction of ratio \(1/2\) in logarithmic coordinates.

\begin{remark}
	Unlike classical iterated function systems, the maps depend on the level \(k\).
	The system is therefore \emph{non-autonomous} and does not define a strict IFS.
	However, when the coefficients \(\alpha_k\) converge, the system becomes asymptotically stationary.
\end{remark}

\subsection{Symbolic dynamics and branching}

At each level \(k\), the boundary \(\partial C_{k}\) may contain several tangency points
\(p_{k,1}, \ldots, p_{k,m_{k}}\). Each tangency region defines a local branch of the dynamics.

Thus, each trajectory can be encoded by a sequence of branch indices, yielding a symbolic space
\[
\Sigma = \{1,\dots,m\}^{\mathbb{N}}
\]
when the number of branches stabilizes.

Each infinite sequence corresponds to a trajectory converging toward the limit set \(\Lambda\).

\subsection{Limit autonomous system}

In the asymptotic regime where
\[
\alpha_k \to \alpha_i > 0
\quad \text{along each branch},
\]
the inverse dynamics~\eqref{eq:ifs-maps} converges uniformly on compact sets to a finite family of affine contractions
\begin{equation}
F_i(u) = \frac{u + \log \alpha_i}{2}, \qquad i = 1,\dots,m.
\end{equation}

This defines an autonomous iterated function system in logarithmic coordinates.

\begin{remark}
	The original dynamics can therefore be viewed as a non-autonomous perturbation of this limiting affine IFS.
\end{remark}

\subsection{Invariant measure}

\begin{theorem}[Asymptotic invariant measure]
	\label{thm:measure}
	Assume that:
	\begin{itemize}
		\item the number of branches is constant and equal to \(m\);
		\item the coefficients satisfy \(\alpha_k \to \alpha_i > 0\) along each branch;
		\item the remainder terms in~\eqref{eq:quadratic-ifs} are uniformly controlled.
	\end{itemize}
	
	Then the limiting affine system \(F_i\) admits a unique invariant Bernoulli probability measure \(\mu\)
	in logarithmic coordinates.
	
	Moreover, this measure induces, via the inverse transformation \(s = e^{-u}\) and the geometric charts,
	a probability measure supported on the limit set \(\Lambda\).
\end{theorem}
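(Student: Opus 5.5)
The plan is to treat the limiting system as a classical contractive IFS and obtain the measure from the Hutchinson contraction argument. Then transport it back to the plane and show that the asymptotic estimates of Theorem~\ref{thm:ifs} and the double-exponential convergence of Theorem~\ref{thm:superexp} force its support into \(\Lambda\).

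\textbf{Step 1: invariant measure for the limiting system.} By the second hypothesis, the inverse dynamics \eqref{eq:ifs-maps} converges to the affine maps \(F_i(u)=\tfrac12(u+\log\alpha_i)\), \(i=1,\dots,m\). Each \(F_i\) has Lipschitz constant \(1/2\) by Theorem~\ref{thm:toy-model}(iv). Fix the uniform Bernoulli weights \(p_i=1/m\), or any probability vector \((p_i)\). Define the Markov operator on Borel probability measures
\[
\mathcal{M}\nu=\sum_{i=1}^m p_i\,(F_i)_\#\nu .
\]
The first task is to show that \(\mathcal{M}\) is a contraction of ratio \(1/2\) for the Wasserstein-1 (Kantorovich--Rubinstein) metric. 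This holds on the complete space of measures with finite first moment, and also on \(\mathcal{P}(K)\) for the compact interval \(K\) given by the attractor of \(\{F_i\}\). By Banach's fixed point theorem there is a unique invariant measure \(\mu\). Equivalently, \(\mu=\pi_\#(p^{\mathbb{N}})\), where \(\pi:\Sigma\to\mathbb{R}\), \(\pi(i_0i_1\dots)=\lim_n F_{i_0}\circ\cdots\circ F_{i_n}(u)\), is the coding map. This representation justifies the word ``Bernoulli''. Uniqueness is understood for the fixed weight vector.

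\textbf{Step 2: transport to the plane.} Push \(\mu\) forward by \(u\mapsto s=e^{-u}\). Then use the geodesic charts \(s\mapsto\) point of \(\partial C_k\) near \(p_{k,i}\); these are continuous by the \(C^3\) regularity. This yields probability measures \(\mu_k\) on \(\partial C_k\). By compactness of \(\overline{C_0}\) (Theorem~\ref{thm:limit-set-compact}) and Prokhorov's theorem, some subsequence \(\mu_{k_j}\) converges weakly to a probability measure \(\bar\mu\) on \(\overline{C_0}\). This \(\bar\mu\) is the induced measure.

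\textbf{Step 3 (main obstacle): \(\operatorname{supp}\bar\mu\subset\Lambda\).} Take a point in the support. It is a limit of chart images of points \(u=\pi(\omega)\) at levels \(k_j\to\infty\). The key is to realize each such point as a genuine trajectory point \(H_{k}(c_0)\). To do this, use the uniform control of the remainders: the \(o(1)\) in \eqref{eq:affine-dynamics} is uniform in \(k\). Combined with \(\alpha_k\to\alpha_i\), this allows a shadowing argument. The backward compositions of the true inverse branches are non-autonomous \(1/2\)-contractions up to errors \(\varepsilon_k\to0\), so they stay within \(\sum 2^{-j}\varepsilon_{k-j}\to0\) of the limiting compositions. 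Hence for each symbolic word there are true orbits whose log-coordinates approach \(\pi(\omega)\). Their planar positions then converge, and closedness of \(\Lambda\) (Theorem~\ref{thm:limit-set-compact}) gives the inclusion. I expect the delicate point to be reconciling the chart at level \(k\) with a fixed planar location. Here I would use that tangency points converge, for instance using \(T\) and Lemma~\ref{lem:GNP-stability}. If that fails, I would weaken the conclusion to support in the closure of the tangency accumulation set, which is contained in \(\Lambda\) by Remark~\ref{rem:concentration}.
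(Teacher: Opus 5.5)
Your Step~1 matches the paper's argument (Hutchinson's theorem for the $\tfrac12$-contractions $F_i$), done more carefully. Your caveat that uniqueness only holds for a fixed weight vector is a correction the statement needs. The gap is in Step~3. The paper's sketch also only asserts this step (``the change of variables $s=e^{-u}$ \dots\ transfers this measure to $\Lambda$''). Your shadowing repair cannot work, because the attractor $K$ of $\{F_i\}$ lies exactly where the quadratic normal form says nothing. Each $F_i$ has fixed point $\log\alpha_i$, so $K\subset[\min_i\log\alpha_i,\max_i\log\alpha_i]$ and $e^{-K}\subset[1/\max_i\alpha_i,\,1/\min_i\alpha_i]$. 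These are the \emph{repelling} fixed points $s=1/\alpha_i$ of $s\mapsto\alpha_i s^2$.

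The $o(1)$ in \eqref{eq:affine-dynamics} comes from $o(s^2)$ as $s\to0$, so it is small only as $u\to+\infty$; uniformity in $k$ gives nothing at bounded $u$. Trapped orbits have $|s_k|<r$ with $r\max_i\alpha_i<1$ (Theorems~\ref{thm:toy-model} and~\ref{thm:quadratic-convergence}). Hence $u_k>-\log r>\max K$ and $u_k\to+\infty$, so they never approach $K$. Even in the exact model $s_{k+1}=\alpha_{i_k}s_k^2$ one has
\[
u_0=F_{i_0}\circ\cdots\circ F_{i_{k-1}}(u_k)=2^{-k}u_k+\sum_{j=0}^{k-1}2^{-j-1}\log\alpha_{i_j},
\qquad\text{so}\qquad
u_0=\pi(\omega)+\lim_{k\to\infty}2^{-k}u_k ,
\]
with your coding map $\pi$. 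The last limit equals $u_0-\pi(\omega)>0$ for every orbit converging to the tangency point. Your bound $\sum_j 2^{-j}\varepsilon_{k-j}$ needs small errors along the whole backward composition, which forces $u_k\ge c\,2^k$ for some $c>0$, and then $2^{-k}u_k$ does not vanish. Making it vanish forces the orbit through the region $u=O(1)$, where no error bound exists. In the model, the orbits with $u_0=\pi(\omega)$ are exactly those with $u_k=\pi(\sigma^k\omega)\in K$ for all $k$ ($\sigma$ the shift). This is a Julia-type repeller separating orbits that converge to the tangency point ($u_0>\pi(\omega)$) from those that leave its neighbourhood ($u_0<\pi(\omega)$), which is the opposite of what $\Lambda$ collects.

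Consequently the measures $\mu_k$ of your Step~2 sit at geodesic distance at least $1/\max_i\alpha_i>r$ from $p_{k,i}$. That is outside the neighbourhood where the charts and the normal form are controlled, and nothing drives $\operatorname{supp}\bar\mu$ toward the tangency set, so your fallback fails too. Three further points would need fixing in any version.
\begin{itemize}
\item Realizing \emph{every} word $\omega$ by true orbits needs global information. By Theorem~\ref{thm:quadratic-convergence} a trapped orbit stays at its tangency point, so the local analysis produces no branch switching.
\item A limit of points $H_k(c_0^{(k)})$ with \emph{varying} initial points need not lie in $\Lambda$ as defined in Definition~\ref{def:limit-set}. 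So closedness of $\Lambda$ does not give the inclusion, because the approximating points are not themselves in $\Lambda$.
\item The coordinate $u=-\log|s|$ forgets both the branch index and the sign of $s$, so ``the chart near $p_{k,i}$'' is not determined by the measure $\mu$ on $\mathbb{R}$.
\end{itemize}
What could plausibly be transported to $\Lambda$ is the Bernoulli measure $p^{\mathbb{N}}$ on $\Sigma$. It would go through a map sending an itinerary to the limit point of a trajectory realizing it, once such a map is shown to exist, and not as $\pi_\#p^{\mathbb{N}}$ through $s=e^{-u}$.
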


\begin{proof}[Proof sketch]
	The affine maps \(F_i\) are strict contractions of ratio \(1/2\), so by Hutchinson's theorem
	they admit a unique invariant probability measure.
	
	The convergence of the non-autonomous system to the autonomous one ensures that the associated transfer operators converge,
	yielding the invariant measure in the limit.
	
	The change of variables \(s = e^{-u}\), combined with the conformal parametrizations of Section~4,
	transfers this measure to the geometric limit set \(\Lambda\).
\end{proof}

\begin{remark}
	This measure can be interpreted as a Gibbs-type measure associated with the symbolic dynamics,
	with weights determined by the coefficients \(\alpha_i\).
\end{remark}

\section{Example: Ford Circles}

Ford circles provide a classical configuration of mutually tangent circles associated with rational numbers.
Although they do not form a nested sequence of convex sets in the sense of Section~2,
they offer a useful geometric model illustrating the tangency mechanisms underlying the present theory \cite{khinchin,series}.

For each rational number \(p/q\) with \(q > 0\) and \(\gcd(p, q) = 1\), the corresponding Ford circle \(C_{p/q}\) is defined as the circle centered at
\[
\left(\frac{p}{q}, \frac{1}{2q^{2}}\right)
\]
with radius
\[
r_{p/q} = \frac{1}{2q^{2}}.
\]

Two Ford circles \(C_{p/q}\) and \(C_{p'/q'}\) are tangent whenever
\[
|p q' - p' q| = 1,
\]
i.e., when the corresponding fractions are neighbors in the Farey sequence.

\subsection{Geometric interpretation}

Let \(x \in (0,1)\) be irrational and let \(p_{k}/q_{k}\) be its continued fraction convergents.
The corresponding sequence of Ford circles \(C_{k} := C_{p_{k}/q_{k}}\) forms a chain of pairwise tangent circles.

Although this configuration is not nested, consecutive circles are tangent,
and therefore exhibit the same local second-order geometry as in the nested tangency framework.

The curvature of the Ford circle \(C_{p/q}\) is
\[
\kappa = \frac{1}{r_{p/q}} = 2 q^{2},
\]
so that the curvature grows quadratically with the denominator.

\subsection{Connection with continued fractions}

\begin{proposition}[Arithmetic control of curvature growth]
	\label{thm:ford}
	Let \(p_k/q_k\) be the convergents of an irrational number \(x\).
	Then the ratio of successive curvatures satisfies
	\[
	\frac{\kappa_{k+1}}{\kappa_k}
	=
	\left(\frac{q_{k+1}}{q_k}\right)^2.
	\]
	In particular, using the recurrence
	\[
	q_{k+1} = a_{k+1} q_k + q_{k-1},
	\]
	one obtains
	\[
	\frac{\kappa_{k+1}}{\kappa_k} \sim a_{k+1}^2
	\]
	when \(a_{k+1}\) is large.
\end{proposition}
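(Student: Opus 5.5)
The identity follows directly from the formula for the curvature of a Ford circle. Since $\kappa_k = 1/r_{p_k/q_k} = 2q_k^2$, the ratio of successive curvatures is
\[
\frac{\kappa_{k+1}}{\kappa_k} = \frac{2q_{k+1}^2}{2q_k^2} = \left(\frac{q_{k+1}}{q_k}\right)^2 .
\]
The only inputs are the definition of the Ford circles and the fact that $q_k>0$, so this part is immediate.

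For the asymptotic statement, I will use the standard continued fraction recurrence
\[
q_{k+1} = a_{k+1}q_k + q_{k-1}, \qquad q_{-1}=0,\ q_0=1,\ a_j\ge 1 .
\]
Dividing by $q_k$ gives
\[
\frac{q_{k+1}}{q_k} = a_{k+1} + \frac{q_{k-1}}{q_k}.
\]
Because the denominators are nondecreasing for $k\ge 1$ (and $q_{-1}=0$ handles the base case), we have $0\le q_{k-1}/q_k\le 1$. This yields the two-sided bound
\[
a_{k+1}^2 \le \frac{\kappa_{k+1}}{\kappa_k} \le (a_{k+1}+1)^2 .
\]
Dividing through by $a_{k+1}^2$ gives
\[
1 \le \frac{\kappa_{k+1}}{\kappa_k\,a_{k+1}^2} \le \left(1+\frac{1}{a_{k+1}}\right)^2 ,
\]
and the right-hand side tends to $1$ as $a_{k+1}\to\infty$. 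This is exactly the meaning of $\kappa_{k+1}/\kappa_k \sim a_{k+1}^2$ for large partial quotients.

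The one point needing care is the monotonicity $q_{k-1}\le q_k$. It follows from the recurrence by induction, since every $a_j\ge 1$ and all the $q_j$ are nonnegative.

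It is worth stating the error term explicitly, because the asymptotic equivalence is uniform in $k$: it depends only on the size of $a_{k+1}$, not on the index. Since $0\le q_{k-1}/q_k\le 1$, we have
\[
\frac{\kappa_{k+1}}{\kappa_k} = a_{k+1}^2 + O(a_{k+1}),
\]
with implied constant at most $3$. This form is convenient for later use in the curvature ratios entering $\alpha_k$.
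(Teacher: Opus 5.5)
Your proof is correct and follows the paper's own reasoning: the paper states this proposition without a separate proof, relying on $\kappa = 2q^2$ (given just before) and the recurrence $q_{k+1}=a_{k+1}q_k+q_{k-1}$ quoted in the statement. Your two-sided bound $a_{k+1}^2\le \kappa_{k+1}/\kappa_k\le (a_{k+1}+1)^2$, which follows from $0\le q_{k-1}/q_k\le 1$, makes the meaning of ``$\sim$'' precise and shows it holds uniformly in $k$.
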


\begin{remark}
	This shows that the second-order geometric data (curvature ratios) are directly controlled by the continued fraction digits.
	Thus, the arithmetic structure of the number \(x\) is encoded in the geometry of the corresponding Ford circles.
\end{remark}

\subsection{Heuristic link with quadratic dynamics}

In the nested convex framework, the dynamics near tangency points satisfies
\[
s_{k+1} \approx \alpha_k s_k^2.
\]

In the Ford circle configuration, although a global return map is not defined,
the rapid growth of curvature and the relation
\[
\left| x - \frac{p_k}{q_k} \right| \sim \frac{1}{q_k^2}
\]
exhibit an analogous quadratic scaling.

This suggests that the approximation process of continued fractions
can be viewed as an arithmetic analogue of the quadratic contraction mechanism.

\subsection{Relation with the Gauss map}

The continued fraction expansion is generated by the Gauss map
\[
T(x) = \left\{ \frac{1}{x} \right\}.
\]

The digits \(a_k\) determine the growth of the denominators \(q_k\),
and therefore the curvature ratios of successive Ford circles.

In this sense, the Gauss map governs the sequence of geometric parameters
that would appear in the quadratic tangency law in a nested setting.

\begin{remark}
	This correspondence is heuristic: the Ford circle configuration does not define
	a dynamical system of the type studied in this paper, but rather provides a geometric
	model illustrating how arithmetic data can control second-order geometric quantities.
\end{remark}

\subsection{Interpretation of convergence}

The estimate
\[
\left| x - \frac{p_k}{q_k} \right| \sim \frac{1}{q_k^2}
\]
shows that rational approximations improve quadratically in \(q_k\).

This mirrors the recurrence
\[
s_{k+1} \sim \alpha_k s_k^2,
\]
where each step squares the error, leading to super-exponential convergence.

Thus, the continued fraction algorithm can be interpreted as an arithmetic process
exhibiting the same qualitative behavior as the quadratic tangency dynamics.

\subsection{Remark on dimension}

Unlike the nested convex configurations studied in this paper,
the Ford circle construction does not produce a limit set defined by a non-autonomous
boundary dynamics.

Therefore, no direct Hausdorff dimension statement analogous to the previous sections
applies here.

However, the associated symbolic dynamics (continued fractions) is known to generate
sets of full dimension in \([0,1]\), reflecting the richness of the underlying arithmetic structure.

\section{Example: Nested Stadia with Flat and Curved Boundary Components}

We now examine a configuration where the convex sets \(C_{k}\) are stadium shapes (stadia), i.e., convex bodies whose boundary consists of two parallel line segments (flat parts) and two semicircles (curved parts). This example illustrates the interplay between quadratic dynamics at tangency points (on the curved parts) and linear dynamics elsewhere (on the flat parts).

\begin{remark}[Smoothing of non-\(C^3\) boundaries]
	\label{rem:smoothing}
	The stadium has junction points where the boundary is only \(C^{1}\) and not \(C^{2}\).
	However, one can approximate \(C_k\) by strictly convex \(C^{3}\) domains with arbitrarily small perturbations in the \(C^{2}\) topology.
	
	The quadratic tangency law established in Theorem~\ref{thm:quadratic-tangency}
	depends only on local second-order geometric data (curvatures and thickness)
	at tangency points. These quantities are preserved under such smoothing.
	
	Moreover, the analysis is carried out in a neighborhood of the tangency points,
	which can be chosen away from the junctions. Standard stability arguments then show that
	the qualitative dynamics (quadratic contraction and super-exponential convergence)
	is unchanged by the regularization.
	
	Thus, the stadium configuration provides a faithful model for the dynamics,
	despite the lack of global \(C^{3}\) regularity.
\end{remark}

\subsection{Definition of stadia and regularization}

For each \(k \geq 0\), define the stadium \(C_{k}\) as the convex hull of two semicircles of radius \(r_{k}\) centered at \((\pm L_{k}/2, 0)\), connected by horizontal segments:
\[
C_{k}
=
\bigl\{(x,y): |y|\le r_{k},\, |x|\le L_{k}/2\bigr\}
\cup
\bigl\{(x,y): (x\mp L_{k}/2)^{2}+y^{2}\le r_{k}^{2}\bigr\}.
\]

The boundary \(\partial C_{k}\) consists of:
\begin{itemize}
	\item two flat segments (\(\kappa = 0\));
	\item two semicircles (\(\kappa = 1/r_{k} > 0\));
	\item junction points where the boundary is only \(C^{1}\).
\end{itemize}

To fit the \(C^{3}\) framework of Section~2, one may smooth the junctions with arbitrarily small \(C^{3}\) perturbations. This regularization does not affect the local dynamics in neighborhoods
of tangency points.

\subsection{Tangency condition}

Assume
\begin{gather*}
L_{k+1} = L_{k}, \\
r_{k+1} = \lambda r_{k}, \quad 0<\lambda<1,
\end{gather*}
and translate \(C_{k+1}\) so that its semicircles are internally tangent to those of \(C_{k}\).

There are exactly two tangency points at each level, located on the semicircles. No tangency occurs on the flat segments.

\subsection{Local dynamics near a tangency point}

Near a tangency point on a semicircle, the geometry reduces locally to two tangent circles. By Theorem~\ref{thm:quadratic-tangency}, the transition operator satisfies
\begin{equation}
G_{k}(s) = \alpha_{k} s^{2} + o(s^{2}).
\label{eq:stadium-quadratic}
\end{equation}

Since \(\kappa_{k} = 1/r_{k}\), we obtain
\[
\frac{1}{\kappa_{k}} - \frac{1}{\kappa_{k+1}} = r_{k} - r_{k+1}.
\]

Thus,
\[
\alpha_{k}
=
\frac{1}{2}(r_{k}-r_{k+1})\frac{d_{k}^{2}}{R_{k}},
\]
up to higher-order geometric corrections.

Therefore, near tangency points, the dynamics is quadratically contracting and leads to super-exponential convergence.

\subsection{Dynamics on the horizontal segments}

Let \(c_{k}\) lie on a flat segment of \(\partial C_{k}\). Since the curvature vanishes and there is no tangency with \(\partial C_{k+1}\), the cancellation of the linear term does not occur.

Using the first-order expansion of the return map, one expects locally
\begin{equation}
G_{k}(s) = \gamma_{k} s + o(s),
\label{eq:stadium-linear}
\end{equation}
where \(\gamma_k\) depends on the geometry of the domains.

\begin{remark}
	Unlike the quadratic regime, the contraction on flat parts is not universal and may depend on the global geometry of \(\Omega_k\). In particular, uniform contraction \(0<|\gamma_k|<1\) may require additional geometric assumptions.
\end{remark}

Thus, on flat parts, the dynamics is at most linearly contracting, in contrast with the quadratic regime near tangency points.

\subsection{Global dynamics and limit set}

The dynamics combines two regimes:
\begin{itemize}
	\item Quadratic contraction near tangency points (curved parts);
	\item Linear or weaker contraction on flat segments.
\end{itemize}

Due to the super-exponential nature of the quadratic regime, trajectories that enter a neighborhood of a tangency point remain trapped and dominate the asymptotic behavior.

\subsection{Asymptotic scenarios (heuristic)}

The structure of the limit set depends on the relative scaling of \(L_{k}\) and \(r_{k}\).

\paragraph{Case 1: \(L_{k}=L>0\), \(r_{k}\to 0\).}
The curved parts shrink, and the geometry degenerates toward a segment. In this regime, accumulation points lie on
\[
[-L/2,L/2]\times\{0\}.
\]

\paragraph{Case 2: \(L_{k}\to 0\) sufficiently fast.}
The flat parts become negligible, and the dynamics is dominated by the quadratic branches. The limit set is then expected to exhibit a Cantor-type structure generated by the tangency dynamics.

\paragraph{Case 3: \(L_{k}\sim r_{k}\).}
Both mechanisms interact. The resulting limit set reflects a competition between linear and quadratic contraction and may exhibit mixed geometric features.

\begin{remark}
	These scenarios are heuristic and describe the expected geometric behavior. A rigorous classification would require quantitative control of the contraction rates in both regimes.
\end{remark}

\subsection{Conclusion}

This example shows that:
\begin{enumerate}
	\item Quadratic contraction occurs precisely at positive-curvature tangency points;
	\item Zero-curvature regions generically induce linear or weaker contraction;
	\item The global limit set reflects the competition between these mechanisms.
\end{enumerate}

\begin{remark}[Renormalization interpretation]
	The dynamics can be interpreted as a geometric renormalization process: each iteration focuses on smaller neighborhoods of tangency points.
	
	In logarithmic coordinates, the quadratic map corresponds asymptotically to an affine transformation with slope \(2\), revealing a universal scaling mechanism underlying the dynamics.
\end{remark}

\section{Conclusion and Outlook}

We have developed a geometric framework for the study of dynamical systems generated by nested convex bodies under a tangency condition. The central mechanism of the theory is the \emph{quadratic tangency law} (Theorem~\ref{thm:quadratic-tangency}), which shows that the transition operators admit a second-order expansion with vanishing linear term at tangency points. This leads to a fundamentally nonlinear contraction mechanism.

As a consequence, trajectories converge toward the tangency set at a super-exponential rate (Theorem~\ref{thm:superexp}), and the long-term behavior of the system is governed by an intrinsic quadratic dynamics. In logarithmic coordinates, this dynamics becomes approximately affine, which allows one to interpret the limit set as the attractor of an iterated function system. This provides a unified explanation for the emergence of fractal structures in these configurations.

The examples developed throughout the paper illustrate the richness of the theory:
\begin{itemize}
	\item The Ford circle configuration reveals a close connection between quadratic geometric dynamics and continued fractions, with the contraction coefficients reflecting the arithmetic of the Gauss map.
	\item The nested ellipse construction exhibits a purely quadratic regime with two branches, leading to a Cantor-type limit set whose symbolic dimension is $\log_2 2 = 1$ (i.e., the symbolic attractor is a shift of finite type on two symbols).
	\item The stadium example highlights the coexistence of quadratic and linear regimes, showing how curvature acts as a selector between nonlinear and linear dynamics.
	\item The rounded triangle provides a purely quadratic configuration with three independent branches, yielding a fractal limit set of dimension \(\log_2 3\) in the asymptotically self-similar regime.
\end{itemize}

A key methodological contribution of this work is the use of conformal parametrizations and complex angular variables (Section~4). Proposition~4.1 shows that the quadratic tangency law can be interpreted as a consequence of the composition of conformal maps tangent to the identity. This complex-analytic viewpoint connects the present framework to holomorphic dynamics and provides a natural bridge to renormalization and iteration theory.

Several directions for further investigation naturally arise.

\begin{itemize}
	\item \textbf{Higher dimensions.} Extending the theory to convex bodies in \(\mathbb{R}^{N}\), where multiple principal curvatures interact and may lead to richer dynamical behavior.
	\item \textbf{Geometric packings.} Investigating connections with circle and sphere packings, including Apollonian configurations, where nested tangency structures are ubiquitous.
	\item \textbf{Transfer operators and statistics.} Studying the spectral properties of the transfer operators associated with the induced IFS, in order to understand statistical aspects such as invariant measures and decay of correlations.
	\item \textbf{Low regularity.} Relaxing the \(C^{3}\) assumptions and analyzing the dynamics for piecewise smooth or \(C^{1,\alpha}\) boundaries, where mixed regimes similar to the stadium example naturally arise.
\end{itemize}
\begin{remark}[Relation with holomorphic dynamics and fractal limit sets]
	The quadratic tangency law established in this work leads to a local normal form
	\[
	s_{k+1} \sim \alpha_k s_k^2,
	\]
	which is reminiscent of the iteration of quadratic maps in holomorphic dynamics,
	such as \(z \mapsto z^2 + c\), studied by \cite{douady1985, hubbard1993}.
	
	However, the present framework differs in two essential aspects.
	First, the dynamics is non-autonomous: the coefficients \(\alpha_k\) vary with the geometry.
	Second, the presence of multiple tangency branches naturally leads to a system of
	inverse contractions in logarithmic coordinates, giving rise to an iterated function
	system (IFS).
	
	As a consequence, the limit set \(\Lambda\) constructed in Section~11 plays a role
	analogous to Julia sets in complex dynamics, but arises from a purely geometric
	mechanism based on tangency and curvature.
	
	In particular, the doubling mechanism
	\[
	u_{k+1} \approx 2u_k - \log \alpha_k
	\]
	reflects the same renormalization structure underlying quadratic complex dynamics,
	while the branching structure leads to fractal limit sets whose dimension is governed
	by the number of tangency branches.
\end{remark}

More broadly, the results suggest that tangency geometry provides a natural mechanism for generating nonlinear dynamical systems with strong contraction and fractal limit sets. The interplay between curvature, quadratic dynamics, and symbolic branching structures points toward a unified framework connecting convex geometry, dynamical systems, and complex analysis.

This perspective opens the possibility of further developments at the interface of geometric dynamics, conformal methods, and fractal geometry.
\subsection*{Perspectives on Hybrid Quantum Cryptography}

Although the framework developed in this article is purely classical, the underlying dynamical mechanisms—namely the quadratic tangency law, super-exponential convergence, and the iterated function system (IFS) representation—suggest potential applications in hybrid quantum cryptography.

More precisely, the local dynamics
\[
s_{k+1} \approx \alpha_k s_k^2
\]
induces, in logarithmic coordinates, an affine iteration exhibiting exponential amplification of information. In the multi-branch setting, the associated symbolic dynamics generates a tree of trajectories, where the selection of a specific path depends sensitively on the initial condition.

In this context, a source of quantum randomness—for instance, a short key generated by a quantum key distribution (QKD) protocol—could be used to select an initial branch, or a sequence of branches, within this symbolic structure.

The quadratic nonlinearity then acts as an amplification mechanism: a microscopic uncertainty in the initial condition is transformed into a long, highly sensitive, and structurally complex trajectory. At the same time, the super-exponential contraction toward the tangency set makes the inverse reconstruction of the symbolic path extremely unstable.

From an information-theoretic perspective, this combination of symbolic expansion (branching) and geometric contraction suggests a natural mechanism for information hiding: the relevant information is encoded in the choice of branches, while the geometric dynamics tends to suppress observable differences in the state space.

These properties open the way to hybrid schemes in which a small amount of quantum randomness is amplified by a nonlinear classical dynamical system, with potential applications to key generation, key derivation, or post-processing in post-quantum cryptographic architectures.

% ============================================================
% APPENDIX C: STADIUM - MIXED DYNAMICS (QUADRATIC + LINEAR)
% ============================================================

\appendix
\section{Appendix A: Detailed analysis of the stadium -- mixed dynamics}

\subsection{ Geometric reminder of the stadium}

Let \(C_k\) be the stadium defined in Section~13. The boundary \(\partial C_k\) consists of:
\begin{itemize}
	\item two horizontal segments: \(y = \pm r_k\), \(|x| \le L_k/2\), with curvature \(\kappa = 0\);
	\item two semicircles (right and left): curvature \(\kappa_k = 1/r_k > 0\);
	\item four \(C^1\) junction points (discontinuous curvature).
\end{itemize}

\subsection{ Nested tangency condition}

We assume
\[
L_{k+1} = L_k, \qquad r_{k+1} = \lambda r_k \quad \text{with } 0 < \lambda < 1,
\]
and translate \(C_{k+1}\) so that its semicircles are internally tangent to those of \(C_k\).

There are exactly two tangency points per level, located on the semicircles.

\paragraph{Remark on the \(C^1\) junction.}
At the junction points between semicircles and segments, the curvature is discontinuous but the normal is continuous. This ensures that the return map remains continuous and no singular behavior is introduced at these points. The quadratic and linear regimes therefore match without singular effects.

\subsection{ Application of the quadratic tangency law}

\subsubsection{On the semicircles (positive curvature)}

Let \(s\) be the geodesic coordinate centered at a tangency point. By Theorem~\ref{thm:quadratic-tangency},
\begin{equation}
G_k(s) = \alpha_k s^2 + o(s^2),
\label{eq:appendix-quadratic}
\end{equation}
with
\[
\alpha_k
=
\frac{1}{2}(r_k - r_{k+1}) \frac{d_k(p)^2}{R_k}
=
\frac{1}{2}(1-\lambda) r_k \frac{d_k(p)^2}{R_k}.
\]

\paragraph{Dependence on \(\lambda\).}
The parameter \(\lambda\) controls the magnitude of the quadratic contraction via \(\alpha_k\).
As long as \(\alpha_k\) remains uniformly bounded above and below away from zero,
the qualitative behavior (quadratic contraction) is preserved.

\subsubsection{Consequence: super-exponential contraction}

From the quadratic recurrence, one obtains for sufficiently small initial data
\begin{equation}
|s_k| \le C \rho^{2^k},
\label{eq:appendix-superexp}
\end{equation}
for suitable constants \(C>0\), \(0<\rho<1\).
Thus convergence toward tangency points is super-exponential.

\subsection{ Dynamics on the flat parts}

On the horizontal segments, the curvature vanishes and no tangency occurs.
Using the first-order expansion of the return map, one expects locally
\begin{equation}
G_k(s) = \gamma_k s + o(s),
\label{eq:appendix-linear}
\end{equation}
where \(\gamma_k\) depends on the geometry of \(\Omega_k\).

\begin{remark}
	In contrast with the quadratic regime, the contraction on flat parts is not universal.
	In particular, the condition \(0 < |\gamma_k| < 1\) may require additional geometric assumptions.
\end{remark}

Thus, on flat regions, the dynamics is at most linearly contracting.

\subsection{ Limit set and asymptotic structure}

We define the limit set \(\Lambda\) as the set of accumulation points of trajectories
(as in Section~11).

The dynamics combines two mechanisms:
\begin{itemize}
	\item Quadratic contraction near tangency points (curved parts);
	\item Linear or weaker contraction on flat segments.
\end{itemize}

Due to the super-exponential nature of the quadratic regime,
trajectories entering neighborhoods of tangency points remain trapped
and dominate the asymptotic behavior.

\subsection{ Asymptotic scenarios (heuristic)}

The structure of the limit set depends on the relative scaling of \(L_k\) and \(r_k\).

\paragraph{Case 1: \(L_k \to L_\infty > 0\).}
The flat parts persist, and accumulation points lie on
\[
[-L_\infty/2, L_\infty/2]\times\{0\}.
\]
Thus, the limit set contains a nontrivial interval.

\paragraph{Case 2: \(L_k \to 0\) sufficiently fast.}
The flat parts become negligible, and the dynamics is dominated by the two tangency branches.
The limit set is then expected to exhibit a Cantor-type structure generated by quadratic contraction.

\paragraph{Case 3: \(L_k \sim r_k\).}
Both mechanisms coexist. The resulting limit set reflects a competition between linear and quadratic contraction
and may display mixed geometric features.

\begin{remark}
	These scenarios are heuristic. A rigorous classification would require quantitative control
	of contraction rates in both regimes and their interaction.
\end{remark}

\begin{figure}[htbp]
	\centering
	\begin{tikzpicture}[level distance=1.5cm, level 1/.style={sibling distance=3cm},
	level 2/.style={sibling distance=1.5cm}]
	\node {\(p_0\)}
	child {node {\(p_1^L\)} child {node {\(p_2^{LL}\)}} child {node {\(p_2^{LR}\)}}}
	child {node {\(p_1^R\)} child {node {\(p_2^{RL}\)}} child {node {\(p_2^{RR}\)}}};
	\end{tikzpicture}
	\caption{Binary branching structure induced by the two tangency points at each level.}
	\label{fig:stadium_ifs}
\end{figure}
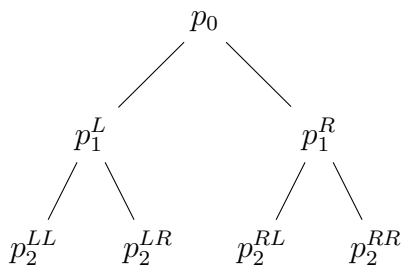

\subsection{Conclusion}

The stadium configuration highlights the coexistence of different dynamical regimes:
\begin{itemize}
	\item quadratic contraction on curved parts (super-exponential);
	\item linear or weaker contraction on flat parts;
	\item limit sets reflecting the interaction of these mechanisms.
\end{itemize}

\begin{remark}[Renormalization interpretation]
	The dynamics can be interpreted as a geometric renormalization process:
	each iteration focuses on smaller neighborhoods of tangency points.
	
	In logarithmic coordinates, the quadratic map corresponds asymptotically
	to an affine transformation with slope \(2\), revealing a universal scaling mechanism.
\end{remark}
% ============================================================
% APPENDIX D: ROUNDED TRIANGLE - PURELY QUADRATIC DYNAMICS
% ============================================================

\section{Appendix B: Detailed analysis of the rounded triangle -- purely quadratic dynamics}

\subsection{ Geometry of the rounded triangle}
\begin{remark}[Smoothing of the rounded triangle]
	\label{rem:triangle-smoothing}
	The rounded triangle has \(C^{1}\) junctions between flat segments and circular arcs,
	and is therefore not of class \(C^{3}\).
	
	However, it can be approximated by strictly convex \(C^{3}\) domains with arbitrarily small perturbations in the \(C^{2}\) topology. The quadratic tangency law depends only on local second-order geometric data at tangency points (curvatures and thickness), which are preserved under such perturbations.
	
	Moreover, the analysis is performed in neighborhoods of tangency points, which can be chosen away from the junctions. Standard stability arguments then ensure that the quadratic dynamics and super-exponential convergence remain unchanged.
	
	Thus, the rounded triangle provides a valid model for the purely quadratic regime.
\end{remark}
Let \(T_k\) be an equilateral triangle of height \(H_k\). Each vertex is rounded by a circular arc of radius \(R_k\). The set \(C_k\) is the corresponding rounded triangle.

Its boundary \(\partial C_k\) consists of:
\begin{itemize}
	\item three straight segments (flat parts);
	\item three circular arcs of radius \(R_k\) (curvature \(\kappa_k = 1/R_k\));
	\item three \(C^1\) junction points.
\end{itemize}

\subsection{ Nested tangency condition}

Assume that the sequence \(\{C_k\}\) is nested (\(C_{k+1} \subset C_k\)) and that the three arcs are pairwise tangent between \(C_k\) and \(C_{k+1}\). This imposes:
\[
R_{k+1} = \lambda R_k, \quad 0 < \lambda < 1,
\qquad
H_{k+1} = H_k - 2(R_k - R_{k+1}).
\]

There are therefore \emph{three tangency points per level}, corresponding to the rounded vertices.

\subsection{ Application of the quadratic tangency law}

\subsubsection{On each arc (positive curvature)}

In a neighborhood of a tangency point, Theorem~\ref{thm:quadratic-tangency} yields, for each branch \(i=1,2,3\),
\begin{equation}
G_k^{(i)}(s) = \alpha_k^{(i)} s^2 + o(s^2),
\label{eq:triangle-quadratic}
\end{equation}
with
\[
\alpha_k^{(i)}
=
\frac{1}{2}\left(\frac{1}{\kappa_k} - \frac{1}{\kappa_{k+1}}\right)
\frac{d_k(p_i)^2}{R_k^{(i)}}.
\]

By symmetry, the three coefficients coincide: \(\alpha_k^{(i)} = \alpha_k\).

\subsubsection{Asymptotic irrelevance of flat parts}

The flat segments do not contribute to the asymptotic fractal structure.
Indeed, away from tangency points, the transition maps retain a non-vanishing linear term,
leading to at most linear contraction.

Since quadratic contraction dominates linear contraction in a sufficiently small neighborhood
of the tangency points, trajectories entering these neighborhoods remain trapped and
converge super-exponentially.

As a consequence, the asymptotic part of the limit set is determined by the dynamics near the tangency points.
In particular, trajectories that enter sufficiently small neighborhoods of these points remain trapped
and converge super-exponentially.

Thus, the local quadratic dynamics near tangency points governs the fine structure of the limit set.

\subsection{ Associated angular dynamics}

Using Lemma~\ref{lem:theta-s-relation} and Theorem~\ref{thm:angular-dynamics}, we obtain
\[
|\theta_k| = 2\kappa_k |s_k| + o(s_k),
\qquad
|\theta_{k+1}| = \beta_k |\theta_k|^2 + o(\theta_k^2),
\]
with
\[
\beta_k
=
\frac{\kappa_{k+1}}{4\kappa_k^2}
\left(\frac{1}{\kappa_k} - \frac{1}{\kappa_{k+1}}\right)
\frac{d_k(p)^2}{R_k}.
\]

Thus, the angular observable also exhibits purely quadratic dynamics.
\subsection{ Asymptotic branching structure}

In logarithmic coordinates
\begin{equation}
u = -\log |s|,
\label{eq:triangle-log}
\end{equation}
the forward dynamics satisfies
\[
u_{k+1} = 2u_k - \log \alpha_k + o(1).
\]

Thus, up to lower-order terms, the dynamics behaves like an affine iteration with slope \(2\).
Equivalently, the inverse dynamics is approximately given by
\[
u \mapsto \frac{u + \log \alpha_k}{2}.
\]

At each level, there are three possible branches corresponding to the three tangency points.
This induces a symbolic dynamics on the space \(\{1,2,3\}^{\mathbb{N}}\).

If the coefficients satisfy
\[
\alpha_k \to \alpha > 0,
\]
then the system becomes asymptotically stationary in logarithmic coordinates
and converges to an affine iterated function system with three branches.
\begin{remark}[Dimension of the limit set]
	The dimension of the limit set associated with this configuration
	is governed by the general result established in Section~\ref{thm:dimension}.
	
	In particular, although the symbolic branching suggests a similarity dimension \(\log_2 3\),
	the geometric Hausdorff dimension satisfies
	\[
	\dim_H \Lambda \leq 1,
	\]
	since the limit set is contained in a one-dimensional structure.
	
	Thus, \(\log_2 3\) should be interpreted as a similarity dimension in logarithmic coordinates,
	rather than as the Hausdorff dimension in the ambient plane.
\end{remark}
\subsection{ Geometric and fractal interpretation}

The rounded triangle produces a branching structure governed by quadratic contraction:
\begin{itemize}
	\item each iteration generates three possible branches;
	\item the contraction mechanism is quadratic in the geometric coordinate;
	\item in logarithmic coordinates, the system behaves asymptotically like a linear contraction of ratio \(1/2\).
\end{itemize}

This leads to a fractal-like limit set concentrated near the tangency points, whose effective
dimension is governed by the symbolic branching structure.

\subsection{ Comparison with the stadium}

\begin{table}[htbp]
	\centering
	\begin{tabular}{|l|c|c|p{3cm}||p{3cm}|}
		\hline
		\textbf{Example} & \textbf{Curved parts} & \textbf{Flat parts} & \textbf{Dynamics} & \(\dim_H \Lambda\) \\
		\hline
		Stadium & 2 arcs & present & mixed (quadratic + linear) & 1 \\
		\hline
		Rounded triangle & 3 arcs & negligible & purely quadratic & similarity dimension \(\log_2 3\) \\
		\hline
	\end{tabular}
	\caption{Comparison of the two examples.}
	\label{tab:triangle-comparison}
\end{table}

\subsection{ Conclusion}

The rounded triangle provides a canonical example of a purely quadratic regime:
\begin{itemize}
	\item three tangency points per level;
	\item quadratic contraction in both position and angle;
	\item symbolic branching with three possible itineraries at each step;
	\item emergence of a fractal-like limit structure.
\end{itemize}

\paragraph{Generalization to \(m\) branches.}
The same mechanism applies to configurations with \(m\) independent tangency branches.

In the asymptotically stationary regime, the dynamics are consistent with the similarity dimension relation
\[
m \cdot 2^{-d} = 1,
\quad \Rightarrow \quad
d = \log_2 m.
\]

However, since the dynamics is supported on a one-dimensional geometric structure,
the Hausdorff dimension satisfies
\[
\dim_H \Lambda \leq 1.
\]

Thus, the value \(\log_2 m\) should be interpreted as an \emph{effective dimension}
associated with the symbolic dynamics in logarithmic coordinates rather than a
geometric Hausdorff dimension in the ambient plane.

\end{document}